\newtheorem{definition}{Definition}
\newtheorem{theorem}{\textbf{Theorem}}
\newtheorem{lemma}{\textbf{Lemma}}
\title{%Effects of prey refuge and harvesting in a three species food chain model incorporating memory effect and prey odour effect\\
Exploring the refuge-induced bubbling phenomenon and harvesting in a three species food chain model that incorporates memory effect and odour effect}
\author{Dipam Das$^{1,2}$,Debasish Bhattacharjee$^1$\\
$^{1}$Department of Mathematics, Gauhati University, Assam, India \\
$^{2}$Department of Mathematics, B B Kishan College, Assam, India\\
%Emails:$^1$pontu.dd@gmail.com,$^{2}$debabh2@gmail.com
}
\date{}
\begin{document}

\maketitle

\begin{abstract}
In this study, an odour-mediated system is developed and studied. In an odor-mediated systems, the sense of smell or odour of species plays a critical role in the interactions between predators and prey. It is widely recognised in scientific literature that these systems are very common and essential across natural ecosystems. These systems are crucial for various behaviors, including foraging, mating, and avoiding predators. In this paper, it is assumed that the presence of prey odour aids the predator in its hunting efforts. It is assumed that both prey and intermediate predators seek refuge against their respective predators upon detecting the odour of their predators. In other words, the odour of predators assists prey species in evaluating the danger and seeking refuge for hiding. This model incorporates the prey species' harvesting as well. We also explore the impact of fading memory on the system by incorporating fractional derivatives into the model. The conditions for both the existence and local stability of the non-negative equilibria are derived. The current model system undergoes both Hopf and transcritical bifurcation when the parameter values are appropriately chosen. The dynamic behaviour of the system is showcased and thoroughly analysed using a range of diagrams, highlighting the impact of prey refuge and predator odour parameters. This paper extensively examines the long-term impacts of harvesting within the system. The extent to which prey odour influences the system is investigated, and it emerges that prey odour can play a significant function within the system. 
%Many special circumstances are also comprehensively addressed, including the following: a) the absence of refuge, b) the absence of harvesting, and c) the absence of the prey odour.
It has been noted that when the refuge for intermediate predators gets bigger, it becomes more challenging for all three populations to coexist within the system. %Interestingly, when prey does not exhibit refuge behaviour, it is still possible for all three species to coexist.
Furthermore, it is apparent that the prey refuge parameter $m_1$ induces bubbling phenomena in the system. 
%In addition, population fluctuations can be observed in all three species in the system when there is no harvesting. 
The presence of prey odour plays a significant role in promoting a long-term cohabitation dynamic within this specific system. It has been observed that when individuals within the system have a strong memory, it positively affects the stability of the system. Numerical simulations are conducted in order to demonstrate and validate the usefulness of the model being considered, therefore supporting the analytical conclusions.

\end{abstract}
\textbf{AMS subject classifications:} 92B05; 92D25; 34D23; 34C25; 34A08.\\\\
\textbf{Keywords:} Predator–prey; memory effect; odour effect, refuge; harvesting; stability; bifurcation

\pagenumbering{arabic}

\section{Introduction} 
In ecology, odour-mediated predator-prey systems can be distinguished by the pivotal function of the odour in the interactions between predators and prey. The odour of a species is one of the key elements that significantly affects the functional response in a predator-prey system. The olfactory sense plays a vital part in essential biological activities such as predator evasion, mate selection, trail and territory identification, and food acquisition \cite{dar}. It has been noticed that a considerable proportion of animal species heavily depend on olfaction as a means to capture and analyse environmental information. Marsupials (\textit{Marsupialia}) have been observed utilising feather odour signals of crimson rosella (\textit{Platycercus elegans}) as a means to evaluate the condition of nest hollows. This knowledge may assist them in detecting avian prey or enhancing their vigilance when competing with parrots (\textit{Psittaciformes}) for nest hollows \cite{miha}. Insects possess the ability to detect plants by olfactory cues. The olfactory cues emitted by plants are also significant in the process of identifying and choosing food sources among mammals\cite{pad}. The parasitoid species \textit{Microplitis croceipes} relies on kairomones emitted by the host organism for the purpose of locating it \cite{vet}. Female moths (\textit{Lepidoptera}) have been observed to employ a fascinating mechanism involving the release of odorant compounds. These compounds, which are known to contain specialised enzymes, play a crucial role in the transmission of vital information among individuals of the same species \cite{xu}. Also, the use of olfactory cues by wolves (\textit{Canis lupus}) in the pursuit of prey is a well-documented phenomenon \cite{wu}. The odours secreted by prey aid the grasshopper mouse \textit{(Onychomys leucogaster}) in its predation efforts \cite{lang}. Although the impact of species' odour is widely recognised in ecological literature, there is a limited amount of research that explores its effect in mathematical models of predator-prey systems. Xu et al. \cite{xu} studied the effect of predator odour on prey species in a predator-prey system. Shen et al. \cite{shen} investigated a predator-prey model in the presence of predator odour disturbance. Bhattacharjee et al.\cite{bhat} discussed a three species food chain model incorporating the effect of prey odour. Das et al. \cite{dipam} explored how a species' scent can exert both harmful and advantageous influences on other species within a predator-prey interaction model.

Odour-mediated systems exhibit significant diversity and are present in various ecological interactions. There are numerous diverse effects that can occur in an odor-mediated system. It is found that the odour of predators can elicit defence mechanisms or anti-predator behaviours in prey species, which is widely recognised in ecological literature \cite{kro,jach,apfe,ylo,sax,miya,chi}. One such defensive behaviour frequently observed in nature is the refuge phenomenon. Therefore, the existence of prey refuge in an odor-mediated system is a prevalent kind of diversification. The notion of refuges is extensively acknowledged and examined within the disciplines of biology and ecology \cite{saha,sk,guin,pan,das}. The term "refuge-seeking behaviour" pertains to a phenomenon observed in organisms whereby they actively place themselves in areas that are either secluded or inaccessible to predators as a means of seeking protection from predation. This particular method enables the organisms to efficiently avoid predation and improve their likelihood of survival. The phenomenon of prey hiding has been observed to potentially exert a stabilising effect on predator-prey dynamics \cite{das,berry}.  However, as far as the authors are aware, no study has been undertaken that explores the connection between a prey's refuge behaviour and the odour of its predator.
%The concept of refuge encompasses a wide range of strategies aimed at reducing the risk of predation. These strategies can include spatial or temporal refuges, prey aggregations, or a decrease in search activity by prey. 

 The prevalence of harvesting is an intriguing diversity among these odour-mediated systems. For instance, hunters utilise various scents, like doe urine, to effectively attract bucks and increase their chances of a successful hunt \cite{smith,moore}. Trappers use baits with potent scents, such as fish or honey, to entice bears into traps \cite{mas,schl}, etc.. Throughout the course of human history, the practice of harvesting different kinds of animals and insects, whether through fishing, hunting, or resource extraction, has been a vital and indispensable endeavour. Nevertheless, the expansion of human populations, developments in technology, and the increasing worldwide need for food and resources have resulted in heightened harvesting practices that have the potential to exert pressure on ecological systems and pose a threat to biodiversity \cite{pauly}. Intensive harvesting can harm populations, destabilise ecosystems, and lead to species extinction \cite{hut}. Thus, it makes sense to investigate the effect of harvesting in an odour-mediated predator prey system.

%Throughout the course of human history, the practice of harvesting different kinds of animals and insects, whether through fishing, hunting, or resource extraction, has been a vital and indispensable endeavour. It encompasses the intentional capture, eradication, or gathering of organisms for many objectives, such as sustenance, economic profit, cultural customs, and scientific investigation \cite{worm}. The policy of the harvesting of animals and insects has attracted considerable attention within the realm of population dynamics and remains a subject of continuous concern. The utilisation of traditional hunting and fishing methods frequently exhibited sustainability since people depended on indigenous knowledge and seasonal variations to govern their agricultural yields \cite{berk,berk2}. Nevertheless, the expansion of human populations, developments in technology, and the increasing worldwide need for food and resources have resulted in heightened harvesting practices that have the potential to exert pressure on ecological systems and pose a threat to biodiversity \cite{pauly}. Intensive harvesting can harm populations, destabilise ecosystems, and lead to species extinction \cite{hut}. On the other hand, prey refuges provide safe havens for prey populations to recuperate, breed, and thrive, contributing significantly to ecosystem sustainability and balance \cite{mum}.

Despite the presence of varied odour-mediated systems in ecological settings, there has been scant research conducted in the field of mathematical modelling considering the influence of odour on predator-prey systems. This study aims to examine the impact of prey odour on predators as well as the influence of predator odour on prey in a food chain that includes harvesting. To the best of the authors' knowledge, there is currently no mathematical model in the literature that can accomplish the same objective.
%Consequently, carrying out a research on a odor-mediated predator-prey system, without the presence of refuge or harvesting, seems to be an insufficient investigation. Thus, it is imperative to analyse the impacts of both prey refuge and harvesting in an odor-mediated predator-prey system. To the best of the authors' knowledge, there is currently no existing mathematical model in the literature that has examined a diversified odour-mediated predator-prey system that includes refuge and harvesting.

There is a general consensus \cite{djo,dok,podl} that living organisms possess memory that is connected to their physical structure or mental processes . The role of species memories in shaping ecological systems is of great importance.  Thus, it is essential for a predator-prey mathematical model to consider this memory effect in order to enhance its relevance. Fractional order differential equations (FDEs) have additional benefits over integer order differential equations (ODEs) when it comes to modelling these processes. Fractional differential equations (FDEs) are operators that exhibit non-local behaviour, indicating that the future state of a function is influenced not exclusively by its current state but also by all of its prior states. On the other hand, integer order differential equations (ODEs) are limited to capturing only particular changes or features at a specific step of the process \cite{ans,kha}. Thus, fractional calculus has emerged as a crucial field of study for comprehending real-world problems throughout the years \cite{das,ans,kha,li,paul}. The integration of fractional calculus has had a significant impact on intricate dynamic systems, leading to remarkable advancements in the field of ecology. Multiple fractional derivatives have been thoroughly discussed in the literature on fractional calculus, one of which is Caputo's derivative. Given the limitations of the integer-order derivative (ODE) in collecting full memory and fully portraying the physical behaviour of the model, we will also examine our biological system by employing a fractional derivative in this paper.

Based on the aforementioned discussion and literature review, we developed an odour-mediated food chain model comprising three species in this work. Predators are presumed to derive advantages from the odour of their prey species. Both prey and intermediate predators are believed to profit from the presence of predator odour, as they utilise refuge-seeking as a strategic reaction to predator odour in order to reduce predation rates. The act of harvesting of prey is also considered. We will analyse the same model from two distinct ecological perspectives:\\
(1) Utilising fractional order derivatives to integrate the memory effect into the system.\\
(2) Utilising integer order derivatives that indicate the absence of memory in the system.\\

The paper is structured in the following manner: The paper's primary contributions to the scientific literature are covered in Section (\ref{rc}). In Section (\ref{mf}), the problem is mathematically modelled. The Section (\ref{p}) provides a comprehensive discussion on all the necessary prerequisites for the Caputo fractional order derivative. The concept of well-posedness, including properties such as positivity and boundedness, is addressed in Section (\ref{wp}). The paper discusses the existence and local stability of all ecologically feasible equilibrium points in Section (\ref{es}). The occurrence of different bifurcations is explored in Section (\ref{hb}). The analytical findings are validated using numerical simulations in Section (\ref{ns}). Conclusions are drawn in Section (\ref{con}). Furthermore, the visual representation of the paper's structure may be seen in figure (\ref{resmethod}).

\section{Research contribution} \label{rc} Based on the aforementioned literature review and subsequent study, it becomes evident that in an odour-mediated predator-prey system, the species' odour has a direct influence on the population dynamics of the system, and it is a prevalent phenomenon in predator-prey interactions. These odour-mediated systems exhibit significant diversity and are present in various ecological interactions. Even though a substantial amount of research has been conducted in the field of mathematical modelling of a predator-prey system, there are still substantial voids in the field that need to be addressed. The following outline highlights some of these gaps:
%It is a highly common occurrence in predator-prey interactions. Another prevalent occurrence in predator-prey interactions is the phenomena of seeking refuge. Prey frequently seek refuge in response to predator odour as a means of evading predation. Therefore, it is imperative to take into account these phenomena within the framework of a predator-prey model. One of the most fundamental aspects of human history and society for millennia has been the act of harvesting livestock. It encompasses the deliberate capture or eradication of untamed creatures for diverse objectives, such as sustenance, attire, habitation, cultural customs, and financial profit. However, numerous species and ecosystems worldwide face significant threats due to unsustainable hunting practices, habitat loss, poaching, and illegal wildlife trade. Thus, it becomes essential to analyse the impact of harvesting in a predator-prey model. 
%Although the scientific literature has explored the field of predator-prey systems to some extent, there are significant gaps in research that need to be addressed. Some of these gaps are outlined below:\\

1) Odor-mediated predator-prey dynamics are common in nature and have been thoroughly examined in ecological research.  However, there exists a significant paucity of research concerning odor-driven predator-prey systems within the domain of mathematical modeling. To the best of the authors' knowledge, only a limited number of studies \cite{xu,shen,bhat,dipam} have explored the impact of odour in the mathematical modeling of predator-prey systems.

2) In the paper \cite{xu}, the authors examined the impact of predator odour on prey, identifying it as a detrimental factor affecting the prey's growth rate.  In the work \cite{shen}, the researchers examined the impact of predator odour disturbance.  In the study \cite{bhat}, the investigators examined the beneficial impact of prey odour on predators.  In the research work \cite{dipam}, the authors examined the predator odour's beneficial impact on prey and detrimental effect on its competitors.  It follows that a system wherein prey odour influences its predators while concurrently predator odour impacts the prey is not addressed within a mathematical model of predator-prey dynamics.
%(2) Although the ecological literature broadly recognises the influence of prey odour on predator species as well as the influence of predator odour on prey, the investigation of these effects within a mathematical model of predator-prey dynamics has been fairly limited. As far as the authors are aware, no mathematical model currently exists that examines an odour-mediated predator-prey system in which the odour of both species influences each other.

3) To the best of the authors knowledge, no research has been conducted in the realm of mathematical modelling of a predator-prey system that incorporates the concept of predator-odour induced refuge in prey.

4) Although it is well acknowledged in existing literature \cite{saiv,chu,czac,rein} that a relationship exists between the odour of a species and the memory of its predators or prey, there has been a lack of exploration into an odour-mediated predator-prey system in relation to the memory effect within the context of mathematical modelling of population dynamics.

%To the best of the author's knowledge, no mathematical model that addresses the impact of prey odour as well as predator odour in the presence of the memory effect is currently available in the scientific literature.\\
%(3) There is a noticeable dearth of research in the realm of mathematical modelling of odor-driven predator-prey systems. As far as the authors are aware, there is currently no mathematical model in the scientific literature that explores the collective effects of prey odour, predator odour-induced refuge, and harvesting on predator-prey dynamics in the presence of a memory effect.

To address these research gaps in the existing literature, an odour-mediated three species food chain model is proposed that examines the dynamics of a predator-prey system in the presence of prey odour, predator odour-induced prey refuge, and harvesting. It is proposed that the odour of prey aids predators in their predation efforts, but prey also get benefited from the odour of their predators since they instinctively seek refuge upon detecting their predator's odour. In other words, the predator odour directly aids in prey refuge. The harvesting of the prey is also taken into account. The model aims to provide insights into the population dynamics of an odour-mediated predator-prey system. In this paper, we will explore the same model using both an ODE (system of Ordinary differential equations) framework and an FDE (system of Fractional differential equations) framework. The ODE framework of the system offers insights into the system in the absence of memory effect, whereas the FDE framework provides insights into the system in the presence of memory effect. 
%An in-depth analysis of the model's local stability and bifurcations is conducted through both analytical and numerical methods.
\section{Model formulation} \label{mf} This paper aims to investigate an odour-mediated predator-prey system, where organisms communicate and interact through odour signals, a common and essential feature across natural ecosystems. In these systems, an important type of diversity is the use of predator odour signals by prey to seek refuge and evade predation. Within the animal kingdom, the phenomenon of seeking refuge as a prey species in response to the odour of the predator is a commonly observed occurrence to avoid predation. Thus, it is crucial to incorporate prey refuge into the mathematical modelling of an odour-mediated predator-prey system in order to enhance its realism. Another notable diversification within an odour-mediated predator-prey system is the use of harvesting practices. It is observed that odour of a species can be used for efficient and effective resource extraction from the environment. Thus, when examining an odour mediated predator prey system, it is logical to incorporate the concept of harvesting.

Thus, in this paper, our objective is to analyse the influence of odour on a prey harvested three-species food chain model, specifically considering the effects of prey odour on predators and predator odour on prey.
%This research endeavours to comprehend the ways in which the existence of a refuge for a species influences the functioning of the food chain system.By integrating the impact of odour, we intend to offer an all-encompassing examination of the intricate dynamics inherent in the model. 
Through this investigation, valuable insights into the ecological dynamics of refuge and odour effect in a prey harvested food chain can be gained. Building on the preceding discussion, we propose the subsequent model:
%Inspired by the research undertaken by Bhattacharjee et al. \cite{bhat}, we put forward the following model:
\begin{equation} \label{Firsteq}
       \begin{split}
           &\frac{d x_1}{dt}=  r_1 x_1 (1-x_1) -r_2 (1-m_{fp} a_1) (1+ \beta x_1) x_1 x_2- qr x_1 \\
           &\frac{d x_2}{dt}=r_3 r_2 (1-m_{fp} a_1)(1+ \beta x_1) x_1 x_2 - \frac{r_4 (1-m_{sp} a_2) x_2 x_3}{(1 + b x_2)} - d_1 x_2 \\
           &\frac{d x_3}{dt}=\frac{r_5 r_4 (1-m_{sp} a_2) x_2 x_3}{(1 + b x_2)} - d_2 x_3
       \end{split}
   \end{equation}
with initial conditions: \\
$$x_1(0)= x_1^0>0 ,x_2(0) = x_2^0 >0 ,x_3(0)=x_3^0>0$$.\\
For simplicity, taking $m_1=m_{fp} a_1$ and $m_2=m_{sp} a_2$, the system (\ref{Firsteq}) becomes

\begin{equation} \label{Final ode eq}
       \begin{split}
           &\frac{d x_1}{dt}=  r_1 x_1 (1-x_1) -r_2 (1-m_1) (1+ \beta x_1) x_1 x_2- qr x_1 \\
           &\frac{d x_2}{dt}=r_3 r_2 (1-m_{1})(1+ \beta x_1) x_1 x_2 - \frac{r_4 (1-m_2) x_2 x_3}{(1 + b x_2)} - d_1 x_2 \\
           &\frac{d x_3}{dt}=\frac{r_5 r_4 (1-m_2) x_2 x_3}{(1 + b x_2)} - d_2 x_3
       \end{split}
   \end{equation}
with initial conditions: \\
$$x_1(0)= x_1^0>0 ,x_2(0) = x_2^0 >0 ,x_3(0)=x_3^0>0$$.

%In this article, the system (\ref{1st system}) is enhanced with the inclusion of refuge in both prey species and intermediate predator species. The harvesting of prey species is also incorporated into the system (\ref{1st system}) to make it more relevant. After considering all of these assumptions, the model (\ref{1st system}) takes the following form-
%In this system (\ref{Final ode eq}), it is considered that odour of prey benefits predators in their predation efforts. It is assumed that prey and intermediate predator tends to seek refuge when they detect the odour of their predators. The harvesting of prey species is also incorporated into the system.
 
The system under consideration, as described in equation (\ref{Final ode eq}), involves a food chain consisting of three species. The variables $x_1$, $x_2$, and $x_3$ represent the population densities of the prey, intermediate predator, and top predator, respectively. It is considered that the top predator (of density $x_3$) preys upon the intermediate predator (of density $x_2$), while the intermediate predator, in turn, hunts upon prey (of density $x_1$). Logistic growth is considered for prey, with the parameter $r_1$ denoting the intrinsic growth rate. It is considered that prey odour helps intermediate predators in hunting
%, as discussed in \cite{bhat}
and $\beta$ is the coefficient of odour effect produced by a single prey. The harvesting of the prey is also considered. The variable $q$ represents the catchability constant, while the parameter $r$ reflects the level of harvesting effort. The variable $r_2$ represents the feeding rate at which the intermediate predator consumes its prey, whereas $r_4$ represents the feeding rate at which the top predator consumes the intermediate predator. The conversion rates for the intermediate predator and top predator are denoted as $r_3$ and $r_5$ respectively. Prey and intermediate predators are considered to instinctively seek refuge when they detect the odour of their predators and it is hypothesised that the amount of predator odour is directly proportional to the number of prey individuals seeking refuge. Here, $m_1=m_{fp} a_1$ and $m_2=m_{sp} a_2$ represent the quantity of prey and intermediate predators that take refuge when they detect the odour of their predators, rendering themselves unreachable. The prey's awareness of potential threats is presumed to rise with the heightened intensity of predator odours. This results in competition to locate refuges. Consequently, as the amount of predator odour escalates, the likelihood of prey obtaining refuge diminishes. The values $m_{fp}$ and $m_{sp}$ denote the probability of prey and intermediate predator obtaining refuge in the presence of predator odour. Thus, $0<m_{fp},m_{sp}<1$. The parameter $a_1$ represents the coefficient of intensity of the intermediate predator's odour. On the other hand, the value $a_2$ represents the coefficient of intensity of the top predator's odour. Here, it is assumed that $0<m_1,m_2<1$. The variables $d_1$ and $d_2$ represent the natural death rates of the intermediate predator and top predator, respectively. Figure (\ref{redfoxgmousegrassflowchrt}) provides a visual representation of an ecological system that closely resembles the system to be investigated.
\begin{figure}[H]
         \centering
         \includegraphics[width=200mm]{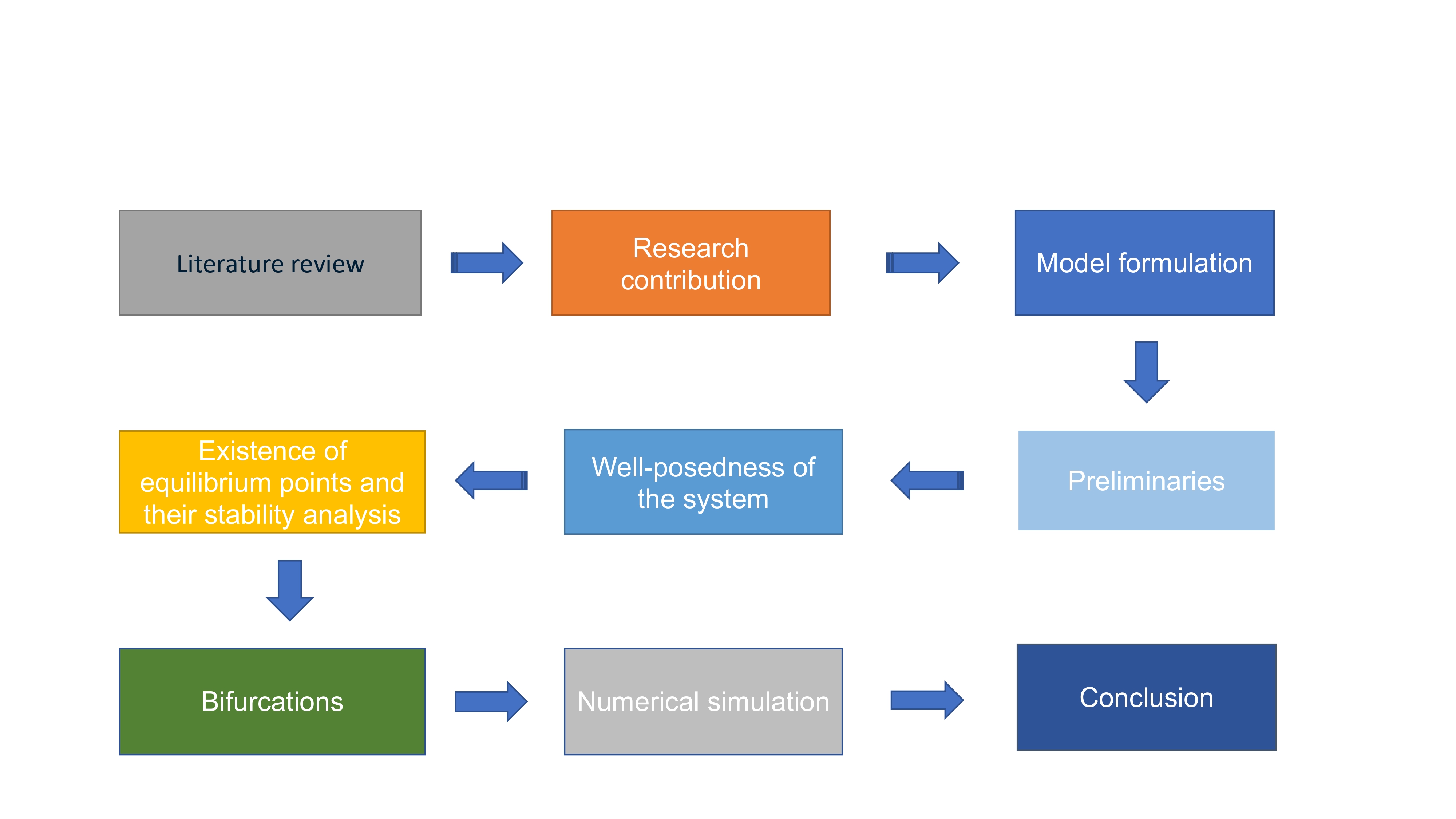}
         \caption{\emph{A flow chart illustrating the research methodology employed for the model under investigation in this scholarly article.}}
         \label{resmethod}
     \end{figure}
In order to integrate the memory effect into the system (\ref{Final ode eq}), we have introduced a Caputo-type fractional-order derivative with an order of $\alpha$ (where, $0<\alpha\le 1$) into the model system. This modification results in the following revised model:

 \begin{equation} \label{Frac eq}
       \begin{split}
         &^{c}D_{t}^{\alpha}(x_1(t))= r_1 x_1 (1-x_1) -r_2 (1-m_1) (1+ \beta x_1) x_1 x_2- qr x_1 \\
           &^{c}D_{t}^{\alpha}(x_2(t))=r_3 r_2 (1-m_1)(1+ \beta x_1) x_1 x_2 - \frac{r_4 (1-m_2) x_2 x_3}{(1 + b x_2)} - d_1 x_2 \\
           &^{c}D_{t}^{\alpha}(x_3(t))=\frac{r_5 r_4 (1-m_2) x_2 x_3}{(1 + b x_2)} - d_2 x_3
       \end{split}
   \end{equation}
   with initial conditions: $x_1(0)= x_{1}^{0}>0$ , $x_2(0) = x_{2}^{0} >0$ , $x_3(0)=x_{3}^{0}>0$. 
   %An ecological system that closely resembles our model system is visually illustrated in Figure (\ref{redfoxgmousegrassflowchrt}).
In this model system (\ref{Frac eq}), when the value of $\alpha=1$, then it system becomes equivalent to the system (\ref{Final ode eq}). 
   
\section{Preliminaries}\label{p} In this section, we provide a comprehensive review of various definitions, theorems, and lemmas that are pertinent to the Caputo fractional derivative.
\begin{figure}[H]
         \centering
         \includegraphics[width=190mm]{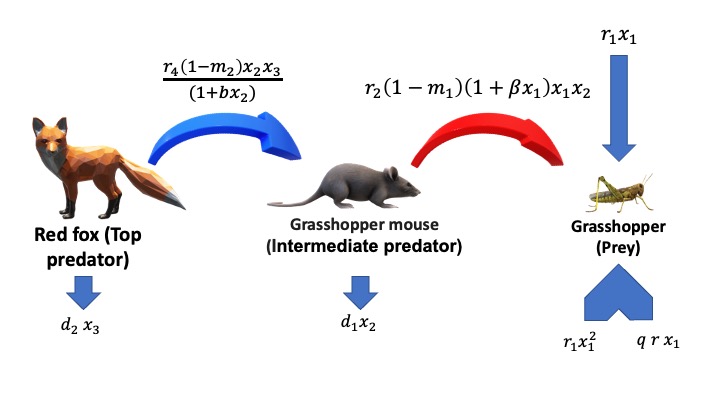}
         \caption{\emph{The graphical representation showcases the intricate relationships among the prey (grasshopper), intermediate predator (grasshopper mouse), and top predator (red fox).}}
         \label{redfoxgmousegrassflowchrt}
     \end{figure}
\begin{definition} \label{cf} \cite{pet}
    The Caputo fractional derivative with order $\alpha  \ge 0$ for the continuous function $f(t) \in C^{n} ([ t_0 , + \infty ) , R)$ is defined as
     $$^{c}_{t_0}D_{t}^{\alpha}(v(t)) = \frac{1}{\Gamma(n-\alpha)} \int_{t_{0}}^{t} \frac{f^{n}(r)}{(t-r)^{\alpha-n+1}} \, dr, $$

where $\Gamma(*)$ is the Gamma function and $t_{0}\le t$. Here, the symbol $\alpha$ represents the order of the fractional derivative and  $\alpha \in (n-1,n), n \in \mathbf{N}$ . For $\alpha \in (0,1)$ and considering the specific case of n = 1,
  $$^{c}_{t_0}D_{t}^{\alpha}(v(t)) = \frac{1}{\Gamma(1-\alpha)} \int_{t_0}^{t} \frac{f'(r)}{(t-r)^{\alpha}} \, dr. $$
\end{definition}

\begin{definition} \cite{li}
    A point $x_{0}$ is said to be an equilibrium point of the following Caputo fractional order dynamical system $^{c}_{t_0}D_{t}^{\alpha}(x(t))=\Pi(x), x(t_{0})>0$  iff $\Pi(x_{0})=0$.
\end{definition}

\begin{lemma}\cite{kilb} \label{Ml}
    Let us assume, f(t) be a continuous function on (0,T] and satisfies $^{c}_{t_0}D_{t}^{\alpha}(f(t))\le - b_1 f(t)+ b_2$, $f(0)=f_{0}>0$, $0<\alpha<1$, where $b_1 \ne 0$ and $b_1,b_2\in \mathbb{R}$. Then $$f(t)\le \left(f_{0}-\frac{b_2}{b_1} \right)E_{\alpha}(-b_{1}t^{\alpha})+\frac{b_2}{b_1}.$$
Where, $E_{\alpha}$ is the Mittag–Leffler function.
\end{lemma}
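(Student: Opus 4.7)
The plan is to treat the differential inequality as a perturbation of the associated linear fractional equation and use the explicit Mittag-Leffler representation of its solution. Define an auxiliary slack function $\phi(t) \ge 0$ by writing the hypothesis as the equality $^{c}D_{t}^{\alpha} f(t) + b_{1} f(t) = b_{2} - \phi(t)$. The problem then reduces to solving a linear inhomogeneous fractional initial-value problem with a nonnegative sink term.

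First, I would apply the Laplace transform, using $\mathcal{L}\{^{c}D_{t}^{\alpha} f\}(s) = s^{\alpha} F(s) - s^{\alpha - 1} f_{0}$ and the familiar identity $\mathcal{L}\{t^{\alpha-1} E_{\alpha,\alpha}(-b_{1} t^{\alpha})\}(s) = (s^{\alpha} + b_{1})^{-1}$. Inverting yields the representation
\begin{equation*}
f(t) = f_{0} E_{\alpha}(-b_{1} t^{\alpha}) + b_{2}\!\int_{0}^{t}\!(t-s)^{\alpha-1} E_{\alpha,\alpha}(-b_{1}(t-s)^{\alpha})\, ds - \int_{0}^{t}\!(t-s)^{\alpha-1} E_{\alpha,\alpha}(-b_{1}(t-s)^{\alpha})\, \phi(s)\, ds.
\end{equation*}
Next, I would invoke the standard identity $\int_{0}^{t} (t-s)^{\alpha-1} E_{\alpha,\alpha}(-b_{1}(t-s)^{\alpha})\, ds = b_{1}^{-1}\bigl(1 - E_{\alpha}(-b_{1} t^{\alpha})\bigr)$ (obtained by termwise integration of the Mittag-Leffler series, or by Laplace inversion of $1/[s(s^{\alpha}+b_{1})]$), which lets the constant forcing $b_{2}$ be absorbed into closed form.

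Finally, I would discard the third integral using positivity: for $0 < \alpha < 1$ and $b_{1} > 0$, Pollard's theorem guarantees that $E_{\alpha,\alpha}(-b_{1}\tau^{\alpha}) \ge 0$ for all $\tau \ge 0$, hence the last integrand is nonnegative and its subtraction only strengthens the inequality. Combining and regrouping produces
\begin{equation*}
f(t) \le f_{0} E_{\alpha}(-b_{1} t^{\alpha}) + \frac{b_{2}}{b_{1}}\bigl(1 - E_{\alpha}(-b_{1} t^{\alpha})\bigr) = \left(f_{0} - \frac{b_{2}}{b_{1}}\right) E_{\alpha}(-b_{1} t^{\alpha}) + \frac{b_{2}}{b_{1}},
\end{equation*}
which is precisely the claim.

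The principal technical obstacle is the sign control of $E_{\alpha,\alpha}(-b_{1} t^{\alpha})$; without the complete-monotonicity result of Pollard the proof collapses, since the sign of the residual integral would be unknown. A secondary subtlety is justifying the Laplace transform manipulations when $f$ is only assumed continuous on $(0,T]$ — one way to sidestep this is to replace the Laplace argument with a direct fractional comparison principle, showing that the solution $y(t)$ of the associated equality problem dominates $f(t)$ pointwise via a maximum-principle argument on $y - f$, which satisfies $^{c}D_{t}^{\alpha}(y - f) + b_{1}(y - f) \ge 0$ with nonnegative initial value.
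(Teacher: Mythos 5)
The paper does not prove this lemma at all: it is imported verbatim from the cited reference (Kilbas, Srivastava and Trujillo) and used as a black box in the boundedness theorem, so there is no in-paper argument to compare yours against. On its own merits, your proof is the standard and correct one for this comparison result: introducing the nonnegative slack $\phi$, solving the resulting linear inhomogeneous Caputo problem by Laplace transform to get the variation-of-constants formula with kernel $t^{\alpha-1}E_{\alpha,\alpha}(-b_1 t^{\alpha})$, absorbing the constant forcing via $\int_0^t \tau^{\alpha-1}E_{\alpha,\alpha}(-b_1\tau^{\alpha})\,d\tau = b_1^{-1}\bigl(1-E_{\alpha}(-b_1 t^{\alpha})\bigr)$, and discarding the $\phi$-convolution by Pollard's complete-monotonicity result. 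Two small remarks. First, the lemma as stated only assumes $b_1\neq 0$; when $b_1<0$ the argument of $E_{\alpha,\alpha}$ is nonnegative and positivity of the kernel is immediate from the series, so Pollard is only needed for $b_1>0$ — worth saying explicitly since you frame Pollard as indispensable. Second, you correctly flag that continuity of $f$ on $(0,T]$ alone does not license the Laplace manipulations (one needs ${}^{c}D^{\alpha}f$ to exist and $\phi$ to be locally integrable); your fallback via a fractional comparison principle applied to $y-f$ is the cleaner way to close that gap, and is in fact how several references state and prove this lemma. With those caveats addressed, the argument is complete.
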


\begin{lemma}\cite{kli}\label{uniquelemma}
    We consider a system denoted as $^{c}_{t_0}D_{t}^{\alpha}(H(t))=\phi(t,H)$, where $t_{0}>0$ and $H(t_{0})=H_{t_{0}}$ is the initial condition. The parameter $\alpha$ belongs to the interval $(0,1]$, and the function $\phi: [t_{0}, \infty) \times \upsilon \to \mathbf{R}^{n\times n}$, where $\upsilon$ is a subset of $\mathbf{R}^{n}$. Now, when it satisfies the local Lipschitz condition with respect to $H\in \mathbf{R}^{n}$ \\
    $$||\phi(t,H_1)-\phi(t,H_2)||\le K ||H_{1}-H_{2}||$$
   then, the presence of a unique solution is confirmed on the interval $[t_{0}, \infty) \times \upsilon \to \mathbf{R}^{n\times n}$, where
$$||H_1(x_1,x_2,...,x_n)-H_2(y_1,y_2,...,y_n)||= \sum_{j=1}^{n} |x_{i}-y_{i}|, x_{i},y_{i}\in \mathbf{R}.$$
\end{lemma}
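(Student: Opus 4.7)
The plan is to convert the Caputo fractional IVP into an equivalent Volterra-type integral equation and then apply Banach's fixed point theorem. First, I would apply the Riemann--Liouville fractional integral operator $I^{\alpha}_{t_0}$ to both sides of $^{c}_{t_0}D_{t}^{\alpha}H(t)=\phi(t,H)$. Using the standard identity $I^{\alpha}_{t_0}{}\,{}^{c}_{t_0}D_{t}^{\alpha}H(t)=H(t)-H(t_{0})$ valid for $\alpha\in(0,1]$ and $H$ absolutely continuous, the IVP becomes equivalent to the integral equation
$$H(t)=H_{t_{0}}+\frac{1}{\Gamma(\alpha)}\int_{t_{0}}^{t}(t-s)^{\alpha-1}\phi(s,H(s))\,ds.$$
Solutions of the original system are then precisely the fixed points of the operator $T$ defined by the right-hand side on a suitable Banach space of continuous functions.

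Next, I would fix $h>0$ and consider the space $X=C\bigl([t_{0},t_{0}+h],\upsilon\bigr)$ equipped with the supremum norm, and define $T\colon X\to X$ by
$$(TH)(t)=H_{t_{0}}+\frac{1}{\Gamma(\alpha)}\int_{t_{0}}^{t}(t-s)^{\alpha-1}\phi(s,H(s))\,ds.$$
Using the Lipschitz hypothesis, I would estimate
$$\|(TH_{1})(t)-(TH_{2})(t)\|\le\frac{K}{\Gamma(\alpha)}\int_{t_{0}}^{t}(t-s)^{\alpha-1}\|H_{1}(s)-H_{2}(s)\|\,ds\le\frac{Kh^{\alpha}}{\Gamma(\alpha+1)}\|H_{1}-H_{2}\|_{\infty}.$$
Choosing $h$ small enough so that $Kh^{\alpha}/\Gamma(\alpha+1)<1$ makes $T$ a strict contraction, and Banach's fixed-point theorem yields a unique local solution on $[t_{0},t_{0}+h]$.

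To pass from a local to a global unique solution on $[t_{0},\infty)$, I would employ a continuation argument: starting from the endpoint of the previous interval of existence and treating the tail of the memory integral as a known forcing term, the same contraction argument produces a unique extension on a further interval of length $h$. The key observation is that the length $h$ depends only on $K$ and $\alpha$, not on the initial state, so the procedure can be iterated indefinitely and a fractional-type Gronwall inequality (using the Mittag--Leffler function, as in Lemma \ref{Ml}) rules out finite-time blow-up.

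The main obstacle is the weakly singular convolution kernel $(t-s)^{\alpha-1}$, which prevents a direct application of the classical Picard--Lindelöf scheme; careful handling of the singularity (either by shrinking the interval as above, or equivalently by introducing a Bielecki-type weighted norm $\|H\|_{\lambda}=\sup_{t}e^{-\lambda(t-t_{0})}\|H(t)\|$ with $\lambda$ chosen large enough to absorb the kernel) is what makes the contraction work. Once this technical point is resolved, patching together uniqueness across consecutive intervals gives the global result asserted in the lemma.
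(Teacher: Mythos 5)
The paper offers no proof of this lemma to compare against: it is quoted as a preliminary and attributed to \cite{kli}, so your argument has to be judged on its own merits. On those merits it is the standard and essentially correct route. The reduction to the Volterra equation via $I^{\alpha}_{t_0}\,{}^{c}_{t_0}D_{t}^{\alpha}H = H - H_{t_0}$ is valid for $\alpha\in(0,1]$, and your contraction estimate is right: $\int_{t_0}^{t}(t-s)^{\alpha-1}\,ds=(t-t_0)^{\alpha}/\alpha$ gives exactly the factor $Kh^{\alpha}/\Gamma(\alpha+1)$, so a sufficiently small $h$ (or a Bielecki weight) yields a unique local fixed point. You also correctly flag the one place where the fractional case genuinely differs from Picard--Lindel\"of: when continuing past $t_0+h$ the extended function must satisfy the memory integral anchored at $t_0$, not a fresh initial-value problem at the new left endpoint, and treating the already-determined tail as a known forcing term is the standard fix.

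Two small points deserve attention. First, the lemma as stated says ``local Lipschitz'' but then writes a single constant $K$ uniformly over $\upsilon$; your global continuation (with $h$ depending only on $K$ and $\alpha$, plus a Mittag--Leffler--type Gronwall bound to exclude blow-up) is only as strong as that uniformity. If the condition were genuinely local, existence on all of $[t_0,\infty)$ would additionally require an a priori bound confining the solution to a compact set --- which, in the application at hand, the paper supplies separately through its boundedness theorem, so nothing is lost. Second, your appeal to the fractional Gronwall inequality to rule out finite-time blow-up implicitly uses the linear growth bound $\|\phi(t,H)\|\le\|\phi(t,H_{t_0})\|+K\|H-H_{t_0}\|$ that follows from the Lipschitz inequality; it would be worth stating this explicitly, but it is not a gap. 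Overall the proposal is sound and is the argument one would expect behind the cited result.
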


\begin{lemma} \cite{odib}\label{nd}
    Let us suppose that $w(t), ^{c}_{t_0}D_{t}^{\alpha}(w(t)) \in C[a,b]$ and $0 < \alpha \le 1$, then\\
    (\romannum{1}) w(t) is a non-increasing function, $\forall t \in [a,b]$, provided $^{c}_{t_0}D_{t}^{\alpha}(w(t))\le 0$.\\
    (\romannum{2}) w(t) is a non-decreasing function, $\forall t \in [a,b]$, provided $ ^{c}_{t_0}D_{t}^{\alpha}(w(t))\ge 0$.
\end{lemma}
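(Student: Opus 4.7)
Part (ii) follows from (i) applied to $-w$ (whose Caputo derivative is $-\,{}^{c}_{t_{0}}D_{t}^{\alpha}w$ by linearity of the convolution in Definition~\ref{cf}), so I focus on (i). The entire argument is anchored on the Generalised Mean Value Theorem of Odibat and Shawagfeh for Caputo fractional derivatives: for $w,{}^{c}_{t_{0}}D_{t}^{\alpha}w\in C[a,b]$ with $0<\alpha\le 1$, and for each $t\in (a,b]$, there exists $\xi\in(a,t)$ such that
\[
w(t) \;=\; w(a) \;+\; \frac{(t-a)^{\alpha}}{\Gamma(\alpha+1)}\,\,{}^{c}_{t_{0}}D_{t}^{\alpha}w(\xi).
\]
Granted this, the hypothesis ${}^{c}_{t_{0}}D_{t}^{\alpha}w\le 0$ on $[a,b]$ instantly forces $w(t)\le w(a)$ for every $t\in[a,b]$, delivering the first half of the monotonicity conclusion.

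The engine driving the GMVT is the fractional fundamental theorem
\[
w(t)-w(a)\;=\;\frac{1}{\Gamma(\alpha)}\int_{a}^{t}(t-s)^{\alpha-1}\,{}^{c}_{t_{0}}D_{s}^{\alpha}w(s)\,ds,
\]
which I would verify by substituting the Caputo representation from Definition~\ref{cf}, exchanging the order of integration, and evaluating the resulting Beta-type kernel. Because the weight $(t-s)^{\alpha-1}/\Gamma(\alpha)$ is positive on $(a,t)$ and ${}^{c}_{t_{0}}D_{t}^{\alpha}w$ is continuous on $[a,b]$ by hypothesis, the classical weighted mean value theorem for Riemann integrals then produces the intermediate point $\xi$ appearing in the GMVT.

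To upgrade the pointwise bound $w(t)\le w(a)$ to genuine monotonicity, I would fix any $a\le t_{1}<t_{2}\le b$ and write
\[
w(t_{2})-w(t_{1})\;=\;\frac{1}{\Gamma(\alpha)}\int_{t_{1}}^{t_{2}}(t_{2}-s)^{\alpha-1}\,{}^{c}_{t_{0}}D_{s}^{\alpha}w(s)\,ds\;+\;\frac{1}{\Gamma(\alpha)}\int_{a}^{t_{1}}\!\bigl[(t_{2}-s)^{\alpha-1}-(t_{1}-s)^{\alpha-1}\bigr]\,{}^{c}_{t_{0}}D_{s}^{\alpha}w(s)\,ds.
\]
Under the hypothesis the first integrand is manifestly non-positive; since $\alpha-1\le 0$, the bracketed kernel in the second integral is also non-positive, making that integrand non-negative. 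Reconciling these two competing contributions is the crux of the argument.

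This last step is the genuine obstacle: the memory carried from the base point $a$ across $[a,t_{1}]$ produces a term whose sign opposes the one needed. I would overcome it by a contradiction argument---assume $w(t_{2})>w(t_{1})$ for some pair; then continuity of ${}^{c}_{t_{0}}D_{t}^{\alpha}w$ near $t_{2}$, combined with the strict positivity of the leading kernel on a small subinterval, forces ${}^{c}_{t_{0}}D_{t}^{\alpha}w$ to take strictly positive values there, contradicting the hypothesis---or, more directly, by invoking the detailed treatment in the cited reference \cite{odib}. Part (ii) then follows at once by applying the proven (i) to $-w$.
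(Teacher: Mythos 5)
The paper does not prove this lemma at all: it is imported verbatim from the cited reference \cite{odib} as a preliminary, so there is no in-paper argument to measure you against. Judged on its own terms, the first half of your proposal is sound and is exactly what that reference actually establishes: the fractional fundamental theorem plus the weighted mean value theorem for integrals yields $w(t)=w(a)+\frac{(t-a)^{\alpha}}{\Gamma(\alpha+1)}\,{}^{c}_{t_{0}}D_{t}^{\alpha}w(\xi)$ for some $\xi\in(a,t)$, hence $w(t)\le w(a)$ for every $t\in[a,b]$ when the Caputo derivative is non-positive. The reduction of part (ii) to part (i) via $-w$ is also fine.

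The gap is in the final step, and it is not merely presentational: the contradiction argument you sketch does not close it. Assuming $w(t_{2})>w(t_{1})$ does not force ${}^{c}_{t_{0}}D_{t}^{\alpha}w$ to take positive values near $t_{2}$, precisely because of the memory term you yourself isolated. Concretely, take $g\ge 0$ continuous and supported in a small interval just to the right of $a$, and set $w=C-I_{a}^{\alpha}g$ (with $I_{a}^{\alpha}$ the Riemann--Liouville integral); then ${}^{c}_{a}D_{t}^{\alpha}w=-g\le 0$ on all of $[a,b]$, yet for $t$ beyond the support of $g$ one computes $\frac{d}{dt}I_{a}^{\alpha}g(t)=\frac{\alpha-1}{\Gamma(\alpha)}\int(t-s)^{\alpha-2}g(s)\,ds<0$, so $w$ is strictly increasing there. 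This shows that the two competing integrals in your decomposition genuinely cannot be reconciled for arbitrary interior points $t_{1}<t_{2}$: the generalised mean value theorem only controls $w(t)$ relative to the base point $a$ of the fractional operator. What your argument honestly delivers is $w(t)\le w(a)$ (which is the only consequence the paper actually uses, e.g.\ in the non-negativity theorem, where the comparison is made at the point where the derivative is evaluated), and deferring the remaining step to "the detailed treatment in \cite{odib}" does not rescue it, since that reference contains only the mean value theorem you have already invoked.
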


\begin{lemma} \cite{pet,podlub} \label{l3}
    Let us consider, a Caputo fractional order dynamical system as follows:
    $$^{c}_{t_0}D_{t}^{\alpha}(g(t))=f(g(t)), g(0)=g_{0}\in R^{n}$$  
    where, $0<\alpha<1$, $g(t)=(g_1(t),...,g_n(t))^{T}\in R^{n}$ and $f : [f_1, f_2, . . . , f_n] :
R^{n}\to R^{n}$. Let, $f(g^{*})=0$, then $g^{*}$ is an equilibrium point of the abovementioned fractional system. Let us assunme $J(g^{*})=\frac{\delta(\varphi_1,\varphi_2,....,\varphi_n)}{\delta(g_{1},g_{2},....,g_{n})}$ be the Jacobian matrix of the above system at equilibrium point $g = g^{*}$ and  $\tau_{j}$, j=1,2,3,...,n are the eigenvalues of $J(g^{*})$. Then the equilibrium point $g^{*}$ is stable if and only if $|arg(\tau_{j})|\ge \frac{\alpha \pi}{2}$ and eigenvalues with $|arg(\tau_{j})|= \frac{\alpha \pi}{2}$ have the
same geometric multiplicity and algebraic multiplicity. On the other hand, the equilibrium point $g^{*}$ is locally asymptotically stable if and only if $|arg(\tau_{j})|>\frac{\alpha \pi}{2}$ and unstable if and only if there exists
eigenvalue $\tau_{j}$ of the Jacobian matrix $J(g^{*})$ such that $|arg(\tau_{j})|<\frac{\alpha \pi}{2}$.
\end{lemma}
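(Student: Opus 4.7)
The plan is to argue in two stages: first reduce the stability question for the nonlinear fractional system to that of its linearization at $g^{*}$, and then read off the behavior of the linearization from a spectral decomposition combined with the sectorial asymptotics of the Mittag--Leffler function.

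I would begin by setting $g(t) = g^{*} + \eta(t)$ and applying Taylor's theorem to $f$ around $g^{*}$; since $f(g^{*}) = 0$, this yields $^{c}D_{t}^{\alpha}\eta = A\eta + R(\eta)$ with $A := J(g^{*})$ and $R(\eta) = O(\|\eta\|^{2})$ as $\eta \to 0$. The Caputo analogue of Lyapunov's first (indirect) method, established in Podlubny's monograph, then guarantees that if the spectrum of $A$ lies strictly inside the sector $\{z : |\arg z| > \alpha\pi/2\}$ the equilibrium $g^{*}$ is locally asymptotically stable, while a single eigenvalue with $|\arg\tau| < \alpha\pi/2$ is enough to force instability. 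So the problem reduces to analyzing the linear fractional system $^{c}D_{t}^{\alpha}\eta = A\eta$, whose solution admits the matrix Mittag--Leffler representation $\eta(t) = E_{\alpha}(At^{\alpha})\eta_{0}$.

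Passing to a Jordan decomposition $A = P(\Lambda + N)P^{-1}$ decouples this into blocks, and for a single Jordan block of eigenvalue $\tau$ and size $k$ I would use
$$E_{\alpha}\bigl((\tau I + N_{k})t^{\alpha}\bigr) = \sum_{j=0}^{k-1} \frac{t^{\alpha j}}{j!}\, E_{\alpha}^{(j)}(\tau t^{\alpha})\, N_{k}^{j}$$
to reduce stability to the growth of $t^{\alpha j} E_{\alpha}^{(j)}(\tau t^{\alpha})$ for $0 \le j \le k-1$. The key analytic input is the sectorial asymptotics of $E_\alpha$: for $|\arg z| > \alpha\pi/2$ one has $E_{\alpha}(z) = O(|z|^{-1})$ as $|z| \to \infty$, with the same algebraic decay surviving termwise differentiation, so every $t^{\alpha j}E_{\alpha}^{(j)}(\tau t^{\alpha})$ tends to $0$ and each Jordan block contributes a decaying mode. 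In the opposite sector $|\arg\tau| < \alpha\pi/2$ the leading behavior $E_{\alpha}(z) \sim \alpha^{-1}\exp(z^{1/\alpha})$ produces exponential blow-up, and hence instability. At the critical boundary $|\arg\tau| = \alpha\pi/2$ the scalar $E_{\alpha}(\tau t^{\alpha})$ remains bounded but does not decay, so a Jordan block of size $k \ge 2$ generates polynomial growth through the $t^{\alpha j}$ factor; this is exactly why geometric and algebraic multiplicities must coincide on the boundary for Lyapunov stability, and why no eigenvalue lying on that boundary can yield asymptotic stability.

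The main obstacle is the sectorial asymptotic analysis of $E_{\alpha}(z)$ in the three regimes $|\arg z|$ greater than, equal to, or less than $\alpha\pi/2$. These estimates come from the Hankel-contour integral representation of the Mittag--Leffler function together with a saddle-point argument; once they are in hand, the rest of the proof is routine Jordan-form bookkeeping combined with the standard fractional linearization theorem.
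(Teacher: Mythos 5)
The paper does not prove this lemma at all: it is quoted verbatim as a known preliminary (Matignon's stability theorem) with citations to Petr\'a\v{s} and Podlubny, so there is no in-paper argument to compare yours against. Your sketch is the standard proof of that cited result, and the linear part of it is essentially sound: the matrix Mittag--Leffler representation $\eta(t)=E_{\alpha}(At^{\alpha})\eta_{0}$, the Jordan-block expansion $\sum_{j}\frac{t^{\alpha j}}{j!}E_{\alpha}^{(j)}(\tau t^{\alpha})N_{k}^{j}$, and the three sectorial regimes of $E_{\alpha}$ (algebraic decay $O(|z|^{-1})$ for $|\arg z|>\alpha\pi/2$, exponential blow-up of $\alpha^{-1}e^{z^{1/\alpha}}$ for $|\arg z|<\alpha\pi/2$, and bounded oscillation on the critical rays, where a block of size $k\ge 2$ contributes growth of order $t^{j}$ and forces the multiplicity condition) are exactly the ingredients of Matignon's argument.

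One caveat worth flagging: as stated, the lemma asserts ``if and only if'' for the \emph{nonlinear} equilibrium, but your argument (like the sources) only delivers the sufficiency directions via the fractional Lyapunov indirect method --- strict containment of the spectrum in the stable sector implies local asymptotic stability, and one eigenvalue in the open unstable sector implies instability. The converse implications, and the behaviour of the nonlinear system when an eigenvalue sits exactly on the rays $|\arg z|=\alpha\pi/2$, are not accessible by linearization alone (the higher-order remainder $R(\eta)$ can then decide the outcome). Since the paper itself only ever invokes the lemma in the strict cases $|\arg(\tau_{j})|>\alpha\pi/2$ or $<\alpha\pi/2$, this imprecision is inherited from the quoted statement rather than introduced by you, but a careful write-up should restrict the ``iff'' claims to the linearized system.
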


\section{Well-posedness of the systems} \label{wp} Establishing the well-posedness of a predator-prey system is essential to guarantee that the model is mathematically sound, biologically meaningful, and practically applicable. This ensures that the model will operate predictably, accurately represent real-world interactions, and function as a reliable instrument for scientific research and practical applications in ecology and conservation. In this section, we begin by examining the well-posedness of the system (\ref{Final ode eq}) and then proceed to analyse the system (\ref{Frac eq}).
% \subsection{In case of ODE system (\ref{Final ode eq})}
\subsection{Non-negativeness of solutions}
\begin{theorem}
    The solutions of the system (\ref{Frac eq}) starting from $(x_1(0), x_2(0),x_3(0)) \in R^{3}_{+}$ are non-negative.
\end{theorem}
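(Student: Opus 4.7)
The plan is to exploit the fact that the right-hand side of each equation in (\ref{Frac eq}) vanishes whenever the corresponding state variable does, so the coordinate hyperplanes act as semi-barriers to the dynamics. Explicitly, each of the three equations factors as
\begin{equation*}
{}^{c}D_{t}^{\alpha} x_i(t) = x_i(t)\,\phi_i\bigl(x_1(t),x_2(t),x_3(t)\bigr),
\end{equation*}
where $\phi_1 = r_1(1-x_1) - r_2(1-m_1)(1+\beta x_1)x_2 - qr$, $\phi_2 = r_3 r_2(1-m_1)(1+\beta x_1)x_1 - r_4(1-m_2)x_3/(1+bx_2) - d_1$, and $\phi_3 = r_5 r_4(1-m_2)x_2/(1+bx_2) - d_2$. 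Local existence and uniqueness of a continuous solution on some maximal interval follow from Lemma \ref{uniquelemma}, because the right-hand side is locally Lipschitz on $\mathbb{R}_{+}^{3}$ (the only possible singularity, $1+bx_2=0$, does not occur for $x_2\ge 0$).

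I would then argue component-wise by contradiction. Fix $i\in\{1,2,3\}$, suppose $x_i(0)>0$ but the solution eventually leaves the non-negative orthant, and set $t_i=\inf\{t>0 : x_i(t)<0\}$. Continuity forces $x_i(t_i)=0$ while $x_j(t)\ge 0$ for each $j$ and all $t\in[0,t_i]$. Since the solution is continuous on the compact interval $[0,t_i]$ and $1+bx_2\ge 1$ on the non-negative orthant, there exists a constant $M_i>0$ with $|\phi_i|\le M_i$ on $[0,t_i]$, so
\begin{equation*}
{}^{c}D_{t}^{\alpha} x_i(t) \ge -M_i\,x_i(t) \qquad \text{for all } t\in[0,t_i].
\end{equation*}
A Mittag--Leffler comparison estimate (the lower-bound analogue of Lemma \ref{Ml}) then gives $x_i(t)\ge x_i(0)\,E_{\alpha}(-M_i t^{\alpha})>0$ throughout $[0,t_i]$. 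In particular $x_i(t_i)>0$, contradicting $x_i(t_i)=0$; hence no such first exit time can exist, $x_i(t)\ge 0$ on the interval of existence, and applying the argument to each $i\in\{1,2,3\}$ yields the theorem.

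The step I expect to require the most care is the Mittag--Leffler lower bound, since Lemma \ref{Ml} is stated only as an upper estimate. I would justify it either by rerunning the Laplace-transform proof of Lemma \ref{Ml} with the inequality reversed, or by invoking the standard fractional Gronwall-type comparison principle for ${}^{c}D_{t}^{\alpha}u \ge -\kappa u$ with $u(0)>0$; either route delivers the required positive lower bound and closes the proof. A minor secondary point is to verify that the bound $M_i$ is genuinely finite on $[0,t_i]$, which uses continuity of the solution on this closed interval together with the fact that the denominator $1+bx_2$ stays bounded away from zero on $\mathbb{R}_+^{3}$.
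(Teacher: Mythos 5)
Your proposal is correct in substance but follows a genuinely different route from the paper. The paper also argues by contradiction at the first time $t_1$ where a component vanishes, but it exploits only the qualitative fact that the right-hand side factors through $x_i$: at $x_1(t_1)=0$ the Caputo derivative equals zero, and Lemma (\ref{nd}) (the monotonicity criterion) is then invoked to conclude $x_1(t_1^{+})=0$, contradicting $x_1(t_1^{+})<0$. You instead extract a quantitative estimate: bounding $|\phi_i|\le M_i$ on the compact interval up to the first exit time, you obtain the linear lower inequality ${}^{c}D_{t}^{\alpha}x_i\ge -M_i x_i$ and a Mittag--Leffler comparison gives $x_i(t)\ge x_i(0)E_{\alpha}(-M_i t^{\alpha})>0$, which rules out the zero crossing outright. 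Your version costs you the lower-bound analogue of Lemma (\ref{Ml}) (the paper states that lemma only as an upper estimate, so you do need to justify the reversed inequality via the variation-of-constants formula and the positivity of $E_{\alpha,\alpha}$ on the negative real axis, as you indicate); in exchange it buys a strictly positive lower bound for the solution and avoids the paper's delicate pointwise application of the monotonicity lemma and its interpretation of $x_1(t_1^{+})$. One small repair: defining $t_i=\inf\{t>0: x_i(t)<0\}$ for a \emph{fixed} $i$ does not by itself guarantee $x_j(t)\ge 0$ for the other components on $[0,t_i]$; you should take the first exit time of the whole orthant (the minimum of the component-wise times) and run the argument for a component that vanishes there, since non-negativity of all components on that interval is what you use to bound $\phi_i$ (in particular $1+bx_2\ge 1$).
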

\begin{proof}
    To commence, let us demonstrate the non-negativity of the solutions $x_1(t)$ of the system (\ref{Frac eq}) i.e., $x_1(t)\ge 0$, $\forall t \ge t_{0}$. To prove the non-negativity of the solutions of the system (\ref{Frac eq}), we follow the approach given in \cite{li}. Now, let us assume that the above inequality is not true, then $\exists$ a constant $t_1$, $t_{0}\le t < t_{1}$ such that \\
\begin{equation}\label{nn}
\begin{split}
        &x_1(t)>0, t_{0}\le t < t_1\\
        &x_1(t_1)=0,\\
        &x_1(t_1^{+})<0
\end{split}
\end{equation}
 Now, using the first equation of the system (\ref{Frac eq}), we have\\
    $$^{c}_{t_0}D_{t_1}^{\alpha}x_1(t_1)|_{x_1(t_1)=0}=0.$$
By the use of Lemma (\ref{nd}), we find that $$x_1(t_1^{+})=0.$$ This leads to a contradiction since $x_1(t_1^{+})<0$. Thus, $x_1(t)\ge 0$, $\forall t \in [t_{0}, \infty)$. In a similar manner, we can ensure  $x_2(t) \ge 0$, $x_3(t) \ge 0, \forall t \in [t_{0}, \infty)$. Hence, the theorem.
\end{proof}

\subsection{Boundedness of solutions}

\begin{theorem}
    All solutions of system (\ref{Frac eq}) are bounded.
\end{theorem}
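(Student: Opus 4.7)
The plan is to exploit the linearity of the Caputo derivative by constructing a weighted sum of the three populations whose Caputo derivative eliminates the Holling-type predation terms and thus can be bounded by a linear differential inequality to which Lemma \ref{Ml} applies.

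First, I would introduce the auxiliary function
\begin{equation*}
W(t) = x_{1}(t) + \frac{1}{r_{3}}\, x_{2}(t) + \frac{1}{r_{3} r_{5}}\, x_{3}(t),
\end{equation*}
and, using linearity of $^{c}D_{t}^{\alpha}$, compute $^{c}D_{t}^{\alpha}(W(t))$ by taking the same weighted combination of the three right-hand sides of system (\ref{Frac eq}). The cross terms $r_{3} r_{2}(1-m_{1})(1+\beta x_{1})x_{1}x_{2}$ and $r_{5} r_{4}(1-m_{2})x_{2}x_{3}/(1+bx_{2})$ cancel against their counterparts, leaving
\begin{equation*}
{}^{c}D_{t}^{\alpha}(W(t)) = r_{1} x_{1}(1-x_{1}) - qr\, x_{1} - \frac{d_{1}}{r_{3}} x_{2} - \frac{d_{2}}{r_{3} r_{5}} x_{3}.
\end{equation*}

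Next, I would set $\mu = \min\{qr,\, d_{1},\, d_{2}\} > 0$ and add $\mu W(t)$ to both sides; the mortality/harvesting terms dominate $\mu$ times the corresponding population, so after cancellation the right-hand side is bounded above by $r_{1} x_{1}(1-x_{1}) + \mu x_{1} = x_{1}\bigl((r_{1}+\mu) - r_{1} x_{1}\bigr)$. Completing the square in $x_{1}$ (or just noting that this quadratic attains its maximum at $x_{1} = (r_{1}+\mu)/(2r_{1})$) yields
\begin{equation*}
{}^{c}D_{t}^{\alpha}(W(t)) \le -\mu\, W(t) + \frac{(r_{1}+\mu)^{2}}{4 r_{1}}.
\end{equation*}
Now Lemma \ref{Ml} gives
\begin{equation*}
W(t) \le \left(W(0) - \frac{(r_{1}+\mu)^{2}}{4 r_{1}\mu}\right) E_{\alpha}(-\mu t^{\alpha}) + \frac{(r_{1}+\mu)^{2}}{4 r_{1}\mu},
\end{equation*}
so $\limsup_{t\to\infty} W(t) \le (r_{1}+\mu)^{2}/(4 r_{1}\mu)$. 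Since the previous theorem guarantees $x_{1},x_{2},x_{3}\ge 0$ and all weights in $W$ are positive, each component is individually bounded, which delivers the claim and explicit uniform bounds $x_{1}\le M$, $x_{2}\le r_{3} M$, $x_{3}\le r_{3} r_{5} M$ with $M = \max\{W(0),(r_{1}+\mu)^{2}/(4 r_{1}\mu)\}$.

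The only genuinely delicate step is choosing the weights so that the higher-order interaction terms cancel exactly; the weights $1, 1/r_{3}, 1/(r_{3}r_{5})$ are dictated by the conversion efficiencies, and without this cancellation the Holling term $r_{4}(1-m_{2}) x_{2} x_{3}/(1+bx_{2})$ would obstruct any linear bound. Everything else reduces to a quadratic majorisation and a direct appeal to Lemma \ref{Ml}, with non-negativity supplied by the preceding theorem.
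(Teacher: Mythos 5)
Your proof is essentially the paper's own argument: the same weighted sum $W(t)=x_{1}+\tfrac{x_{2}}{r_{3}}+\tfrac{x_{3}}{r_{3}r_{5}}$, the same cancellation of the interaction terms, the same quadratic majorisation giving the constant $\tfrac{(r_{1}+\mu)^{2}}{4r_{1}}$, and the same appeal to Lemma \ref{Ml}; indeed you are slightly more careful than the paper in writing the Mittag--Leffler asymptote as $\tfrac{(r_{1}+\mu)^{2}}{4r_{1}\mu}$ (i.e.\ $b_{2}/b_{1}$). Two small caveats. First, you should not put $qr$ into the minimum defining $\mu$: the term $-qr\,x_{1}$ is already nonpositive by the non-negativity theorem and can simply be discarded, whereas your choice forces $\mu=0$ whenever $qr=0$ (a case the paper explicitly entertains, e.g.\ $q=r=0$ in the simulations), and Lemma \ref{Ml} requires $b_{1}\neq 0$; the paper's choice $0<\Delta<\min(d_{1},d_{2})$ avoids this. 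Second, Lemma \ref{Ml} is stated only for $0<\alpha<1$, while system (\ref{Frac eq}) allows $\alpha=1$; the paper closes this with a separate classical differential-inequality (Gronwall-type) case, which your write-up omits.
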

\begin{proof}
     To show the boundedness of all the solutions of the system (\ref{Frac eq}), we shall contemplate a function, $W(t)=x_1(t)+\frac{x_2(t)}{r_3}+\frac{x_3(t)}{r_3 r_5}$. Thus, we have

\begin{equation*} \label{}
       \begin{split}
          ^{c}D_{t}^{\alpha}(W(t))&=^{c}D_{t}^{\alpha}(x_1(t))+^{c}D_{t}^{\alpha}(\frac{x_2(t)}{r_3}) +^{c}D_{t}^{\alpha}(\frac{x_3(t)}{r_3 r_5})\\
          &=r_1x_1-r_1 x_1^2 -qr x_1- \frac{d_1 x_2}{r_3}- \frac{d_2 x_3}{r_3r_5}
       \end{split}
\end{equation*}
Now, considering any real number $\Delta$ such that

\begin{equation*} \label{}
       \begin{split}
          ^{c}D_{t}^{\alpha}(W(t))+\Delta W(t) &=r_1x_1-r_1 x_1^2 -qr x_1- \frac{d_1 x_2}{r_3}- \frac{d_2 x_3}{r_3r_5}+ \Delta (x_1(t)+\frac{x_2(t)}{r_3}+\frac{x_3(t)}{r_3 r_5})\\
          &\le r_1x_1-r_1 x_{1}^2+\Delta x_1+\frac{x_2}{r_3}(\Delta-d_1)+\frac{x_3}{r_3r_5}(\Delta-d_2)
       \end{split}
\end{equation*}
Let us consider, $0<\Delta<min(d_1,d_2)$ then we have,
\begin{equation*} \label{}
       \begin{split}
        ^{c}D_{t}^{\alpha}(W(t))+\Delta W(t) \le \frac{(r_1+\Delta)^2}{4r_1}=\Omega
       \end{split}
\end{equation*}

\textbf{Case 1:} When $0<\alpha<1$\\

We utilise Lemma (\ref{Ml}) provided in Section 3, then we obtain

\begin{equation*} \label{}
       \begin{split}
          W(t)\le (W(0)-\frac{(r_1+\Delta)^2}{4r_1})E_{\alpha}(-dt^{\alpha})+\frac{(r_1+\Delta)^2}{4r_1} \to \frac{(r_1+\Delta)^2}{4r_1}, \text{as \emph{t}}  \to \infty
       \end{split}
\end{equation*}

where, $E_{\alpha}$ is the Mittag–Leffler function. Hence,  $$W(t)\le \frac{(r_1+\Delta)^2}{4r_1}, \text{as \emph{t}}  \to \infty.$$
Therefore, all the solutions of the system described in equation (\ref{Frac eq}) for $0<\alpha<1$ originating in region $R^{3}_{+}$ enters the region $\Upsilon$ defined by

$$\Upsilon=\Bigl\{(x_1,x_2,x_3)\in R^{3}_{+}: W \le \frac{(r_1+\Delta)^2}{4r_1}+\epsilon, \epsilon > 0\bigl\}.$$

\textbf{Case 2:} When $\alpha=1$\\

We apply the notions of differential inequality, we obtain
$0<W(t)\le\frac{\Omega}{\Delta}(1-e^{-\Delta t})+W(0)e^{-\Delta t}$. However, $0<W(t)\le \frac{\Omega}{\Delta}$, when $t\to \infty$. Therefore, all the solutions of the system (3) are restricted to the region $\zeta=  \left((x_1,x_2,x_3): 0 \le x_1+\frac{x_2}{r_3} +\frac{x_3}{r_3 r_5} \le \frac{\Omega}{\Delta} + \Theta, \forall \Theta > 0\right)$. The boundedness of the system (\ref{Frac eq}) is consequently established.

\end{proof}

\subsection{Existence and uniqueness}

\begin{theorem}
For nonnegative initial conditions, system (\ref{Frac eq}) always exhibits unique solutions.
\end{theorem}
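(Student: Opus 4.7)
The plan is to invoke Lemma (\ref{uniquelemma}) by verifying that the right-hand side of system (\ref{Frac eq}), viewed as a vector field $\phi(t, H)$ with $H = (x_1, x_2, x_3)^{T}$, satisfies a local Lipschitz condition on any bounded subregion $\upsilon \subset R^{3}_{+}$. Since the preceding boundedness theorem already establishes that solutions enter and remain within the invariant region $\Upsilon$, it suffices to prove the Lipschitz property on this compact set, which then produces a unique global solution for each nonnegative initial datum.

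First I would fix any two points $H_1 = (x_1, x_2, x_3)$ and $H_2 = (y_1, y_2, y_3)$ in $\upsilon$ and write out the three components of $\phi(t, H_1) - \phi(t, H_2)$ separately. Denote by $M > 0$ a uniform upper bound on each coordinate over $\upsilon$, which exists by the previous subsection. For the polynomial expressions appearing in the first two components, namely $r_1 x_1(1-x_1)$, $r_2(1-m_1)(1+\beta x_1) x_1 x_2$, and $qr x_1$, I would apply the standard add-and-subtract trick (e.g., $x_1 x_2 - y_1 y_2 = x_1(x_2 - y_2) + y_2(x_1 - y_1)$) to produce estimates of the form $C_i(|x_1-y_1|+|x_2-y_2|+|x_3-y_3|)$, where each $C_i$ depends polynomially on $M$ and on the fixed model parameters.

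The main technical subtlety is the Holling type II term $\frac{r_4(1-m_2) x_2 x_3}{1+b x_2}$, which enters both the second and third equations. I would manipulate the difference via
\begin{equation*}
\frac{x_2 x_3}{1+b x_2} - \frac{y_2 y_3}{1+b y_2} = \frac{x_2 x_3 (1+b y_2) - y_2 y_3 (1+b x_2)}{(1+b x_2)(1+b y_2)},
\end{equation*}
bound the denominator below by $1$ using $x_2, y_2 \ge 0$, and then expand the numerator by adding and subtracting a suitable mixed term (such as $y_2 x_3$ inside the bracket) to expose a linear combination of $|x_2-y_2|$ and $|x_3-y_3|$ whose coefficients are controlled by $M$.

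Summing the component-wise estimates produces a constant $K > 0$, depending only on $M$ and the fixed parameters $r_i, m_i, \beta, b, q, r, d_1, d_2$, such that $\|\phi(t, H_1) - \phi(t, H_2)\| \le K\,\|H_1 - H_2\|$ in the $\ell^1$ norm used in Lemma (\ref{uniquelemma}). Invoking the lemma then yields a unique solution of system (\ref{Frac eq}) on $[t_0, \infty)$ for every nonnegative initial condition, completing the proof. The only real obstacle is the rational Holling term, and it is resolved cleanly because its denominator stays bounded away from zero on $R^{3}_{+}$.
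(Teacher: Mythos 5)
Your proposal is correct and follows essentially the same route as the paper: the paper also restricts to a bounded region (a cube $F$ with $\max(|x_1|,|x_2|,|x_3|)\le N$ rather than the invariant set $\Upsilon$), applies the same add-and-subtract decompositions to the polynomial and Holling-type terms, bounds the denominator $(1+bx_2)(1+b\bar{x}_2)$ from below, and invokes Lemma (\ref{uniquelemma}) with the resulting Lipschitz constant. No substantive difference in method.
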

\begin{proof}
We will utilise the methodology described in \cite{li1} in order to prove the existence of unique solutions of the system (\ref{Frac eq}).
Let us consider, the region
    $$F=\{ (x_1,x_2,x_3)\in R^{3}:max(|x_1|,|x_2|,|x_3|)\le N\}$$
    Let, $x=(x_1,x_2,x_3) \in F $ and $\Bar{x}=(\Bar{x_1},\Bar{x_2},\bar{x_3})\in F$. Now, let us consider, a function \\
    $$G(x)= (G_1(x), G_2(x), G_3(x))$$ where,
$$G_1(x)= r_1 x_1 (1-x_1) -r_2 (1-m_1) (1+ \beta x_1) x_1 x_2- qr x_1,$$ $$G_2(x)=r_3 r_2 (1-m_1)(1+ \beta x_1) x_1 x_2 - \frac{r_4 (1-m_2) x_2 x_3}{(1 + b x_2)} - d_1 x_2,$$
$$G_3(x)=\frac{r_5 r_4 (1-m_2) x_2 x_3}{(1 + b x_2)} - d_2 x_3$$
Now, we have
   \begin{equation} \label{10}
       \begin{split}
           &||G(x)-G(\bar{x})||=|G_1(x)-G_1(\bar{x})|+|G_2(x)-G_2(\bar{x})|+|G_3(x)-G_3(\bar{x})| 
       \end{split}
    \end{equation}
  we find,\\
   \begin{equation} \label{11}
       \begin{split}
           |G_1(x)-G_1(\bar{x})|&=|r_1 x_1 (1-x_1) -r_2 (1-m_1) (1+ \beta x_1) x_1 x_2- qr x_1-r_1 \bar{x_1} (1-\bar{x_1})\\
           &+r_2 (1-m_1) (1+ \beta \bar{x_1}) \bar{x_1} \bar{x_2}+ qr \bar{x_1}|\\
           &\le |r_1(x_1-\bar{x_1})|+|r_1(x_1^2-\bar{x_1}^2)|+|r_2(1-m_1)(\bar{x_1}\bar{x_2}-x_1 x_2)|\\
           &+|r_2(1-m_1)\beta (\bar{x_1}^2\bar{x_2}-x_1^2 x_2)|+|q r (x_1-\bar{x_1})| \\
           &\le r_1|x_1-\bar{x_1}|+r_1|x_1-\bar{x_1}||x_1+\bar{x_1}|+r_2(1-m_1)|\bar{x_2}(x_1-\bar{x_1})+x_1 (x_2-\bar{x_2})|\\
           &+|r_2(1-m_1)\beta (\bar{x_1}^2\bar{x_2}-x_1^2 x_2)|+|q r (x_1-\bar{x_1})|\\
           &\le r_1|x_1-\bar{x_1}|+r_1|x_1-\bar{x_1}||x_1+\bar{x_1}|+r_2(1-m_1)|\bar{x_2}(x_1-\bar{x_1})+x_1 (x_2-\bar{x_2})|\\
            &+r_2(1-m_1)\beta |(-x_1x_2(x_1-\bar{x_1})-\bar{x_2}^2(x_2-\bar{x_2})-\bar{x_1}x_2(x_1-\bar{x_1}))|+|q r (x_1-\bar{x_1})|\\
       \end{split}
\end{equation}
In a Similar way, we get
\begin{equation} \label{12}
       \begin{split}
           |G_2(x)-G_2(\bar{x})|&\le r_3r_2(1-m_1)|(\bar{x_1}\bar{x_2}-x_1 x_2)|+ r_3 r_2(1-m_1)\beta |(\bar{x_1}^2\bar{x_2}-x_1^2 x_2)||+d_1|x_2-\bar{x_2}|\\
           &+r_4(1-m_2)|\frac{x_2 x_3}{(1+b x_2)}-\frac{\bar{x_2} \bar{x_3}}{(1+b \bar{x_2})}|\\
           |G_3(x)-G_3(\bar{x})|&\le  r_5 r_4(1-m_2)|\frac{x_2 x_3}{(1+b x_2)}-\frac{\bar{x_2} \bar{x_3}}{(1+b \bar{x_2})}| +d_2|x_3-\bar{x_3}|
       \end{split}
       \end{equation}

From equations (\ref{10}), (\ref{11}), and (\ref{12}), we get

\begin{equation} \label{13}
       \begin{split}
           ||G(x)-G(\bar{x})|| &\le r_1|x_1-\bar{x_1}|+r_1|x_1-\bar{x_1}||x_1+\bar{x_1}| + r_2(1-m_1)(1+r_3)|\bar{x_2}(x_1-\bar{x_1})|\\
           &+r_2(1-m_1)(1+r_3) |x_1 (x_2-\bar{x_2})|+ r_2(1-m_1)\beta (1+r_3)|-x_1x_2(x_1-\bar{x_1})|\\
           &+ r_2(1-m_1)\beta (1+r_3)|\bar{x_2}^2(x_2-\bar{x_2})|+r_2(1-m_1)\beta (1+r_3)|-\bar{x_1}x_2(x_1-\bar{x_1})|\\
           &+r_4(1-m_2)(1+r_5)|\frac{\bar{x_3}(x_2-\bar{x_2})+(x_3-\bar{x_3})(1+b \bar{x_2})x_2}{(1+b x_2)(1+b \bar{x_2})}|+|q r (x_1-\bar{x_1})|\\
           &+d_1|x_2-\bar{x_2}|+d_2|x_3-\bar{x_3}|\\
           &\le (r_1+r_1 |x_1+\bar{x_1}|+r_2(1-m_1)(1+r_3)|\bar{x_2}|+r_2(1-m_1)\beta (1+r_3)|x_1x_2|\\
           &+r_2(1-m_1)\beta (1+r_3)|\bar{x_1}x_2|+ |q r|)|x_1-\bar{x_1}|+(r_2(1-m_1)(1+r_3) |x_1|\\
           &+ r_2(1-m_1)\beta (1+r_3)|\bar{x_2}^2|+r_4(1-m_2)(1+r_5)|\frac{\bar{x_3}(x_2-\bar{x_2})}{(1+b x_2)(1+b \bar{x_2})}|+\\
           &|d_1|)|x_2-\bar{x_2}|+(r_4(1-m_2)(1+r_5)|\frac{x_2}{(1+b x_2)}|+|d_2|)|x_3-\bar{x_3}|\\
           &\le Y|x-\bar{x}|
       \end{split}
       \end{equation}

where, $Y=max\{y_1,y_2,y_3\}$ and $y_1=r_1+r_1 |x_1+\bar{x_1}|+r_2(1-m_1)(1+r_3)|\bar{x_2}|+r_2(1-m_1)\beta (1+r_3)|x_1x_2|+r_2(1-m_1)\beta (1+r_3)|\bar{x_1}x_2|+ |q r|$, $y_2=r_2(1-m_1)(1+r_3) |x_1|+ r_2(1-m_1)\beta (1+r_3)|\bar{x_2}^2|+r_4(1-m_2)(1+r_5)|\frac{\bar{x_3}(x_2-\bar{x_2})}{(1+b x_2)(1+b \bar{x_2})}|+|d_1|$, and $y_3=r_4(1-m_2)(1+r_5)|\frac{x_2}{(1+b x_2)}|+|d_2|$.\\

Therefore, the function G(x) fulfils the Lipschitz condition. Therefore, based on Lemma (\ref{uniquelemma}), it may be inferred that the system described in equation (\ref{Frac eq}) has a unique solution inside the specified domain F.

\end{proof}

\begin{figure}[H]
         \centering
         \includegraphics[width=180mm]{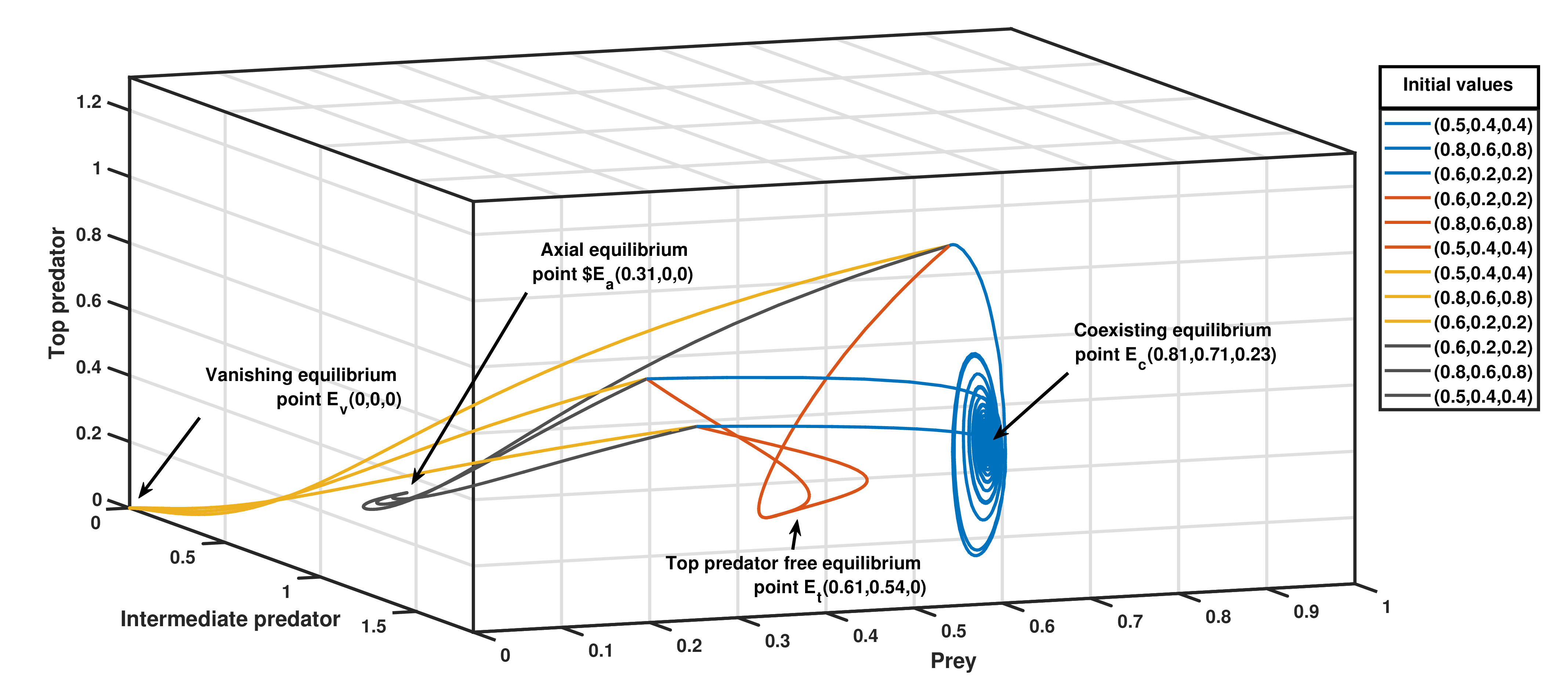}
         \caption{\emph{ This illustration demonstrates the local stability of all four ecologically feasible equilibrium points for different sets of parameter values.}}
         \label{lsode}
     \end{figure}

\section{Existence of equilibrium points and their stability analysis}\label{es}
%\subsection{In case of ODE system (\ref{Final ode eq})}\label{ss51}
In the context of a predator-prey dynamical system, equilibrium points refer to specific values of the population sizes at which the rates of change of the populations cease to exist. These equilibrium points play a crucial role in understanding the dynamics of predator-prey interactions. These points delineate a state of equilibrium in which populations remain constant and exhibit no changes over time. A point $(x_1^{eq},x_2^{eq},x_3^{eq})$ is said to be an equilibrium point of the system (\ref{Frac eq}) for $0<\alpha\le 1$, if they satisfy the following equations:\\
$$r_1 x_1 (1-x_1) =r_2 (1-m_1) (1+ \beta x_1) x_1 x_2+ qr x_1,$$
$$r_3 r_2 (1-m_1)(1+ \beta x_1) x_1 x_2 = \frac{r_4 (1-m_2) x_2 x_3}{(1 + b x_2)} + d_1 x_2,$$
$$\frac{r_5 r_4 (1-m_2) x_2 x_3}{(1 + b x_2)} = d_2 x_3.$$
Now, solving these equations we get four biologically feasible equilibrium points and these are given by 
\begin{enumerate}
\item  Vanishing equilibrium point $E_{v}(0,0,0)$,
\item Axial equilibrium point $E_{a}(1-\frac{q r}{r_1},0,0)$,
\item Top predator free equilibrium point $E_{t}(A,B,0)$, and
\item Coexisting equilibrium point $E_{c}(C,D,E)$.
\end{enumerate}
Here, vanishing equilibrium point $E_{v}(0,0,0)$ represents the situation in which all three species perish.  Axial equilibrium point $E_{a}(1-\frac{q r}{r_1},0,0)$  represents a situation in which only the prey persists and all the predators vanish. Top predator free equilibrium point $E_{t}(A,B,0)$ represents the situation in which only the top predator vanishes while coexisting equilibrium point $E_{c}(C,D,E)$ signifies the situation in which all three species coexist. Morever, $$A=\frac{-r_2 r_3 + m_1 r_2 r_3 + \omega_1}{
 2 \beta r_2 r_3 - 2 \beta m_1 r_2 r_3},$$ $$B=\frac{-r_1 ((-1 + m_1) r_2 r_3 + 
   \omega_1) + 
 \beta (2 d_1 r_1 + (r q - r_1) ((-1 + m_1) r_2 r_3 + \omega_1))}{2 \beta^2 d_1 (-1 + 
   m_1) r_2},$$ $$C=\frac{b d_2 (-r q + r_1) + d_2 (r_2 - m_1 r_2) - (-1 + m_2) (r q - r_1) r_4 r_5}{
b d_2 r_1 + \beta d_2 (-1 + m_1) r_2 + (-1 + m_2) r_1 r_4 r_5},$$ $$D=-\frac{d_2}{b d_2 + (-1 + m_2) r_4 r_5},$$ and \\$$E=\frac{r_5 (\beta^2 d_1 d_2^2 (-1 + m_1)^2 r_2^2 + 
    b^2 d_2^2 \omega_2 - (-1 + 
       m_2) r_1 r_4 r_5 \omega_3 + 
    \beta (-1 + m_1) (-1 + 
       m_2) r_2 r_4 r_5 \omega_4 + 
    b d_2 \omega_5)}{(b d_2 + (-1 + m_2) r_4 r_5) (b d_2 r_1 + 
    \beta d_2 (-1 + m_1) r_2 + (-1 + m_2) r_1 r_4 r_5)^2}.$$\\
   Here, $\omega_1=\sqrt{(-1 + m_1) r_2 r_3 (-4 \beta d_1 - r_2 r_3 + m_1 r_2 r_3)}$, $\omega_2=(d_1 r_1^2 + (-1 + m_1) (r q - r_1) (\beta r q - r_1 - 
          \beta r_1) r_2 r_3)$, $\omega_3=(d_2 (-1 + m_1)^2 r_2^2 r_3 - (-1 + 
          m_2) (d_1 r_1 - (-1 + m_1) (r q - r_1) r_2 r_3) r_4 r_5)$, $\omega_4=(2 d_1 d_2 r_1 + (r q - 
          r_1) r_3 (d_2 (-1 + m_1) r_2 + (-1 + m_2) (r q - r_1) r_4 r_5))$, $\omega_5=(r_1 (-d_2 (-1 + m_1)^2 r_2^2 r_3 + 
          2 (-1 + m_2) (d_1 r_1 - (-1 + m_1) (r q - r_1) r_2 r_3) r_4 r_5) + 
       \beta (-1 + 
          m_1) r_2 (2 d_1 d_2 r_1 + (r q - r_1) r_3 (d_2 (-1 + m_1) r_2 + 
             2 (-1 + m_2) (r q - r-1) r_4 r_5)))$.\\\\
\textbf{Conditions of existence:}
\begin{enumerate}
\item  Vanishing equilibrium point $E_{v}(0,0,0)$ always exists.
\item Axial equilibrium point $E_{a}(1-\frac{q r}{r_1},0,0)$ exists iff $r < \frac{r_1}{q}$.
\item Top predator free equilibrium point $E_{t}(A,B,0)$ exists iff $r < \frac{r_1}{q}$ and 
$r_2 > \frac{-d_1 r_1^2}{-\beta r^2 q^2 r_3 + \omega_6}$.
\item Coexisting equilibrium point $E_{c}(C,D,E)$ exists iff $r_1 > \frac{d_2 (-1 + m_1) r_2}{b d_2 + (-1 + m_2) r_4 r_5}$, $r_3 > -\frac{d_1 \omega_7^2}{(-1 + m_1) \omega_8 r_2 \omega_9}$, $r_4 > \frac{b d_2}{r_5 - m_2 r_5}$, and $r < \frac{b d_2 r_1 + d_2 (r_2 - m_1 r_2) + (-1 + m_2) r_1 r_4 r_5}{q (b d_2 + (-1 + m_2) r_4 r_5)}$.
\end{enumerate}

Here, $\omega_6= \beta r^2 m_1 q^2 r_3 + r q r_1 r_3 + 
   2 \beta r q r_1 r_3 - r m_1 q r_1 r_3 - 2 \beta r m_1 q r_1 r_3 - r_1^2 r_3 - 
   \beta r_1^2 r_3 + m_1 r_1^2 r_3 + \beta m_1 r_1^2 r_3$, $\omega_7=(b d_2 r_1 + 
     \beta d_2 (-1 + m_1) r_2 + (-1 + m_2) r_1 r_4 r_5)$, $\omega_8=(\beta r q - 
     r_1 - \beta r_1)$, $\omega_9=(b d_2 + (-1 + m_2) r_4 r_5) (b d_2 (r q - r_1) + 
     d_2 (-1 + m_1) r_2 + (-1 + m_2) (r q - r_1) r_4 r_5)$. \\\\
\textbf{Local stability:}
\begin{theorem} \label{lsva}
    The vanishing equilibrium point $E_{v}(0,0,0)$ is locally stable iff $r > \frac{r_1}{q}$ for all $\alpha \in (0,1]$.
\end{theorem}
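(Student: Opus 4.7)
The plan is to apply Lemma \ref{l3}, which characterises local stability of a Caputo fractional-order dynamical system through the arguments of the Jacobian's eigenvalues at the equilibrium. First I would compute the Jacobian matrix $J(x_1,x_2,x_3)$ associated with the right-hand side of system (\ref{Frac eq}). Evaluating each partial derivative at the vanishing point $E_v(0,0,0)$, every term that carries a factor of $x_1$, $x_2$, or $x_3$ drops out, so the off-diagonal predation, conversion, and odour-mediated coupling terms all vanish. This reduces $J(E_v)$ to a diagonal matrix whose entries are precisely the linear (per-capita) coefficients of $x_1$, $x_2$, $x_3$ in the three equations at the origin.

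Once the Jacobian is in this diagonal form, its eigenvalues can be read off directly: $\tau_1 = r_1 - qr$, $\tau_2 = -d_1$, and $\tau_3 = -d_2$. Since the natural death rates $d_1,d_2$ are strictly positive by modelling assumption, $\tau_2$ and $\tau_3$ are real and negative, hence $|\arg(\tau_2)|=|\arg(\tau_3)|=\pi > \tfrac{\alpha\pi}{2}$ for every $\alpha\in(0,1]$. Thus the stability of $E_v$ is controlled solely by the sign of $\tau_1$.

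Next I would analyse $\tau_1 = r_1 - qr$ against the threshold supplied by Lemma \ref{l3}. If $r > r_1/q$, then $\tau_1 < 0$, giving $|\arg(\tau_1)|=\pi > \tfrac{\alpha\pi}{2}$, and combined with the previous observation about $\tau_2,\tau_3$ this yields local asymptotic stability of $E_v$ for every admissible $\alpha$. Conversely, if $r < r_1/q$ then $\tau_1 > 0$, so $|\arg(\tau_1)|=0 < \tfrac{\alpha\pi}{2}$, and Lemma \ref{l3} gives instability. This establishes both directions of the \emph{iff} statement uniformly in $\alpha\in(0,1]$.

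There is no genuinely hard step here; the mild obstacle is bookkeeping, namely verifying carefully that no surviving cross terms contribute at $E_v$ so that the Jacobian is truly diagonal, and handling the borderline case $r = r_1/q$ (which produces $\tau_1=0$ and therefore lies outside the strict inequality of Lemma \ref{l3}) transparently so that the dichotomy claimed by the theorem is not disturbed. Since the theorem asserts stability for $r>r_1/q$ (strict), the critical case can simply be noted and excluded from the stability regime, in line with the statement.
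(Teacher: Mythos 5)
Your proposal is correct and follows essentially the same route as the paper: compute the Jacobian at the origin, observe it is diagonal with eigenvalues $r_1-qr$, $-d_1$, $-d_2$, and apply Lemma (\ref{l3}) to conclude stability precisely when $r>\frac{r_1}{q}$. If anything, your treatment is slightly more careful than the paper's, since you explicitly argue both directions of the \emph{iff} and flag the borderline case $r=\frac{r_1}{q}$.
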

\begin{proof} The diagonal elements of the Jacobian matrix $J_{va}$ are the eigenvalues of $J_{va}$. The entries $-r q + r_1,  -d_1,$ and $-d_2$ are the eigenvalues of $J_{va}$. Now, utilising Lemma (\ref{l3}), we get the vanishing equilibrium point $E_{v}(0,0,0)$ is locally stable iff $r > \frac{r_1}{q}$ for all $\alpha \in (0,1)$. Moreover, it is readily apparent that for $\alpha=1$, $E_{v}(0,0,0)$ is locally stable iff $r > \frac{r_1}{q}$.  Therefore, the result.
         %The eigenvalues of the Jacobian matrix at $E_{v}(0,0,0)$, denoted by $J_{va}$ are $-d_1$, $-d_2$, and $-r q + r_1$. The condition for all these eigenvalues to be negative is that $r>\frac{r_1}{q}$. Therefore, the result.

\end{proof}

\begin{figure}[H]
     \centering
     \begin{subfigure}{0.45\textwidth}
         \centering
         \includegraphics[width=\textwidth]{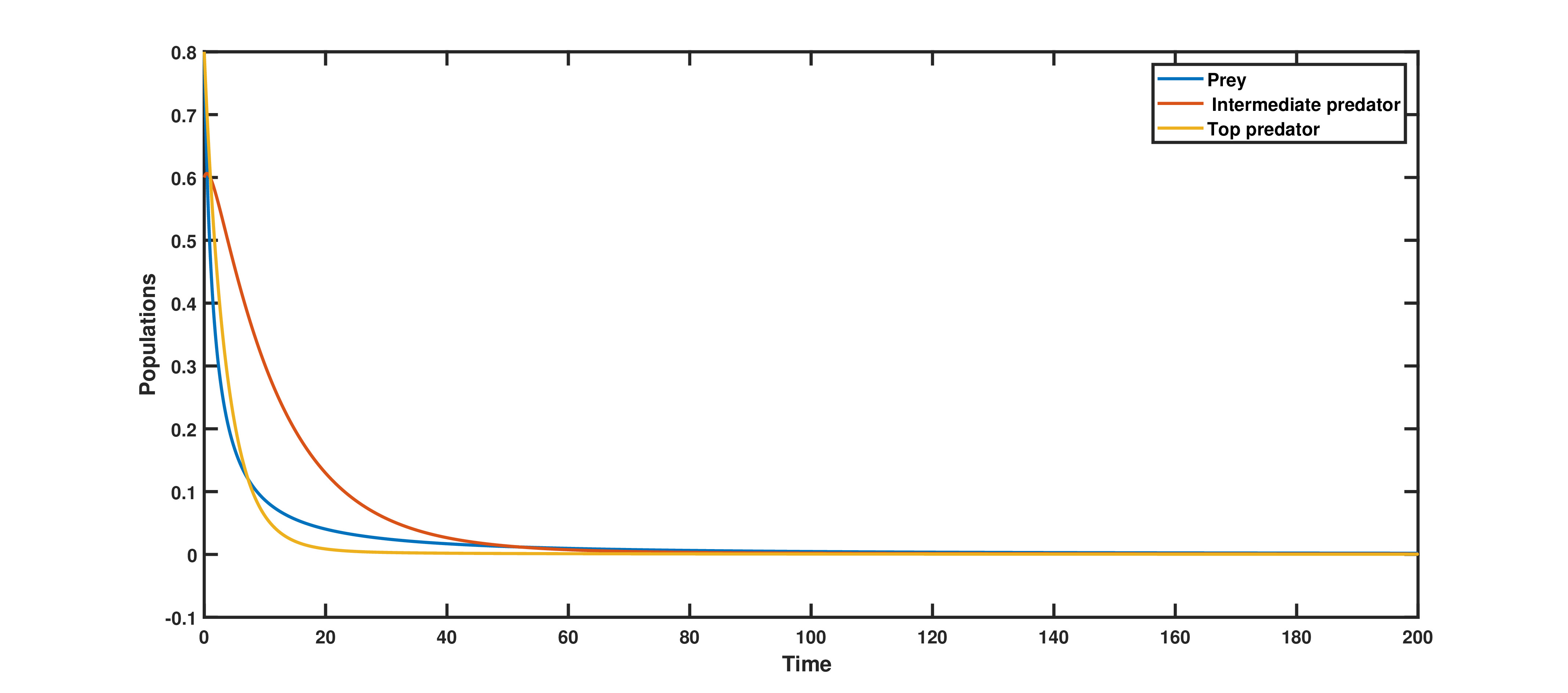}
         \caption{\emph{Local stability of $E_v$}}
         \label{lsvafig}
     \end{subfigure}
      \hfill
     \begin{subfigure}{0.45\textwidth}
         \centering
         \includegraphics[width=\textwidth]{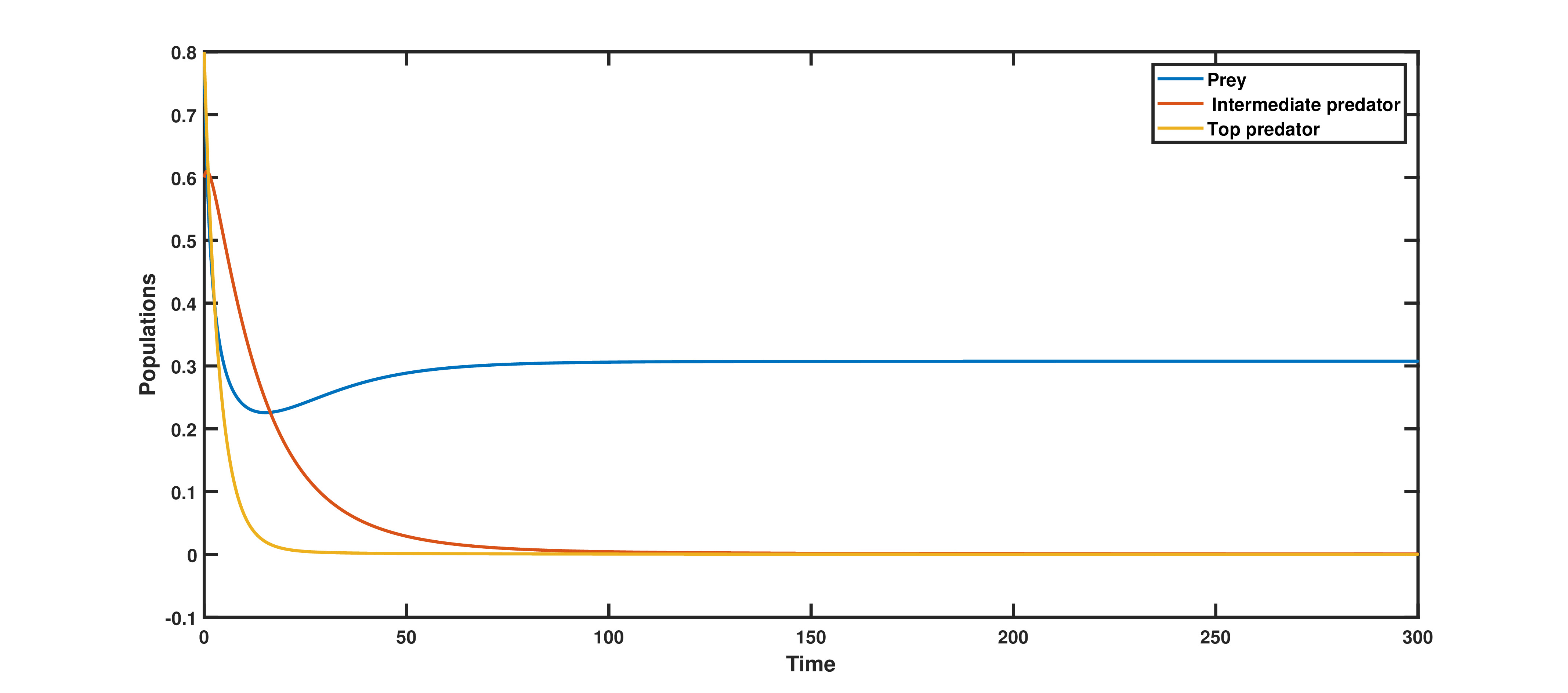}
         \caption{\emph{Local stability of $E_a$}}
         \label{lsaxfig}
     \end{subfigure}
     \hfill
     \begin{subfigure}{0.45\textwidth}
         \centering
         \includegraphics[width=\textwidth]{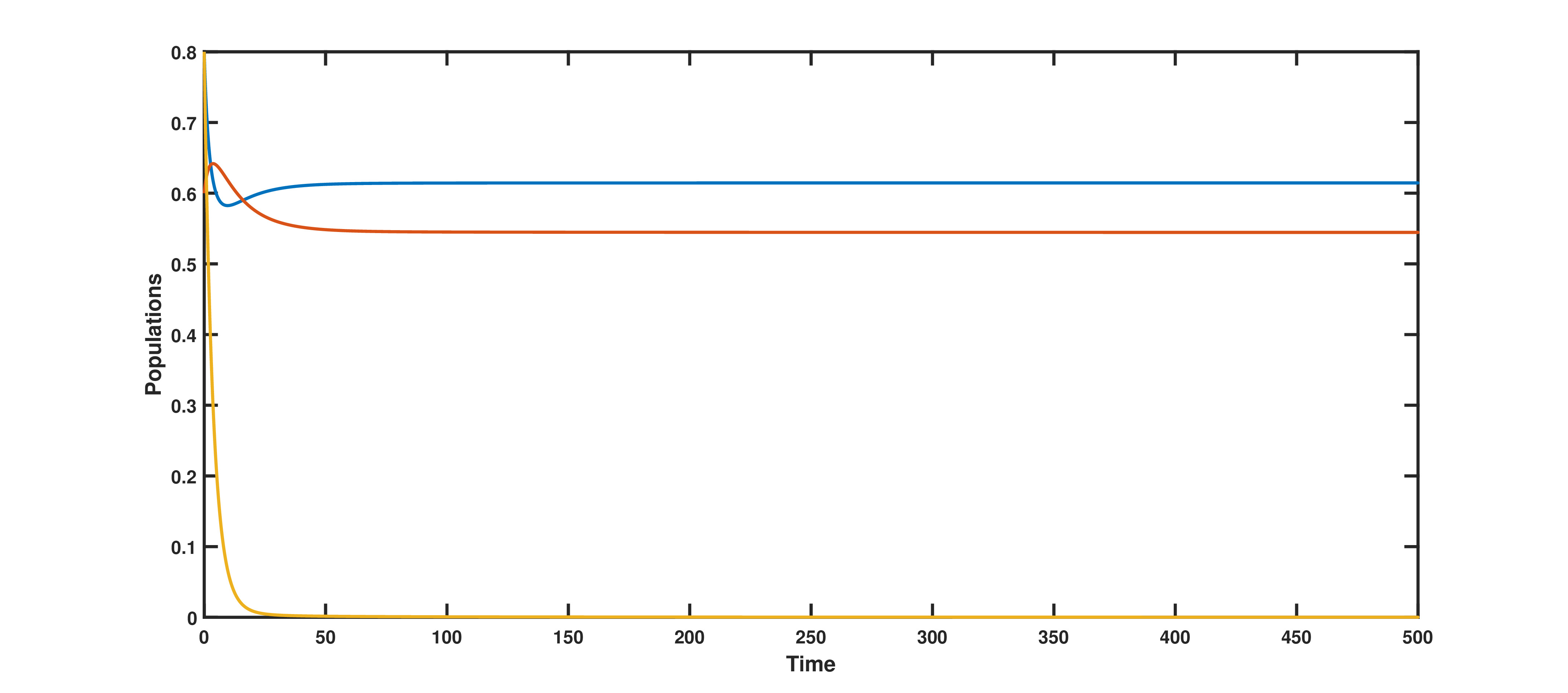}
         \caption{\emph{Local stability of $E_t$}}
         \label{lstofig}
     \end{subfigure}
     \hfill
     \begin{subfigure}{0.45\textwidth}
         \centering
         \includegraphics[width=\textwidth]{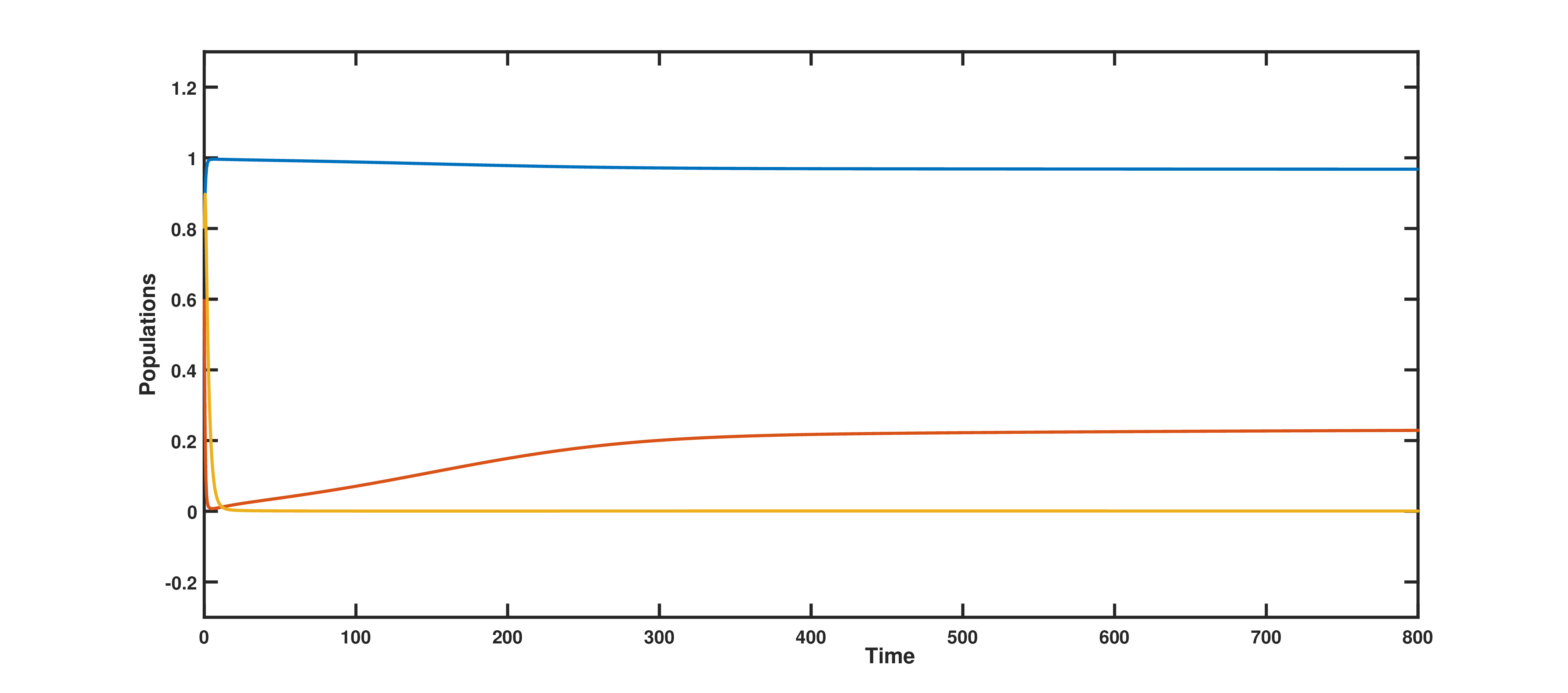}
         \caption{\emph{Local stability of $E_c$}}
         \label{lscofig}
     \end{subfigure}
       \caption{\emph{ This diagram illustrates the local stability of all the biologically viable fixed points of the system (\ref{Frac eq}) at a fractional order of $\alpha=0.98$.}}
        \label{lsfracfig}
\end{figure}

\begin{theorem}\label{lsax}
    The axial equilibrium point $E_{a}(1-\frac{q r}{r_1},0,0)$ is locally stable iff $r < \frac{r_1}{q}$ and $r_2 < \frac{-d_1 r_1^2}{-\beta r^2 q^2 r_3 + \omega_6}$ for all $\alpha \in (0,1]$.
\end{theorem}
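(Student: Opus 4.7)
The plan is to apply Lemma \ref{l3} directly, exactly as in the proof of Theorem \ref{lsva}. First I would write down the Jacobian $J$ of the right-hand side of (\ref{Frac eq}) at a general point $(x_1,x_2,x_3)$, and then substitute $x_2=x_3=0$, $x_1 = 1-\frac{qr}{r_1}$. Because $x_2$ and $x_3$ vanish, every partial derivative of $f_2$ with respect to $x_1$, of $f_1$ with respect to $x_3$, of $f_3$ with respect to $x_1$ and $x_2$, etc., that carries a factor of $x_2$ or $x_3$ will be zero. Consequently, $J_{ax}$ at $E_a$ will be upper triangular, and its eigenvalues will just be its diagonal entries.

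The diagonal entries are easy to read off:
\begin{equation*}
\lambda_1 = qr - r_1, \qquad \lambda_2 = r_3 r_2 (1-m_1)\Bigl(1+\beta\bigl(1-\tfrac{qr}{r_1}\bigr)\Bigr)\Bigl(1-\tfrac{qr}{r_1}\Bigr) - d_1, \qquad \lambda_3 = -d_2.
\end{equation*}
Since all three eigenvalues are real, the condition $|\arg(\lambda_j)|>\alpha\pi/2$ required by Lemma \ref{l3} (for any $\alpha\in(0,1]$) is equivalent to $\lambda_j<0$ for $j=1,2,3$. Note $\lambda_3=-d_2<0$ automatically. Next $\lambda_1<0$ reduces immediately to $r<r_1/q$, which is also the existence condition for $E_a$ to lie in the non-negative octant. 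Finally, solving $\lambda_2<0$ for $r_2$ gives
\begin{equation*}
r_2 < \frac{d_1}{r_3(1-m_1)\bigl(1-\tfrac{qr}{r_1}\bigr)\bigl(1+\beta(1-\tfrac{qr}{r_1})\bigr)}
= \frac{d_1 r_1^{2}}{r_3(1-m_1)(r_1-qr)\bigl(r_1+\beta(r_1-qr)\bigr)}.
\end{equation*}

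The main (and only) obstacle is algebraic: I must verify that the clean inequality above matches the form $r_2 < \dfrac{-d_1 r_1^{2}}{-\beta r^2 q^2 r_3+\omega_6}$ stated in the theorem. To do this, I would expand $-\beta r^{2}q^{2}r_{3}+\omega_{6}$ and factor out $-r_{3}(1-m_1)$, obtaining $-r_{3}(1-m_1)\bigl[(r_1-qr)r_1+\beta(r_1-qr)^{2}\bigr]=-r_{3}(1-m_1)(r_1-qr)\bigl(r_1+\beta(r_1-qr)\bigr)$, so that the two minus signs cancel and the expressions agree. Combining $\lambda_1<0$ and $\lambda_2<0$ yields the stated stability criterion for every $\alpha\in(0,1)$; the case $\alpha=1$ follows from the classical Routh--Hurwitz criterion applied to the same triangular Jacobian, giving the same inequalities. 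This covers the entire range $\alpha\in(0,1]$ and completes the proof.
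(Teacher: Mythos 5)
Your proposal is correct and follows essentially the same route as the paper: compute the Jacobian at $E_a$, observe it is triangular with real diagonal eigenvalues $qr-r_1$, $\xi_{a,2}$, and $-d_2$, and apply Lemma (\ref{l3}) for $0<\alpha<1$ together with negativity (Routh--Hurwitz) for $\alpha=1$. The only difference is cosmetic: the paper writes the middle eigenvalue directly as $\frac{-d_1 r_1^2+\beta r^2q^2r_2r_3-r_2\omega_6}{r_1^2}$, whereas you derive the factored form and verify the identity $-\beta r^2q^2r_3+\omega_6=-r_3(1-m_1)(r_1-qr)\bigl(r_1+\beta(r_1-qr)\bigr)$, which the paper leaves implicit.
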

\begin{proof} The Jacobian matrix of the system (\ref{Frac eq}) around $E_{a}(1-\frac{q r}{r_1},0,0)$ is as follows:

    $$J_{ax}=\begin{bmatrix}
r q - r_1 & \frac{(-1 + m1) (q r - r_1) (\beta q r - r_1 - \beta r_1) r_2}{r_1^2} & 0 \\
0 & \frac{-d_1 r_1^2 + \beta r^2 q^2 r_2 r_3 - r_2\omega_6}{r_1^2} & 0 \\
0 & 0 & -d_2
\end{bmatrix}
$$ 
Obviously, the diagonal elements $\xi_{a,1}=r q - r_1$, $\xi_{a,2}=\frac{-d_1 r_1^2 + \beta r^2 q^2 r_2 r_3 - r_2\omega_6}{r_1^2}$, and $\xi_{a,3}=-d_2$ are the eigenvalues of the Jacobian matrix $J_{va}$. Now, to study the local stability of the system (\ref{Frac eq}) around the equilibrium point $E_{a}$, we utilise Lemma (\ref{l3}). Here, $|arg(\xi_{a,1})|=\pi >\frac{\gamma \pi}{2}$ if $r < \frac{r_1}{q}$, $|arg(\xi_{a,2})|=\pi >\frac{\gamma \pi}{2}$ if $r_2 < \frac{-d_1 r_1^2}{-\beta r^2 q^2 r_3 + \omega_6}$, and $|arg\xi_{a,3})|=\pi >\frac{\gamma \pi}{2}$. Hence, the system (\ref{Frac eq}) is locally asymptotically stable around the axial equilibrium point $E_{a}$ for all $\alpha \in (0,1)$ if the conditions (\romannum{1}) $r < \frac{r_1}{q}$ and (\romannum{2}) $r_2 < \frac{-d_1 r_1^2}{-\beta r^2 q^2 r_3 + \omega_6}$ hold. Furthermore, $E_{a}(1-\frac{q r}{r_1},0,0)$ is locally stable for $\alpha=1$ iff all the eigenvalues of $J_{ax}$ are negative and all the eigenvalues of $J_{ax}$ are negative iff $r < \frac{r_1}{q}$ and $r_2 < \frac{-d_1 r_1^2}{-\beta r^2 q^2 r_3 + \omega_6}$. Hence, the proof.
    %The eigenvalues of the Jacobian matrix $J_{ax}$ around $E_{a}(1-\frac{q r}{r_1},0,0)$ are $-d_2$, $r q - r_1$, and $\frac{-d_1 r_1^2 + \beta r^2 q^2 r_2 r_3 - r_2\omega_6}{r_1^2}$. Now, the axial equilibrium point $E_{a}(1-\frac{q r}{r_1},0,0)$ is locally stable iff all the eigenvalues of $J_{ax}$ are negative and all the eigenvalues of $J_{ax}$ are negative iff $r < \frac{r_1}{q}$ and $r_2 < \frac{-d_1 r_1^2}{-\beta r^2 q^2 r_3 + \omega_6}$. Hence, the proof.
\end{proof}
\begin{theorem}\label{lsto}
     The top predator free equilibrium point $E_{t}(A,B,0)$ is locally stable iff $M_1>0$, $M_2>0$, $M_3>0$, and $M_1 M_2-M_3>0$. The definitions of the symbols $M_i$ are provided within the proof.
\end{theorem}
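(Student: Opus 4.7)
The plan is to evaluate the Jacobian of the system (\ref{Frac eq}) at $E_{t}(A,B,0)$ and exploit the vanishing of the third coordinate. Because $\partial f_{3}/\partial x_{1}\equiv 0$ and $\partial f_{3}/\partial x_{2}$ carries a factor of $x_{3}$, the Jacobian has the block-triangular form
\begin{equation*}
J_{to}=\begin{pmatrix} p_{11} & p_{12} & 0 \\ p_{21} & p_{22} & p_{23} \\ 0 & 0 & p_{33} \end{pmatrix},
\end{equation*}
in which $p_{33}=\frac{r_{5}r_{4}(1-m_{2})B}{1+bB}-d_{2}$ is immediately one eigenvalue and the remaining entries are the ordinary partials of $f_{1},f_{2}$ evaluated at $(A,B,0)$. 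The characteristic polynomial thus factors as $(\lambda-p_{33})\bigl[\lambda^{2}-(p_{11}+p_{22})\lambda+(p_{11}p_{22}-p_{12}p_{21})\bigr]$, which I expand to the cubic $\lambda^{3}+M_{1}\lambda^{2}+M_{2}\lambda+M_{3}=0$ and declare
\begin{equation*}
M_{1}=-(p_{11}+p_{22}+p_{33}),\quad M_{2}=p_{11}p_{22}-p_{12}p_{21}+(p_{11}+p_{22})p_{33},\quad M_{3}=-p_{33}(p_{11}p_{22}-p_{12}p_{21}).
\end{equation*}

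The stability conclusion then follows from two parallel criteria. For $\alpha=1$, the Routh--Hurwitz test for the above cubic requires exactly $M_{1}>0$, $M_{3}>0$, and $M_{1}M_{2}-M_{3}>0$, with $M_{2}>0$ automatic from the other three (but listed for readability); these conditions are equivalent to every eigenvalue having strictly negative real part. For $\alpha\in(0,1)$, I appeal to Lemma \ref{l3}: a real negative eigenvalue has $|\arg\tau|=\pi>\alpha\pi/2$, and a complex conjugate pair with negative real part has $|\arg\tau|\in(\pi/2,\pi)$, so again $|\arg\tau|>\alpha\pi/2$. Consequently the same four inequalities deliver local asymptotic stability of $E_{t}$ uniformly in $\alpha\in(0,1]$, matching the statement of the theorem.

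The hard part is algebraic bookkeeping rather than any conceptual step: since $A$ and $B$ themselves involve the radical $\omega_{1}$, writing $M_{1},M_{2},M_{3}$ directly in the raw parameters is unwieldy. I plan to simplify by invoking the equilibrium identities at $E_{t}$, namely $r_{1}(1-A)-r_{2}(1-m_{1})(1+\beta A)B-qr=0$ (which collapses $p_{11}$ to $-r_{1}A-\beta r_{2}(1-m_{1})AB$) and $r_{3}r_{2}(1-m_{1})(1+\beta A)A=d_{1}$ (which forces $p_{22}=0$). With these reductions the three $M_{i}$ become manageable expressions in $A,B,p_{33}$; the signs $p_{11}<0$, $p_{12}<0$, $p_{21}>0$ are automatic from $0<m_{1}<1$, so only the sign of $p_{33}$ and the Routh--Hurwitz determinant $M_{1}M_{2}-M_{3}$ are genuinely delicate. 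The proof will state the $M_{i}$ in the final compact form and record these inequalities as the advertised stability conditions.
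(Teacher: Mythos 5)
Your proposal is correct and follows the same skeleton as the paper's proof: you compute the Jacobian at $E_t$, observe its block structure with $(0,0,p_{33})$ in the third row, split off the eigenvalue $p_{33}$ (the paper's $t_{33}$), and apply Routh--Hurwitz to the resulting cubic. Your use of the equilibrium identity $r_3r_2(1-m_1)(1+\beta A)A=d_1$ to force $p_{22}=0$ is exactly what makes your $M_i$ coincide with the paper's $M_1=-(t_{11}+t_{33})$, $M_2=-(t_{12}t_{21}-t_{11}t_{33})$, $M_3=t_{12}t_{21}t_{33}$; the paper simply records $t_{22}=0$ in the matrix without comment. Where you genuinely diverge is in the fractional case: the paper treats $0<\alpha<1$ separately and imposes $t_{33}<0$, $t_{11}^2+4t_{12}t_{21}<0$, $t_{11}<0$ --- i.e.\ it forces the quadratic factor to have a complex-conjugate pair with negative real part --- which are sufficient conditions that neither match the theorem's stated $M_i$-inequalities nor cover the case of two negative real roots of the quadratic. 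Your route (Routh--Hurwitz $\Rightarrow$ all eigenvalues in the open left half-plane $\Rightarrow$ $|\arg\tau|>\pi/2\ge\alpha\pi/2$ by Lemma~\ref{l3}) is cleaner, uniform in $\alpha$, and actually consistent with the theorem as stated; your remark that $M_2>0$ is implied by the other three inequalities is also correct. One caveat you share with the paper: the ``iff'' cannot literally hold for $\alpha<1$, since Matignon's criterion $|\arg\tau|>\alpha\pi/2$ is strictly weaker than Hurwitz stability (an eigenvalue with positive real part but large argument is admissible), so both your argument and the paper's establish only sufficiency in the fractional case; necessity is a defect of the theorem statement rather than of your proof.
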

\begin{proof}
\textbf{Case 1:} When $0<\alpha<1$:
The Jacobian matrix of the system (\ref{Frac eq}) around the top predator free equilibrium point $E_{t}(A,B,0)$ is $J_{to}$, already given in subsection (\ref{es}). The characteristic equation of $J_{to}$ can be written as $$(t_{33}-\nu)(\nu^2-C_{1}^{*}\nu+C_{2}^{*})=0,$$ where $C_1= t_{11}$ and $C_2=-t_{12} t_{21}$. Let us consider the three eigenvalues of $J_{to}$: $\xi_{t,1}$, $\xi_{t,2}$, and $\xi_{t,3}$. Let the equation $t_{33}-\nu$ give $\xi_{t,1}$, and from the equation $\nu^2-C_{1}^{*}\nu+C_{2}^{*}$, we get $\xi_{t,2}$ and $\xi_{t,3}$. Now, according to Lemma (\ref{l3}), the system (\ref{Frac eq}) shows local stability around the equilibrium point $E_t$ if $|arg(\xi_{t,1})|=\pi>\frac{\gamma \pi}{2}$, $|arg(\xi_{t,2})|=\pi>\frac{\gamma \pi}{2}$, and $|arg(\xi_{t,3})|=\pi>\frac{\gamma \pi}{2}$. If the conditions (\romannum{1}) $t_{33}<0$, (\romannum{2}) $t_{11}^2 + 4 t_{12} t_{21}<0$, and (\romannum{3}) $t_{11}<0$ satisfies, then $|arg(\xi_{t,1})|=\pi>\frac{\gamma \pi}{2}$, $|arg(\xi_{t,2})|=\pi>\frac{\gamma \pi}{2}$, and $|arg(\xi_{t,3})|=\pi>\frac{\gamma \pi}{2}$. Thus, the system (\ref{Frac eq}) shows local stability around the equilibrium point $E_t$ if the criteria (\romannum{1}) $t_{33}<0$, (\romannum{2}) $t_{11}^2 + 4 t_{12} t_{21}<0$, and (\romannum{3}) $t_{11}<0$ hold. Hence, the theorem.\\

\textbf{Case 2:} When $\alpha=1$: The Routh-Hurwitz criterion is employed to assess the local stability of $E_{t}(A,B,0)$. The Jacobian matrix of the system (\ref{Final ode eq}) around the top predator free equilibrium point $E_{t}(A,B,0)$ is given by

$$J_{to}=\begin{bmatrix}
t_{11} & t_{12} & 0 \\
t_{21} & 0 & t_{23} \\
0 & 0 & t_{33}
\end{bmatrix}
$$ \\
here,
$t_{11}=\frac{(\beta r q - r_1 - \beta r_1) (2 \beta d_1 + r_2 r_3 - m_1 r_2 r_3 - \omega_{10})}{(2 \beta^2 d_1)}$, $t_{12}=-\frac{d_1}{r_3}$, \\
   $t_{21}=-\frac{\omega_{10}(r_1 ((-1 + m_1) r_2 r_3 + 
      \omega_{10}) + 
   \beta (-2 d_1 r_1 - (r q - r_1) ((-1 + m_1) r_2 r_3 + \omega_{10})))}{2 \beta^2 d_1 (-1 + m_1) r_2}$, \\
   $t_{23}=\frac{(-1 + m_2) (-r_1 ((-1 + m_1) r_2 r_3 + \omega_{10}) + 
   \beta (2 d_1 r_1 + (r q - r_1) ((-1 + m_1) r_2 r_3 + \omega_{10}))) r_4}{2 \beta^2 d_1 (-1 + m_1) r_2 -b r_1 ((-1 + m_1) r_2 r_3 + 
    \omega_{10}) + \beta b (2 d_1 r_1 + (r q -r_1) ((-1 + m_1) r_2 r_3 + \omega_{10}))}$,\\
    $t_{33}=-d_2 + \frac{((-1 + m_2) (r_1 ((-1 + m_1) r_2 r_3 + \omega_{10}) + \beta (-2 d_1 r_1 - (r q - r_1) ((-1 + m_1) r_2 r_3 +\omega_{10}))) r_4 r_5)}{2 \beta^2 d_1 (-1 + m_1) r_2 - b r_1 ((-1 + m_1) r_2 r_3 + 
     \omega_{10}) + \beta b (2 d_1 r_1 + (r q - r_1) ((-1 + m_1) r_2 r_3 +\omega_{10}))}$, and \\
     $\omega_{10}=\sqrt{(-1 + 
    m_1) r_2 r_3 (-4 \beta d_1 + (-1 + m_1) r_2 r_3)}$.\\
    
Taking  $\sigma^3 + M_1 \sigma^2 + M_2\sigma + M_3 = 0$ as the characteristic equation of $J_{to}$, then $M_1=-(t_{11} + t_{33})$, $M_2=-(t_{12} t_{21} - t_{11} t_{33})$, and $M_3=t_{12} t_{21} t_{33}$. Now, $E_{t}(A,B,0)$ is locally asymptotically stable iff $M_{1} > 0$, $M_{2} > 0$, $M_{3} > 0$, and $M_1 M_2-M_3>0$. Hence, proved.

\end{proof}

\begin{theorem}\label{lscofrac}
If any of the criteria (\romannum{1}), (\romannum{2}), and (\romannum{3}) specified in the proof are satisfied, the system (\ref{Frac eq}) exhibits local stability near the coexisting equilibrium point $E_c(C,D,E)$ when $0<\alpha<1$.
\end{theorem}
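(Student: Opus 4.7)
The plan is to apply Lemma \ref{l3} to the coexistence equilibrium $E_c(C,D,E)$ in the regime $0<\alpha<1$, which requires us to show that every eigenvalue $\lambda_j$ of the Jacobian $J_c$ satisfies $|\arg(\lambda_j)|>\alpha\pi/2$. First I would compute $J_c$ by linearising the right-hand side of (\ref{Frac eq}) at $(C,D,E)$; this yields a $3\times 3$ matrix whose entries involve the reduction $\frac{r_5 r_4(1-m_2)D}{1+bD}=d_2$ coming from the third steady-state equation (so several entries simplify considerably). I would then form the characteristic polynomial
\begin{equation*}
p(\lambda)=\lambda^{3}+P_{1}\lambda^{2}+P_{2}\lambda+P_{3},
\end{equation*}
where $P_1=-\operatorname{tr}(J_c)$, $P_3=-\det(J_c)$, and $P_2$ is the sum of the principal $2\times 2$ minors of $J_c$, each expressible in terms of the partial derivatives at $E_c$.

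Having reduced stability to the location of the roots of $p(\lambda)$ in the complex plane, the next step is to invoke the fractional Routh--Hurwitz criteria for a cubic, which split naturally into three cases according to the sign of the discriminant $\Lambda(p)=18P_1P_2P_3-4P_1^{3}P_3+P_1^{2}P_2^{2}-4P_2^{3}-27P_3^{2}$. Case (i): if $\Lambda(p)>0$ the three roots are real, and the classical conditions $P_1>0,\ P_3>0,\ P_1P_2-P_3>0$ force all roots into the open left half-plane, whence $|\arg(\lambda_j)|=\pi>\alpha\pi/2$. Case (ii): if $\Lambda(p)<0$ there is one real root and a complex-conjugate pair; under the conditions $P_1\ge 0,\ P_2\ge 0,\ P_3>0$ combined with $\alpha<2/3$, the argument of the complex pair exceeds $\alpha\pi/2$. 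Case (iii): when $\Lambda(p)<0$, $P_1<0$, $P_2<0$, $P_3>0$, the alternative threshold condition $\alpha>\tfrac{2}{\pi}\arctan\!\left(\frac{\Im(\lambda)}{|\Re(\lambda)|}\right)$ (equivalently $|\arg(\lambda)|>\alpha\pi/2$) holds for the complex pair while the real root remains negative. I would state these three algebraic/angle conditions as the criteria (\romannum{1}), (\romannum{2}), (\romannum{3}) and verify each one implies $|\arg(\lambda_j)|>\alpha\pi/2$ for every $j$, hence stability by Lemma \ref{l3}.

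The hard part will be purely algebraic rather than conceptual: the coordinates $(C,D,E)$ of $E_c$ are already unwieldy (see the expressions involving $\omega_1,\dots,\omega_9$), so the entries of $J_c$ and, in turn, $P_1,P_2,P_3$ and $\Lambda(p)$ inflate into long expressions in the original parameters $(r_1,r_2,r_3,r_4,r_5,\beta,b,q,r,d_1,d_2,m_1,m_2)$. My strategy is to keep the argument at the level of the Jacobian entries $c_{ij}:=(J_c)_{ij}$, expressing $P_1,P_2,P_3$ symbolically via traces, cofactors and the determinant, and to leave the criteria (\romannum{1})--(\romannum{3}) phrased in terms of $P_1,P_2,P_3,\Lambda(p)$ rather than unpacking them into the parameters. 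This keeps the proof transparent; the explicit parameter inequalities are then a matter of substitution that can be verified numerically in the simulation section. The only subtle point is to exclude the borderline case where an eigenvalue lies exactly on the Stokes line $|\arg(\lambda)|=\alpha\pi/2$, which is automatically handled by requiring strict inequalities in (\romannum{1})--(\romannum{3}) and $\Lambda(p)\ne 0$.
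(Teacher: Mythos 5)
Your overall route is the same as the paper's: linearise at $E_c$, form the cubic characteristic polynomial, and dispatch the eigenvalue--argument condition of Lemma \ref{l3} through the discriminant-based fractional Routh--Hurwitz cases of \cite{ahm}. The paper does exactly this but goes no further than quoting the three conditions from \cite{ahm} and deferring verification to numerics, so your cases (i) and (ii) --- positive discriminant with the classical Routh--Hurwitz inequalities, and negative discriminant with $P_1\ge 0$, $P_2\ge 0$, $P_3>0$, $\alpha<2/3$ --- coincide with the paper's criteria (\romannum{1}) and (\romannum{2}), and your intention to actually check that each forces $|\arg(\lambda_j)|>\alpha\pi/2$ is more than the paper supplies.

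Your case (iii), however, is not the paper's and, as written, fails. The paper's third criterion is $\Delta(\varrho)<0$, $N_1>0$, $N_2>0$, $N_1N_2=N_3$, valid for all $\alpha\in(0,1)$. You instead take $\Lambda(p)<0$, $P_1<0$, $P_2<0$, $P_3>0$ and impose $\alpha>\tfrac{2}{\pi}\arctan\bigl(\Im(\lambda)/|\Re(\lambda)|\bigr)$. Under your sign pattern the real root is negative (the product of the roots is $-P_3<0$) while the complex pair has positive real part (the sum of the roots is $-P_1>0$), so stability demands $|\arg(\lambda)|=\arctan\bigl(|\Im(\lambda)|/\Re(\lambda)\bigr)>\alpha\pi/2$, i.e. $\alpha<\tfrac{2}{\pi}\arctan\bigl(|\Im(\lambda)|/\Re(\lambda)\bigr)$ --- the opposite of what you wrote. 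With ``$\alpha>$'' you have reproduced the \emph{instability} criterion of \cite{ahm}, and your parenthetical ``equivalently $|\arg(\lambda)|>\alpha\pi/2$'' contradicts the displayed inequality. Either reverse it (in which case the criterion is no longer expressible in $P_1,P_2,P_3$ alone, defeating your stated aim of keeping everything at the coefficient level) or replace case (iii) by the paper's coefficient-level condition $P_1>0$, $P_2>0$, $P_1P_2=P_3$ for all $\alpha\in(0,1)$. The rest --- the steady-state simplification $\tfrac{r_5r_4(1-m_2)D}{1+bD}=d_2$, the symbolic handling of $P_1,P_2,P_3$, the exclusion of the borderline $|\arg(\lambda)|=\alpha\pi/2$ --- is consistent with the paper.
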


\begin{proof}
In order to determine the local stability conditions of the system (\ref{Frac eq}) in the vicinity of the coexistence equilibrium point $E_c(C,D,E)$, it is necessary to calculate the Jacobian matrix of the system (\ref{Frac eq}) around the coexistence equilibrium point $E_{c}$. The Jacobian matrix of the system (\ref {Frac eq}) around the coexistence equilibrium point $E_{c}(C,D,E)$ is denoted as $J_{co}$, as previously provided in subsection (\ref{es}). Now, let us consider, $\varrho=\vartheta^3 + N_1 \vartheta^2 + N_2\vartheta + N_3$. The establishment of local asymptotic stability for the equilibrium point $E_{c}(C,D,E)$ in the system (\ref{Frac eq}) is contingent upon the satisfaction of any of the following conditions \cite{ahm}:\\
(\romannum{1}) If $\Delta (\varrho)>0$, $N_1>0$, $N_3>0$, and $N_1N_2-N_3>0$. \\
 (\romannum{2}) If $\Delta (\varrho)<0$, then $N_1\ge 0$, $N_2 \ge 0$, $N_3 > 0$, and $\gamma<\frac{2}{3}$.\\
 (\romannum{3}) If $\Delta (\varrho)<0$, $N_1>0$, $N_2>0$, $N_1 N_2 =N_3$, and $\gamma \in (0,1)$.\\
Here, $\Delta (\varrho)=18N_1N_2N_3+(N_1N_2)^{2}-4(N_1)^{2}N_3-4(N_2)^{2}-27(N_3)^{2}$.\\
In a subsequent section, the numerical verification of the local stability of $E_{c}(C,D,E)$ will be conducted.
\end{proof}
\begin{theorem}\label{lsco}
      The system (\ref{Frac eq}) shows local stability around the coexisting equilibrium point $E_{c}(C,D,E)$ when $\alpha=1$ if and only if $N_1$, $N_2$, and $N_3$ are all positive, $N_1 N_2-N_3$ is also positive. The symbols $N_i$ are defined directly in the proof.

\end{theorem}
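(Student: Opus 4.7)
The plan is to reduce this claim to a routine application of the classical Routh--Hurwitz criterion for a cubic, since at $\alpha=1$ the Caputo system (\ref{Frac eq}) coincides with the ODE system (\ref{Final ode eq}) and Lemma (\ref{l3}) collapses to the familiar condition that all eigenvalues of the Jacobian have strictly negative real part. So the whole statement amounts to characterising when the three roots of the characteristic polynomial at $E_c(C,D,E)$ lie in the open left half plane.

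First I would evaluate the Jacobian $J_{co}$ of system (\ref{Final ode eq}) at $E_c(C,D,E)$, using the equilibrium relations
\begin{equation*}
r_1(1-C) = r_2(1-m_1)(1+\beta C)D + qr,\qquad r_3 r_2(1-m_1)(1+\beta C)C = \tfrac{r_4(1-m_2)E}{1+bD} + d_1,\qquad \tfrac{r_5 r_4(1-m_2)D}{1+bD}=d_2,
\end{equation*}
to simplify the diagonal entries. Writing the characteristic polynomial as $\vartheta^{3}+N_1\vartheta^{2}+N_2\vartheta+N_3=0$, the coefficients come out as $N_1=-\mathrm{tr}(J_{co})$, $N_2$ equal to the sum of the $2\times 2$ principal minors of $J_{co}$, and $N_3=-\det(J_{co})$. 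These are the same $N_i$ that appeared earlier in Theorem (\ref{lscofrac}), so there is no need to redefine them.

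Next I would invoke the Routh--Hurwitz theorem for a real cubic: all three roots of $\vartheta^{3}+N_1\vartheta^{2}+N_2\vartheta+N_3$ have negative real part if and only if $N_1>0$, $N_3>0$, and $N_1N_2-N_3>0$. Note that positivity of $N_2$ is forced by the other three inequalities (from $N_1 N_2 > N_3 > 0$ with $N_1>0$ we get $N_2>0$), so including $N_2>0$ in the statement is redundant but harmless. Combining this with the Hartman--Grobman theorem (applicable since $\mathrm{Re}(\vartheta_j)\neq 0$ for each eigenvalue under these strict inequalities) yields local asymptotic stability of $E_c$ at $\alpha=1$, and the converse direction follows immediately because if any Routh--Hurwitz inequality fails non-strictly then at least one eigenvalue has non-negative real part, destroying linear stability.

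The only genuine obstacle is bookkeeping: the entries of $J_{co}$ are algebraically cumbersome because $C$, $D$, $E$ themselves are heavy closed-form expressions involving $\omega_1,\dots,\omega_5$. I would therefore not expand $N_1,N_2,N_3$ in terms of the bare parameters, but rather leave them symbolic in terms of the Jacobian entries, just as was done for $J_{to}$ in the proof of Theorem (\ref{lsto}). This keeps the argument transparent and mirrors the style already adopted in the paper, so the proof reduces essentially to a one-line citation of Routh--Hurwitz once the characteristic polynomial is written down.
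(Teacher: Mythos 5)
Your proposal is correct and follows essentially the same route as the paper: the paper likewise writes down the Jacobian $J_{co}$ at $E_c$, keeps the coefficients symbolic as $N_1=-(u_{11}+u_{22})$, $N_2=-u_{12}u_{21}+u_{11}u_{22}-u_{23}u_{32}$, $N_3=u_{11}u_{23}u_{32}$, and concludes by a direct citation of the Routh--Hurwitz criterion. Your additional observations (that $N_2>0$ is redundant given the other inequalities, and that Hartman--Grobman justifies passing from linear to nonlinear stability) are correct refinements the paper does not spell out, but they do not change the argument.
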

\begin{proof}
    The Jacobian matrix of the system (\ref{Final ode eq}) evaluated at the coexisting equilibrium point $E_{c}(C,D,E)$ is provided as follows:
$$J_{co}=\begin{bmatrix}
u_{11} & u_{12} & 0 \\
u_{21} & u_{22} & u_{23} \\
0 & u_{32} & 0
\end{bmatrix}
$$ 
here, $u_{11}=\frac{b d_2 (q r - r_1) + d_2 (-1 + m_1) r_2 + (-1 + m_2) (q r - r_1) r_4 r_5}{b d_2 + (-1 + m_2) r_4 r_5}$, \\
$u_{12}=\frac{(-1 + m_1) (\beta q r - r_1 - 
   \beta r_1) r_2 (b d_2 + (-1 + m_2) r_4 r_5) (b d_2 (q r - r_1) + 
   d_2 (-1 + m_1) r_2 + (-1 + m_2) (q r - r_1) r_4 r_5)}{(b d_2 r_1 + \beta d_2 (-1 + m_1) r_2 + (-1 + m_2) r_1 r_4 r_5)^2}$,\\
   $u_{21}=-\frac{d_2 (-1 + m_1) r_2 r_3 (-r_1 (b d_2 + (-1 + m_2) r_4 r_5) + 
   \beta (2 b d_2 (q r - r_1) + d_2 (-1 + m_1) r_2 + 
      2 (-1 + m_2) (q r - r_1) r_4 r_5))}{(b d_2 + (-1 + m_2) r_4 r_5) (b d_2 r_1 + 
   \beta d_2 (-1 + m_1) r_2 + (-1 + m_2) r_1 r_4 r_5)}$,\\
   $u_{22}=\frac{b d_2 (\beta^2 d_1 d_2^2 (-1 + m_1)^2 r_2^2 + 
   b^2 d_2^2 \omega_{11} - (-1 + 
      m_2) r_1 r_4 r_5 \omega_{12} + 
   \beta (-1 + m_1) (-1 + 
      m_2) r_2 r_4 r_5 \omega_{13} + 
   b d_2 \omega_{14})}{((-1 + m_2) r_4 r_5 (b d_2 r_1 + 
    \beta d_2 (-1 + m_1) r_2 + (-1 + m_2) r_1 r_4 r_5)^2)}$, \\
    $u_{23}=-\frac{d_2}{r_5}$, $u_{32}=\frac{(1 - m_2) (b d_2 + (-1 + m_2) r_4 r_5) (\beta^2 d_1 d_2^2 (-1 + m_1)^2 r_2^2 + 
   b^2 d_2^2 \omega_{11} - (-1 + 
      m_2) r_1 r_4 r_5 \omega_{12} + 
   \beta (-1 + m_1) (-1 + 
      m_2) r_2 r_4 r_5 \omega_{13} + 
   b d_2 \omega_{14})}{(-1 + m_2)^2 r_4 (b d_2 r_1 + \beta d_2 (-1 + m_1) r_2 + (-1 + m_2) r_1 r_4 r_5)^2}$, and $\omega_{11}=(d_1 r_1^2 + (-1 + m_1) (q r - r_1) (\beta q r - r_1 - 
         \beta r_1) r_2 r_3)$, $\omega_{12}=(d_2 (-1 + m_1)^2 r_2^2 r_3 - (-1 + 
         m_2) (d_1 r_1 - (-1 + m_1) (q r - r_1) r_2 r_3) r_4 r_5)$, $\omega_{13}=(2 d_1 d_2 r_1 + (q r - 
         r_1) r_3 (d_2 (-1 + m_1) r_2 + (-1 + m_2) (q r - r_1) r_4 r_5))$, $\omega_{14}=(r_1 (-d_2 (-1 + m_1)^2 r_2^2 r_3 + 
         2 (-1 + m_2) (d_1 r_1 - (-1 + m_1) (q r - r_1) r_2 r_3) r_4 r_5) + 
      \beta (-1 + 
         m_1) r_2 (2 d_1 d_2 r_1 + (q r - r_1) r_3 (d_2 (-1 + m_1) r_2 + 
            2 (-1 + m_2) (q r - r_1) r_4 r_5)))$.\\
            
            Now, let us consider $\vartheta^3 + N_1 \vartheta^2 + N_2\vartheta + N_3 = 0$ be the characteristic equation of $J_{co}$. Then, $N_1=-(u_{11} + u_{22})$, $N_2=-u_{12} u_{21} + u_{11} u_{22} - u_{23} u_{32}$, and $N_3=u_{11} u_{23} u_{32}$. Thus, the coexisting equilibrium point $E_{c}(C,D,E)$ is locally stable if and only if $N_{i}>0$ and $N_1 N_2-N_3>0$ for i=1,2,3, according to the Routh-Hurwitz criterion. Therefore, the proof.
\end{proof}

\section{Bifurcations}\label{hb} Bifurcation in ecology denotes a substantial modification in the configuration or conduct of an ecological system resulting from variations in factors. The bifurcation point refers to the precise instant when a system undergoes a change from one stable state to another, possibly leading to different population densities. Bifurcations can have substantial consequences for the management and conservation of ecosystems. 
\subsection{Transcritical bifurcation} Transcritical bifurcation is a phenomenon in dynamical systems when the stability of two equilibrium points, one stable and one unstable, switch as a parameter is changed. This phenomena is of great significance when it comes to comprehending diverse ecological and biological systems, as it can elucidate changes in population dynamics, species interactions, and ecosystem states. Theorems pertaining to this can be found below.

\begin{theorem}\label{odetrnsthm}
The system (\ref{Final ode eq}) undergoes a transcritical bifurcation near the equilibrium point $E_a$ at the critical value $r_1$=$q r=r_1^{tbp}$.
\end{theorem}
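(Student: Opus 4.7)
The plan is to invoke Sotomayor's theorem for transcritical bifurcation with $r_1$ as the bifurcation parameter and the critical value $r_1 = r_1^{tbp} = qr$. The first thing I would note is that at $r_1 = qr$ the axial equilibrium $E_a = (1-qr/r_1,0,0)$ collapses onto the vanishing equilibrium $E_v = (0,0,0)$; this collision of two equilibria is the geometric signature of a transcritical bifurcation, and by Theorem \ref{lsva} and Theorem \ref{lsax} the stability of the two equilibria is exchanged as $r_1$ crosses $qr$, which is exactly what the statement asserts.

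Next I would evaluate the Jacobian at the merged equilibrium $(0,0,0)$ at the critical parameter; the matrix is diagonal with entries $r_1 - qr,\ -d_1,\ -d_2$, so at $r_1 = qr$ it has a simple zero eigenvalue (assuming $d_1, d_2 > 0$) with right eigenvector $V = (1,0,0)^T$ and left eigenvector $W = (1,0,0)^T$. Writing the right-hand side of system (\ref{Final ode eq}) as $F(x;r_1)$, I would then verify the three Sotomayor conditions:
\begin{enumerate}
\item $W^T F_{r_1}(0,0,0;qr) = 0$, since $F_{r_1} = (x_1(1-x_1), 0, 0)^T$ vanishes at the origin;
\item $W^T \bigl[DF_{r_1}(0,0,0;qr)\,V\bigr] = 1 \neq 0$, because the $(1,1)$-entry of $DF_{r_1}$ at the origin is $1 - 2x_1\big|_{x_1=0} = 1$;
\item $W^T \bigl[D^2 F(0,0,0;qr)(V,V)\bigr] = -2r_1 = -2qr \neq 0$, since the only surviving second-order partial in the $V$-direction is $\partial^2 F_1/\partial x_1^2 = -2r_1$.
\end{enumerate}
By Sotomayor's theorem these three inequalities together yield a transcritical bifurcation at $r_1 = qr$.

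I do not expect a serious obstacle in this argument: the Jacobian at the origin is diagonal, so the eigenvectors are trivial, and the three Sotomayor quantities reduce to very short computations in the first component. The only thing to be careful with is the transversality condition (ii), where one must differentiate with respect to the bifurcation parameter \emph{before} plugging in the equilibrium, and then act on $V$; I would write this step out explicitly to avoid any sign ambiguity. Finally, I would remark that the conclusion is consistent with the existence conditions derived in Section \ref{es}, namely that $E_a$ exists only for $r < r_1/q$ (equivalently $r_1 > qr$), and with Theorems \ref{lsva}--\ref{lsax}, which show that $E_v$ is stable precisely on the opposite side of the threshold; the stability exchange across $r_1^{tbp} = qr$ confirms the transcritical nature of the bifurcation.
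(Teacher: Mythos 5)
Your proposal follows essentially the same route as the paper: Sotomayor's theorem applied at $E_a$ with bifurcation parameter $r_1$, the same left/right eigenvectors $(1,0,0)^T$ for the zero eigenvalue of the (diagonal) Jacobian at the critical value, and the same three conditions. Your explicit values for the transversality and quadratic terms ($1$ and $-2r_1$) differ slightly from the paper's ($qr$ and $-2r_1+2(-1+m_1)r_2$, the latter apparently retaining a cross term that should be killed by $v_2=0$), but since all of these are nonzero the conclusion is unaffected, and your computations are if anything the more careful ones.
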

\begin{proof}
The Jacobian matrix of the system (\ref{Final ode eq}) around $E_a$ is $J_{ax}$ as mentioned in the previous section. Using $r_1=q r=r_1^{tbp}$, we get

$$(J_{ax})_{r_1=r_1^{tbp}}=\begin{bmatrix}
0 & 0 & 0 \\
0 & -d_1 & 0 \\
0 & 0 & -d_2
\end{bmatrix}
$$ 
Now, we assume that the zero eigenvalues of $(J_{ax})_{r_1=r_1^{tbp}}$ and $(J_{ax})^{t}_{r_1=r_1^{tbp}}$ correspond to two eigenvectors, $U_1$ and $U_2$, respectively. After some computation, we get
$U_1=(u_{11},u_{12},u_{13})^t=(1, 0, 0)^t$ and $U_2=(u_{21},u_{22},u_{23})^t=(1,0,0)^t$. Now, the theorem put forward by Sotomayor \cite{perko} is used to demonstrate the occurrence of a transcritical bifurcation at $r_1=q r=r_1^{tbp}$ in the vicinity of $E_a$. Outlined below are the prerequisites for transcritical bifurcation, as stated in Sotomayor's theorem \cite{perko}.

$Z_{r_1}(E_a;r_1^{tbp})=\begin{bmatrix}
0\\
0 \\
0 
\end{bmatrix}
,  D(Z_{r_1}(E_1;r_1^{tbp}))U_1=\begin{bmatrix}
q r & 0 & 0\\
0 & 0 &0\\
0 & 0 & 0
\end{bmatrix}
\begin{bmatrix}
1\\
0\\
0
\end{bmatrix}=\begin{bmatrix}
q r \\
0 \\
0
\end{bmatrix}, $ and\\

$ D^2(Z_{r_1}(E_1;r_1^{tbp}))(U_1,U_1)=\begin{bmatrix}
-2 r_1 + 2 (-1 + m_1) r_2\\
-2 (-1 + m_1) r_2 r_3\\
0
\end{bmatrix}
$ .\\
Therefore,
\begin{equation*}
    \begin{split}
        U_2^T(Z_{r_1}(E_1;r_1^{tbp}))&=0\\
        U_2^T( D(Z_{r_1}(E_1;r_1^{tbp}))U_1)&=q r \ne 0,\\
        U_2^T(D^2(Z_{r_1}(E_1;r_1^{tbp}))(U_1,U_1))&=-2 r_1 + 2 (-1 + m_1) r_2 \ne 0
    \end{split}
\end{equation*}
Thus, the application of Sotomayor's theorem \cite{perko} proves the presence of a transcritical bifurcation around $E_a$ at $r_1$=$q r=r_1^{tbp}$. In addition, there are additional parameters that can be utilised as bifurcation parameters.

\end{proof}

\subsection{Hopf bifurcation} The Hopf bifurcation is a well-known and extensively studied phenomenon in the field of dynamical systems. It has been observed in various domains, including ecological models, where it plays a crucial role in understanding the qualitative changes in system behaviour as a parameter is systematically varied. The comprehension of Hopf bifurcation in the field of ecology holds significant importance for ecologists, as it enables them to comprehend the inherent capacity for intricate, cyclical patterns in the dynamics of populations. The following theorem establishes the conditions for the existence of Hopf bifurcation in the system (\ref{Final ode eq}) around the interior equilibrium point $E_{c}$.

%\subsection{In case of ODE system (\ref{Final ode eq})}
%This understanding further facilitates the prediction of ecosystem responses to alterations in environmental conditions or the implementation of various management practises.

\begin{theorem}\label{odehopfthm}
    The necessary and sufficient criteria for the system (\ref{Final ode eq}) to experience a Hopf bifurcation at $ m_1 = m_{1}^{hb}$ are as follows:\\
    (\romannum{1}) $N_{i}(m_{1}^{hb})>0$, i=1,2,3\\
    (\romannum{2}) $N_1(m_{1}^{hb})N_2(m_{1}^{hb})=N_3(m_{1}^{hb})$, \\
    (\romannum{3})$N_1(m_{1}^{hb})$ $N_2^{'}$($m_{1}^{hb}$) +$N_2(m_{1}^{hb})$ $N_1^{'}$($m_{1}^{hb}$) -$N_3^{'}$($m_{1}^{hb}$)   $ \neq 0$, j=1,2,3,\\
    %(\romannum{3}) $Re(\frac{d \delta_{j}}{d m_1})_{m_1=m_1^{hb}}\ne 0$, j=1,2,3,\\
here, $N_1$, $N_2$, and $N_3$ are already defined in the theorem (\ref{lsco}). $\delta_1$, $\delta_2$, and $\delta_3$ are the roots of the characteristic equation of the Jacobian matrix $J_{co}$.
\begin{proof}
    Please refer to theorem 5 in \cite{bhat}.
\end{proof}

\begin{figure}[H]
     \centering
     \begin{subfigure}{0.45\textwidth}
         \centering
         \includegraphics[width=\textwidth]{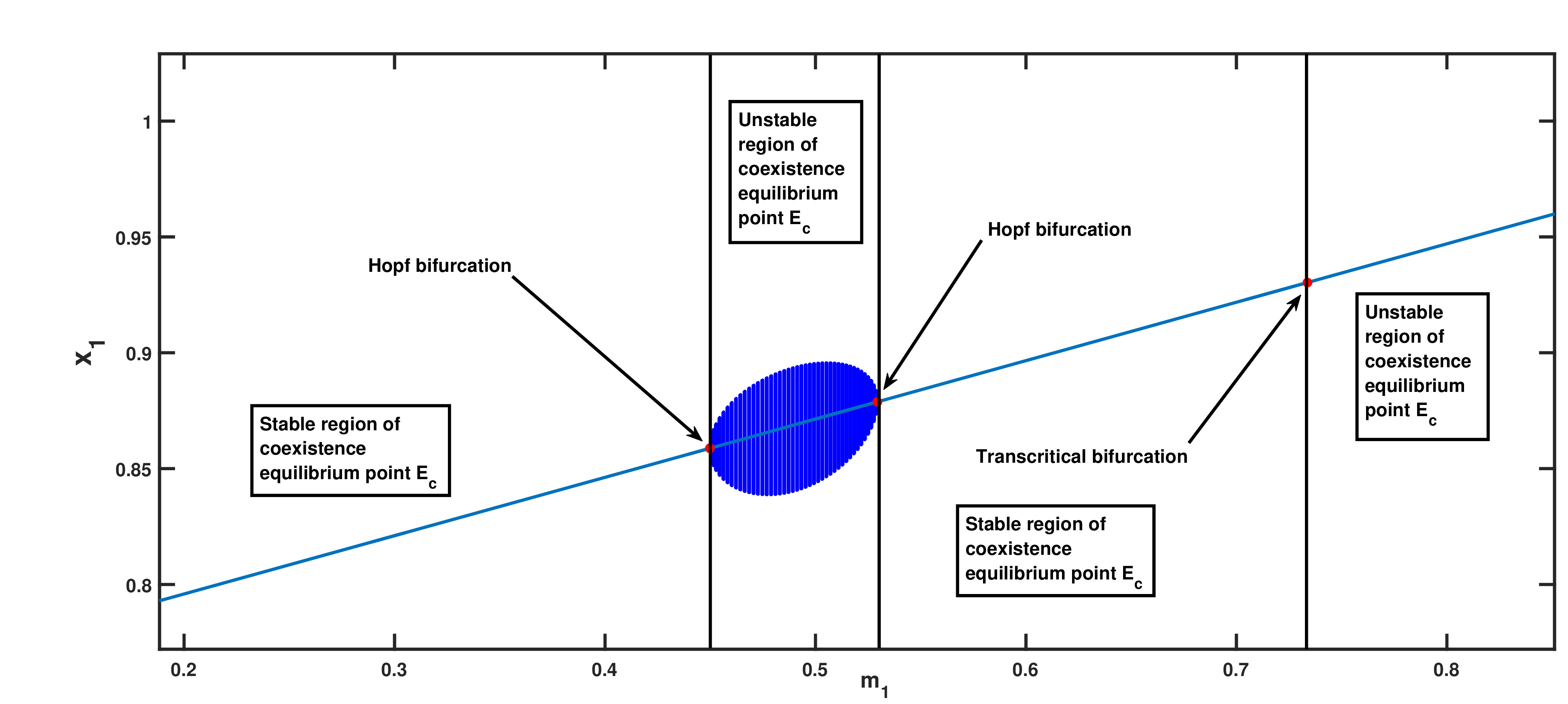}
         \caption{\emph{Two hopf bifurcations and one transcritical bifurcation occur when the parameter $m_1$ changes.}}
         \label{m1equilibrium curve}
     \end{subfigure}
      \hfill
     \begin{subfigure}{0.45\textwidth}
         \centering
         \includegraphics[width=\textwidth]{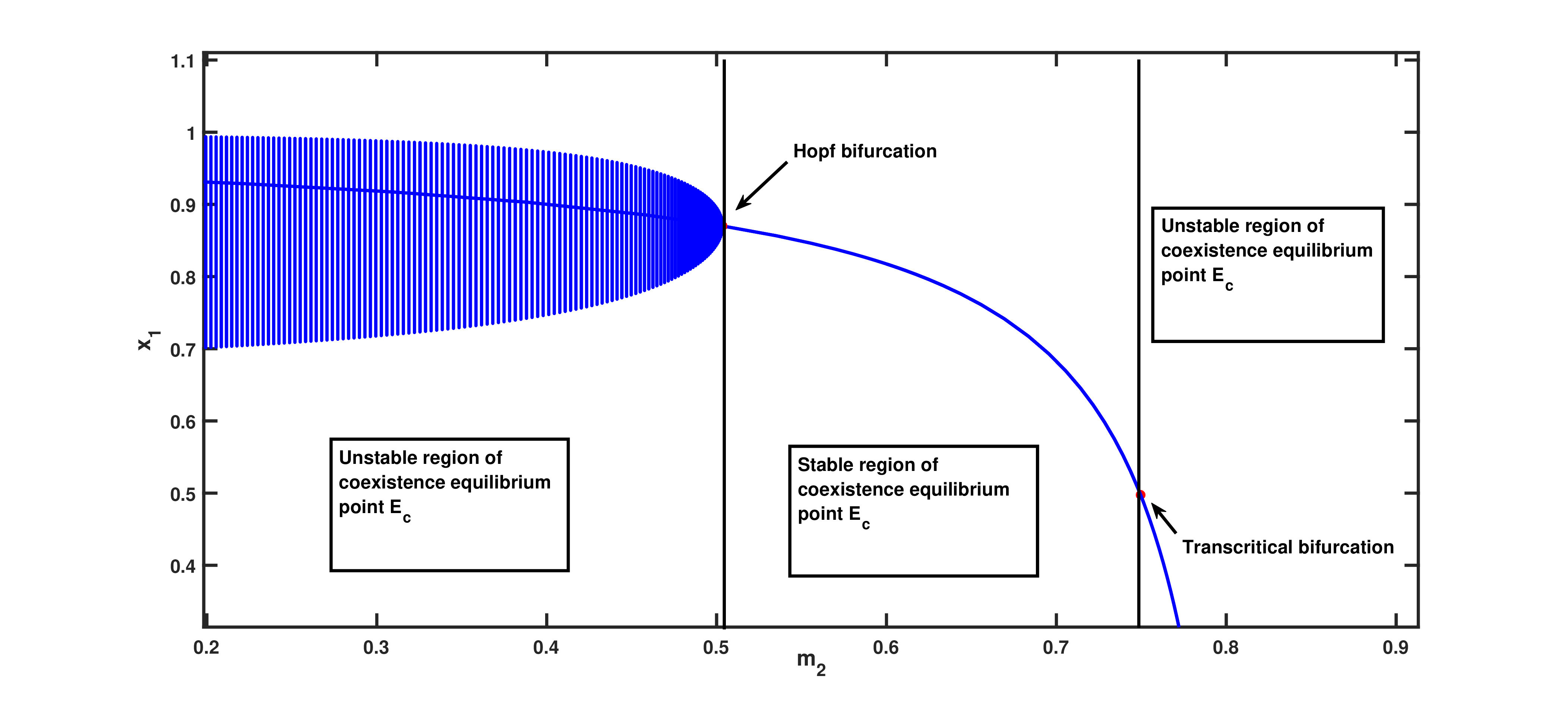}
         \caption{\emph{As the parameter $m_2$ is modified, a transcritical bifurcation and a Hopf bifurcation are observed.}}
         \label{m2equilibrium curve}
     \end{subfigure}
     \hfill
     \begin{subfigure}{0.45\textwidth}
         \centering
         \includegraphics[width=\textwidth]{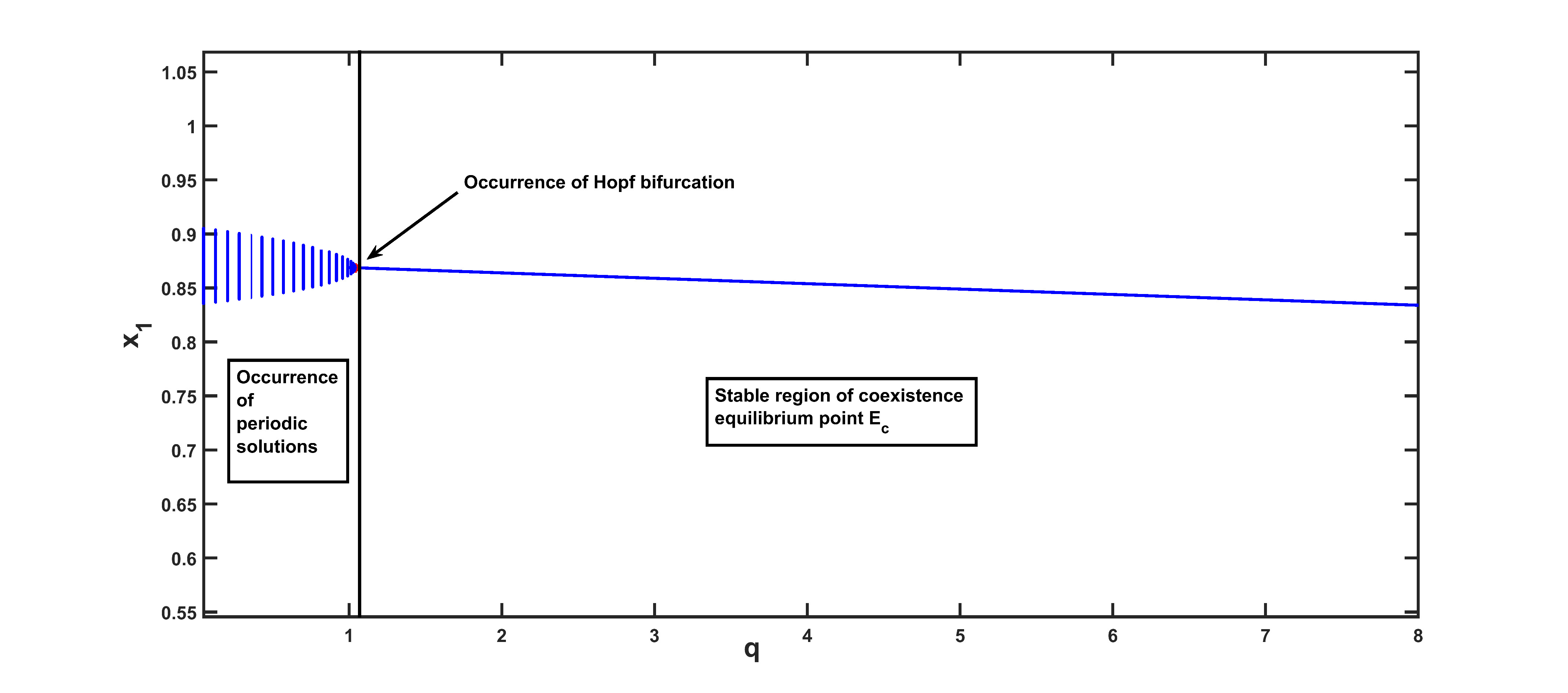}
         \caption{\emph{fluctuations in the population of the prey species are detected as a result of a Hopf bifurcation when the parameter $q$ is altered.}}
         \label{qequilibrium curve}
     \end{subfigure}
     \hfill
     \begin{subfigure}{0.45\textwidth}
         \centering
         \includegraphics[width=\textwidth]{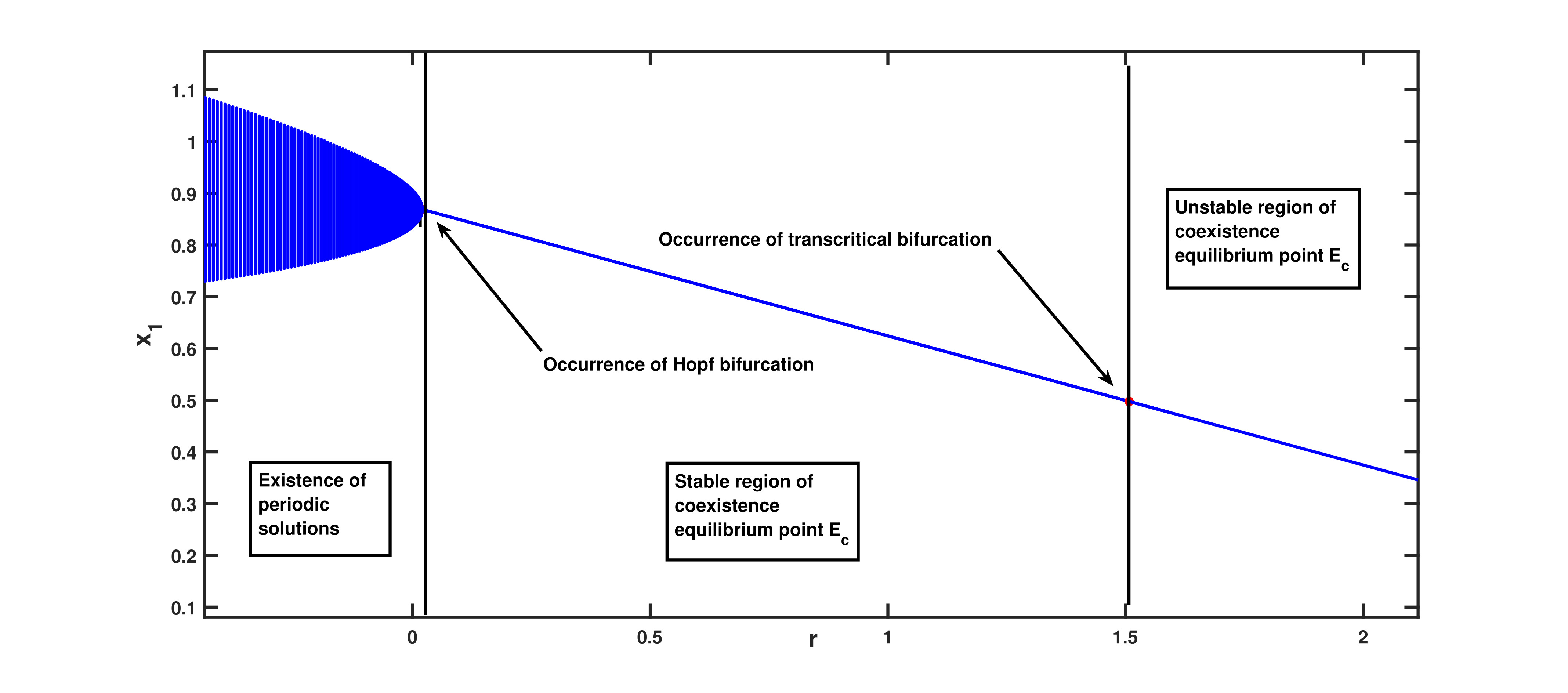}
         \caption{\emph{The variation of the parameter $r$ demonstrate the presence of a hopf bifurcation and a transcritical bifurcation in the system. }}
         \label{requilibrium curve}
     \end{subfigure}
     \hfill
     \begin{subfigure}{0.45\textwidth}
         \centering
         \includegraphics[width=\textwidth]{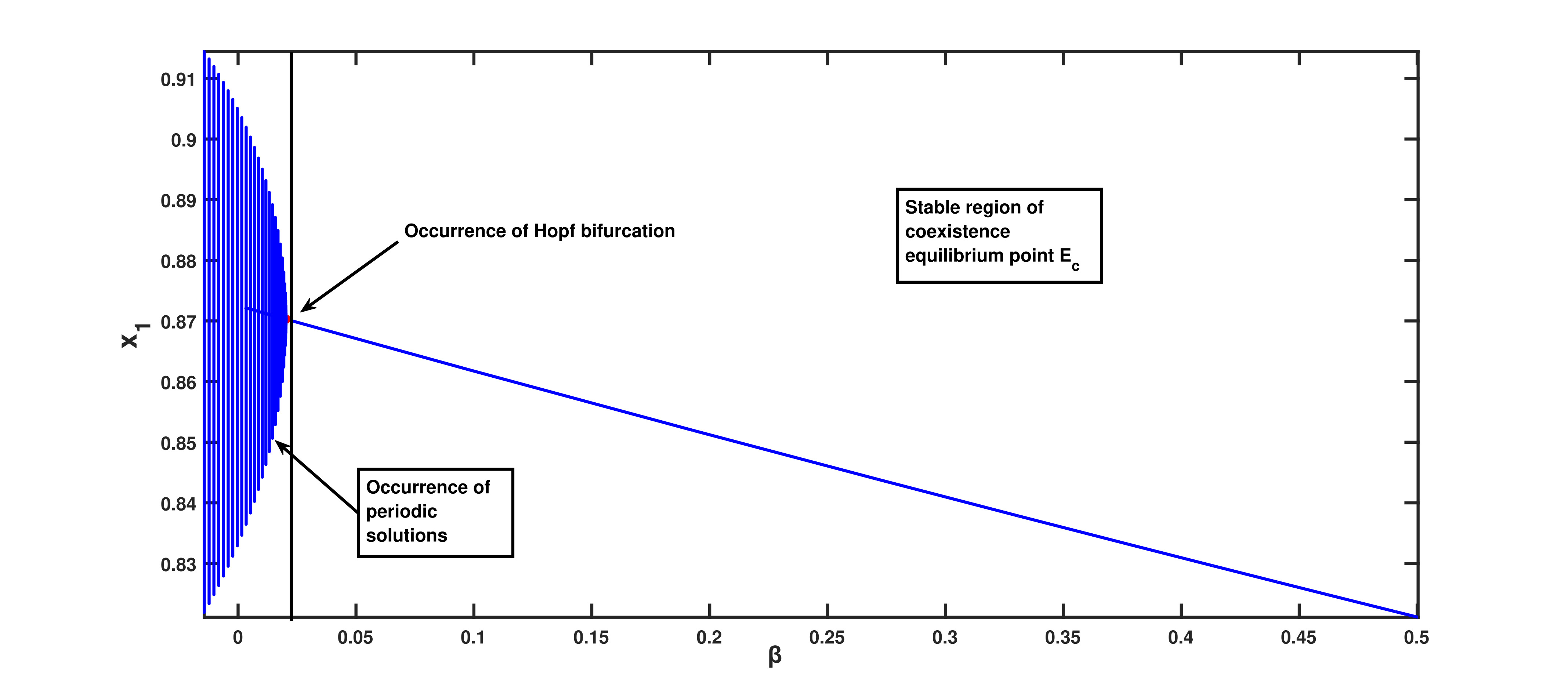}
         \caption{\emph{Occurrence of a hopf bifurcation is observed when the parameter $\beta$ is altered.}}
         \label{betaequilibrium curve}
     \end{subfigure}
       \caption{\emph{This diagram illustrates the existence of various bifurcations in relation to different parameters in the system (\ref{Final ode eq}). The set of parameter values which are used to generate these figures are as follows: $r_1 = 2$, $r_5 = 1$, $\beta = 0.01$, $m_1 = 0.5$, $m_2 = 0.5$ $d_1 = 0.25$, $r_2 = 1$, $d_2 = 0.5$, $r_4 = 3$, $b = 1$, $r_3 = 1$, $q = 0.5$, $r = 0.01$, and $\alpha=1$. }}
        \label{equicurve}
\end{figure}

\end{theorem}

\section{Numerical simulation}\label{ns} In the previous sections, we already established some analytical conclusions. In this section, we conduct a numerical investigation into the dynamics of the prey-predator systems (\ref{Final ode eq}) and (\ref{Frac eq}) and substantiate our analytical conclusions by utilizing some hypothetical parameter values. The analytical results obtained in previous sections are further supported by the utilisation of numerous figures generated by the use of the Mathematica and MATLAB software packages.

\begin{figure}[H]
     \centering
     \begin{subfigure}{0.45\textwidth}
         \centering
         \includegraphics[width=\textwidth]{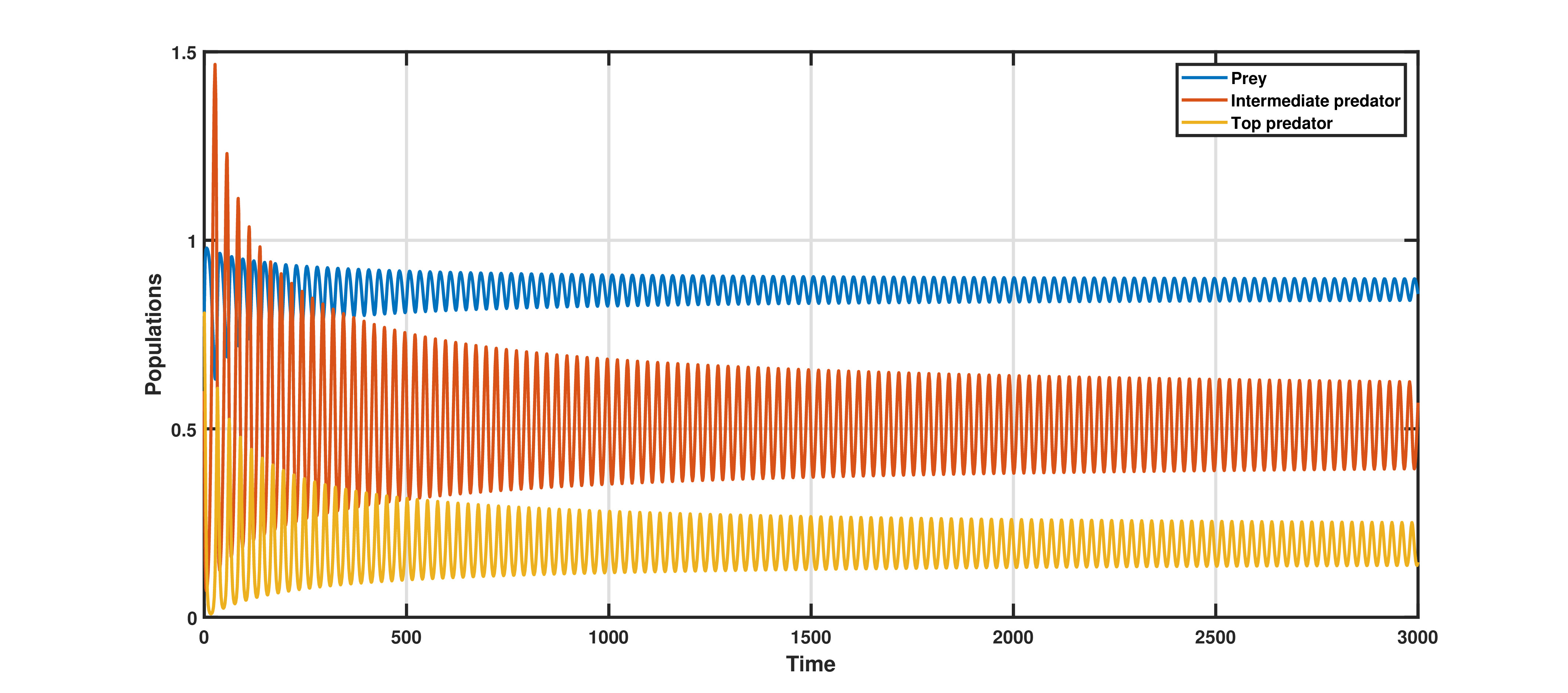}
         \caption{\emph{ Case (\romannum{1}): When $\alpha=1$}}
         \label{m10.5odets}
     \end{subfigure}
      \hfill
     \begin{subfigure}{0.45\textwidth}
         \centering
         \includegraphics[width=\textwidth]{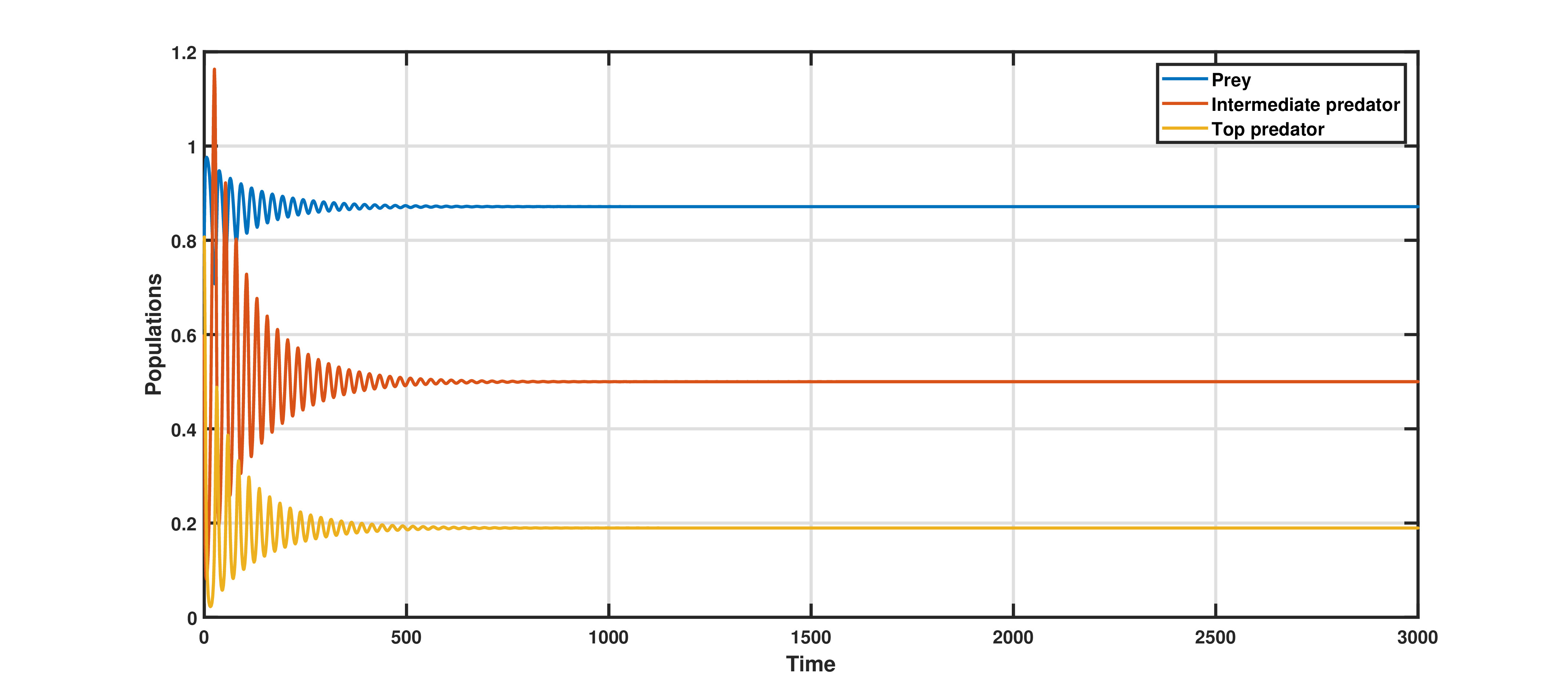}
         \caption{\emph{ Case (\romannum{2}): When $\alpha=0.98$}}
         \label{m10.5fde0.98ts}
     \end{subfigure}
     \hfill
     \begin{subfigure}{0.45\textwidth}
         \centering
         \includegraphics[width=\textwidth]{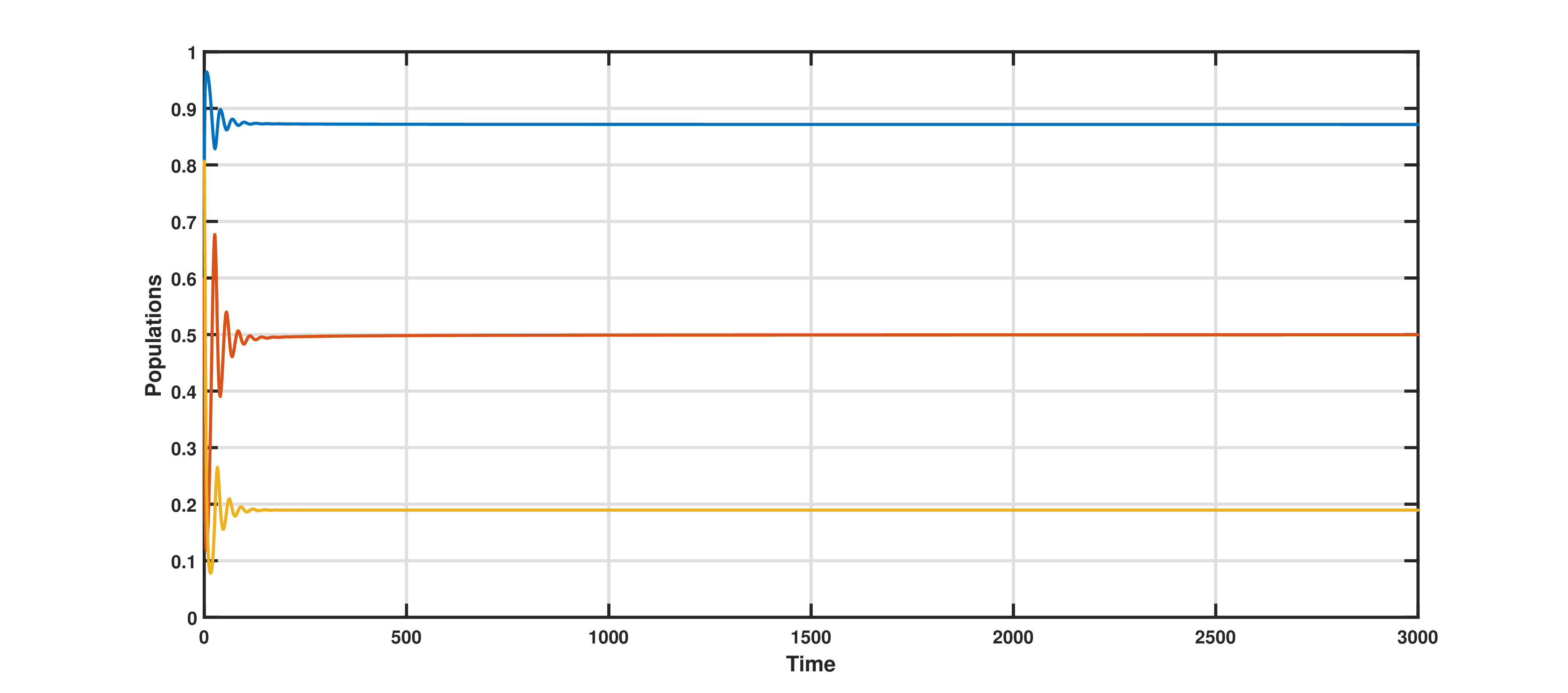}
         \caption{\emph{ Case (\romannum{3}): When $\alpha=0.90$}}
         \label{m10.5fde0.9ts}
     \end{subfigure}
     \hfill
     \begin{subfigure}{0.45 \textwidth}
         \centering
         \includegraphics[width=\textwidth]{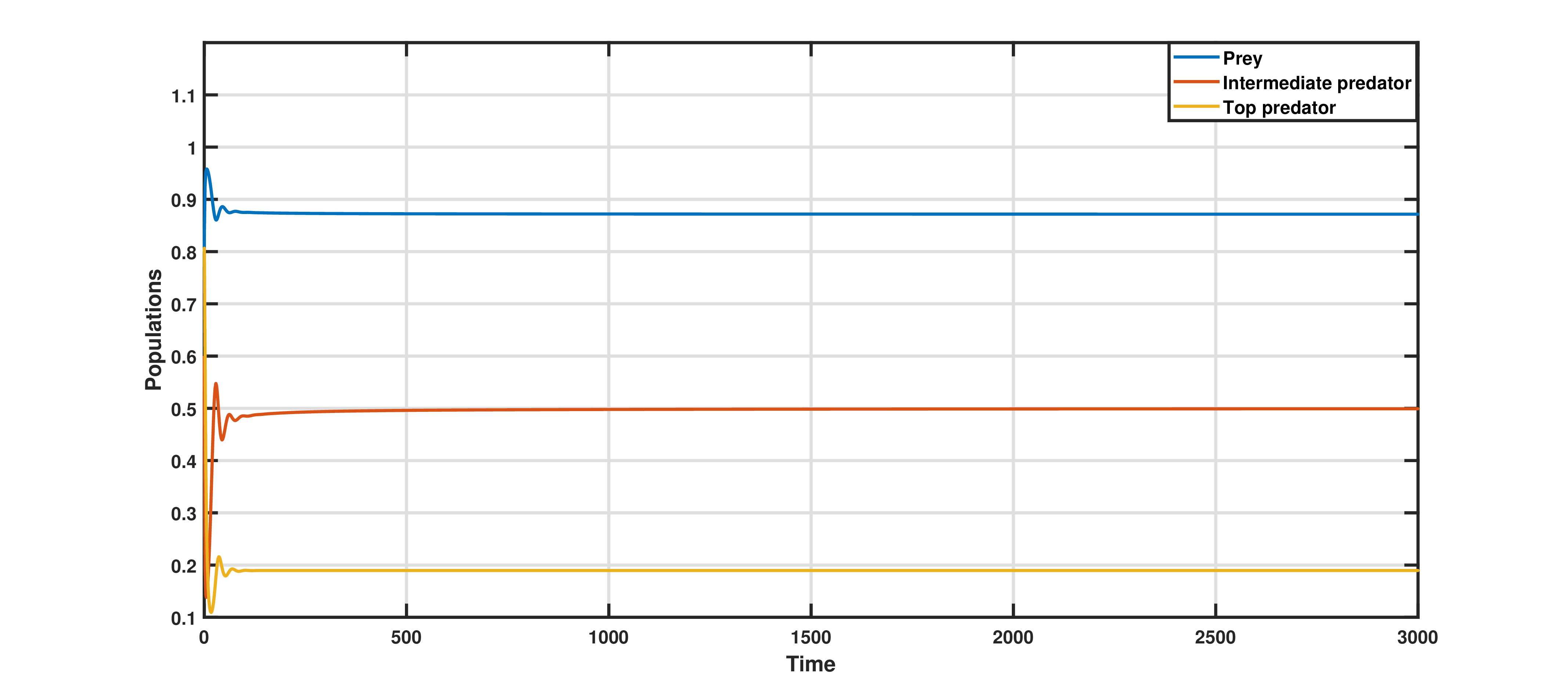}
         \caption{\emph{ Case (\romannum{4}): When $\alpha=0.85$}}
         \label{m10.5fde0.85ts}
     \end{subfigure}
       \caption{\emph{The impact of memory on the system is elegantly demonstrated. The following parameter values were used to generate these figures: $r_1=2$, $r_4 =3$,  $\beta = 0.01$, $m_1= 0.5$, $r_2 = 1$, $m_2 = 0.5$, $r_3 =1$, $d_1 = 0.25$, $d_2 = 0.5$, $b = 1$, $q=0.5$, $r_5 = 1$, and $r = 0.01$.}}
        \label{m1effectofmemory}
\end{figure}

%\subsection{In case of ODE system (\ref{Final ode eq})} 
\subsection{Numerical verification of the local stability of the equilibrium points} In this section, we employ numerical methods to confirm the local stability of the equilibrium points of the integer order system (\ref{Final ode eq}) and the fractional order system (\ref{Frac eq}) discussed in the preceding sections.

%\subsubsection{In case of ODE system (\ref{Final ode eq})} 
%At first, we will examine the local stability criteria of the equilibrium points of the system specified in the equation (\ref{Final ode eq}). In order to achieve this objective, we will examine some theoretical values of parameters. To verify the local stability of the vanishing equilibrium point $E_{v}$, we consider the parameter values: $r_1=0.46$, $r_2 = 0.32$, $r_3 = 0.5$, $r_4 = 0.002$, $r_5 = 0.38$, $\beta = 4.047$, $m_1= 0.72$, $m_2 = 0.17$, $d_1 = 0.096$, $d_2 = 0.279$, $b = 3.33$, $q=1.35$, and $r = 0.38$. The eigenvalues of the Jacobian matrix $J_{va}$ of the system (\ref{Final ode eq}) around the vanishing equilibrium point $E_{v}$ are $-0.096<0$, $-0.279<0$, and $-0.053<0$ for these parameter values. Therefore, the vanishing equilibrium point $E_{v}$ is locally asymptotically stable, as depicted in figure (\ref{lsode}). 

In order to validate the theorem (\ref{lsva}) numerically, we will examine the parameter values: $r_1 = 0.46$, $r_2 = 0.32$, $r_3 = 0.5$, $r_4 = 0.002$, $r_5 = 0.38$, $\beta = 4.047$, $m_1= 0.72$, $m_2 = 0.17$, $d_1 = 0.096$, $d_2 = 0.279$, $b = 3.33$, $q=1.35$, and $r = 0.38$. Given the parameter values, $r > 0.34=\frac{r_1}{q}$, i.e., the condition outlined in theorem (\ref{lsva}) is satisfied, resulting in the local stability of $E_{v}(0,0,0)$ for $0<\alpha\le1$. Figures (\ref{lsvafig}) and (\ref{lsode}) demonstrate this for $\alpha=0.98$ and $\alpha=1$, respectively. 
%In addition, the numerical conditions necessary for the local stability of the equilibrium point $E_{v}$, as stated in theorem (\ref{lsva}), are also met, with $\frac{r_1}{q}=0.34 < r$.
From an ecological standpoint, it has been observed that when the rate at which a prey species reproduces naturally becomes lower than the rate at which it is harvested, the prey species is at risk of extinction. Consequently, both the predator species that depend on the prey also face extinction. This leads to the establishment of a stable equilibrium point where all three species vanish. In addition, in this scenario, the presence of the axial equilibrium point becomes unattainable, mirroring the natural occurrence where the intrinsic growth rate of the prey species falls below the harvesting rate, rendering survival impossible for the prey. Therefore, in order for the prey to survive, its growth rate must exceed its harvesting rate. Therefore, in the previously mentioned bio-systems (\ref{Final ode eq}) and (\ref{Frac eq}), it is possible for all three species to become extinct under certain conditions.

%Moreover, the eigenvalues of the Jacobian matrix $J_{va}$ of the system (\ref{Frac eq}) around the vanishing equilibrium point $E_{v}$ are $-0.096<0$, $-0.279<0$, and $-0.053<0$ for these parameter values. Therefore, the vanishing equilibrium point $E_{v}$ is locally asymptotically stable, as depicted in figure (\ref{lsode}) for $\alpha=1$. 

By reducing the value of the parameter $q$ i.e., considering the following parameter values: $r_1=0.46$, $r_2 = 0.32$, $r_3 = 0.5$, $r_4 = 0.002$, $r_5 = 0.38$, $\beta = 4.047$, $m_1= 0.72$, $m_2 = 0.17$, $d_1 = 0.096$, $d_2 = 0.279$, $b = 3.33$, $q=0.838$, and $r = 0.38$, we can establish the local stability of the axial equilibrium point $E_{a}$. The eigenvalues of the Jacobian matrix $J_{ax}$ of the system (\ref{Final ode eq}) around the axial equilibrium point $E_{a}$ are -0.279, -0.141341, and -0.0651173. Since all of the eigenvalues are negative, the local stability of the axial equilibrium point is established. Furthermore, it meets all the numerical requirements stated in theorem (\ref{lsax}). Figures (\ref{lsaxfig}) and (\ref{lsode}) provides visual evidence of the local stability of the axial equilibrium point $E_{a}$ for $\alpha=0.98$ and $\alpha=1$, respectively. In ecological terms, this indicates that under specific circumstances, it is also possible for both the top predator and the intermediate predator species to face extinction while prey species continue to flourish.

On further reduction of the parameter $q$ to $q=0.019$ and keeping the other parameter values the same as previously mentioned, all the necessary conditions detailed in theorem (\ref{lsto}) are fulfilled.
%we obtain that $M_1=0.682878>0$, $M_2=0.140642>0$, $M_3=0.00780181>0$, and $M_1 M_2-M_3=0.0882396>0$. 
Based on the theorem (\ref{lsto}), it can be inferred that the top predator free equilibrium point $E_{t}$ is locally stable. The precise depiction can be found in figures (\ref{lstofig}) and (\ref{lsode}) for $\alpha=0.98$ and $\alpha=1$, respectively. From an ecological point of view, this indicates that solely the apex predator that confronts extinction, enabling both the intermediate predator and the prey to endure within the ecosystem, irrespective of the memory effect.

Now, in order to establish the local stability of the coexisting equilibrium point $E_{c}$, we examine the parameter values: $r_1=2$, $r_3 =1$, $\beta = 0.01$, $m_1= 0.5$, $r_2 = 1$, $m_2 = 0.6$, $d_1 = 0.25$, $d_2 = 0.5$, $r_4 =3$, $b = 1$, $r_5 = 1$, $q=0.5$, $r = 0.01$, and $\alpha=1$. For these parameter values, all the conditions outlined in theorem (\ref{lsco}) gets fulfilled and hence the coexisting equilibrium point $E_{c}$ is locally stable for $\alpha=1$. In a similar way, for certain parameter values: $r_1=2$, $r_3 =1$, $\beta = 4.047$, $r_2 = 1$, $m_1= 0.9471$, $m_2 = 0.17$, $d_1 = 0.25$, $r_4 =3$, $d_2 = 0.5$, $b = 1$, $r_5 = 1$, $q=0.5$, $r = 0.01$ and $\alpha=0.98$, we calculate the following values: $\Delta (\varrho)=0.0024>0$, $N_1=1.98>0$, $N_3=0.00002>0$, and $N_1N_2-N_3=0.05>0$. Therefore, all the conditions specified by theorem (\ref{lscofrac}) are met. Thus, the local asymptotic stability of the fixed point $E_{c}(C,D,E)$ in the system (\ref{Frac eq}) has been verified.
%we get the values $N_1=1.57033$, $N_2=0.0862411$, $N_3=0.0774249$, and $N_1 N_2-N_3=0.0580017$.  Since $N_1$, $N_2$, $N_3$, and $N_1 N_2-N_3$ are all positive, it can be concluded that the coexisting equilibrium point $E_{c}$ is locally stable based on theorem (\ref{lsco}). 
Figures (\ref{lsfracfig}) and (\ref{lsode}) perfectly illustrates the local stability of all the ecologically feasible equilibrium points, including the coexisting equilibrium point $E_{c}$ for $0<\alpha=0.98<1$ and $\alpha=1$, respectively. This implies that, given some parametric settings, the coexistence of all three species within the system is conceivable with or without memory effect.
\subsection{Effect of memory on the population dynamics (Impact of the parameter $\alpha$)} Memory in predator-prey systems pertains to the capacity of organisms to recollect previous encounters and adapt their behaviour or strategy in accordance with those recollections. In the past, ecological models have primarily concentrated on immediate interactions wherein predators react to the present availability of prey and vice versa. However, the inclusion of memory in predator-prey models offers an additional level of richness and realism. Following that, we have constructed the model (\ref{Frac eq}) to investigate the impact of memory on the bio-system (\ref{Final ode eq}). The fractional order $\alpha$ in the model (\ref{Frac eq}) represents the degree of memory that is influencing the system under investigation. As the fractional order $\alpha \to 0$, the system demonstrates a greater level of memory. On the other hand, as the value of $\alpha \to 1$, the system becomes increasingly devoid of memory \cite{mat,saeed}. In order to have a deeper comprehension of the impact of memory on the system (\ref{Frac eq}), we examine several scenarios by adjusting different parameter values. Figure (\ref{m1effectofmemory}) displays four distinct instances. In the first case, we discuss the situation in which the system (\ref{Frac eq}) has a value of $\alpha=1$ , which is equivalent to the system (\ref{Final ode eq}). The second instance involves considering the system (\ref{Frac eq}) with the value of $\alpha=0.98$. In the third scenario, the system (\ref{Frac eq}) is analysed with a value of $\alpha=0.9$, whereas in the fourth scenario, the system (\ref{Frac eq}) is studied with a value of $\alpha=0.85$. Figure (\ref{m1effectofmemory}) clearly demonstrates that when the order of the fractional derivative decreases, i.e., $\alpha \to 0$, the stability of the system under discussion near the coexisting equilibrium point $E_{c}$ improves.

\begin{figure}[H]
     \centering
     \begin{subfigure}{0.45\textwidth}
         \centering
         \includegraphics[width=\textwidth]{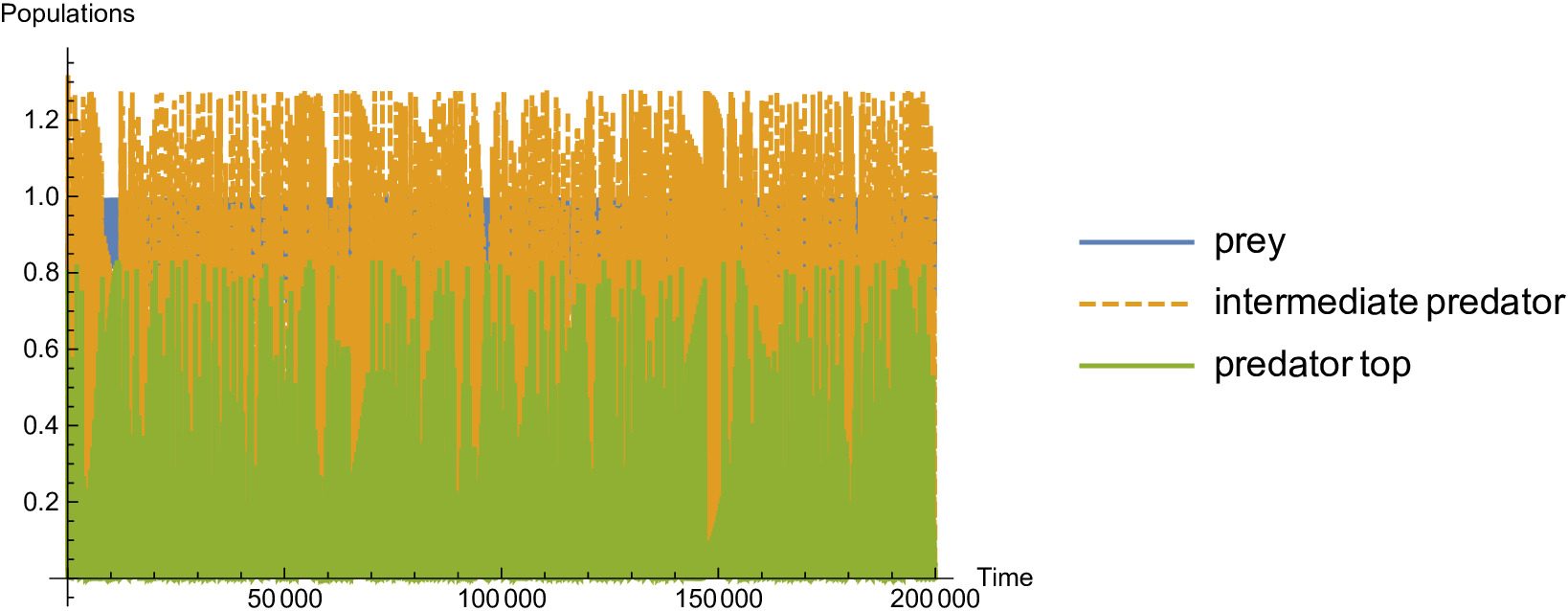}
         \caption{\emph{ Case (\romannum{1}): When $\alpha=1$}}
         \label{m20ode}
     \end{subfigure}
      \hfill
     \begin{subfigure}{0.45\textwidth}
         \centering
         \includegraphics[width=\textwidth]{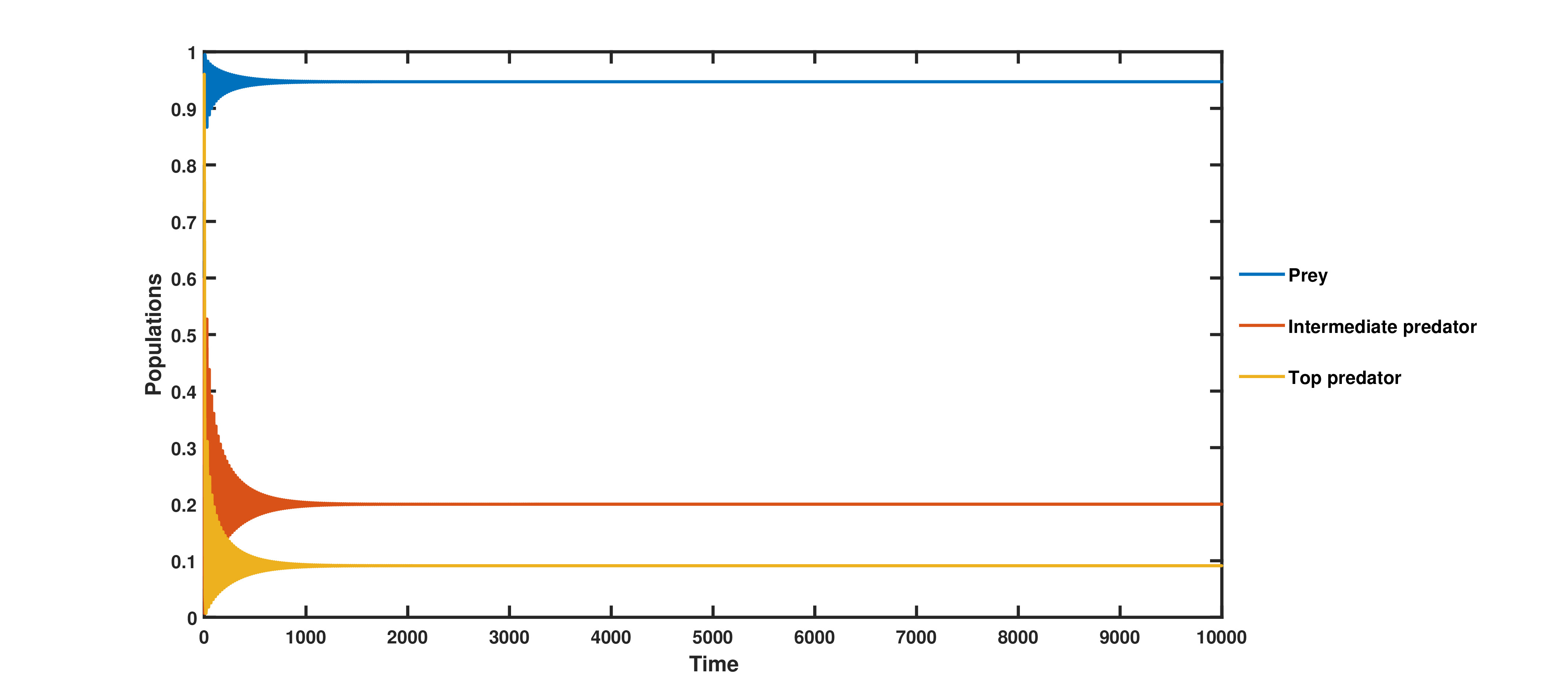}
         \caption{\emph{ Case (\romannum{2}): When $\alpha=0.98$}}
         \label{m20fde0.98}
     \end{subfigure}
     \hfill
     \begin{subfigure}{0.45\textwidth}
         \centering
         \includegraphics[width=\textwidth]{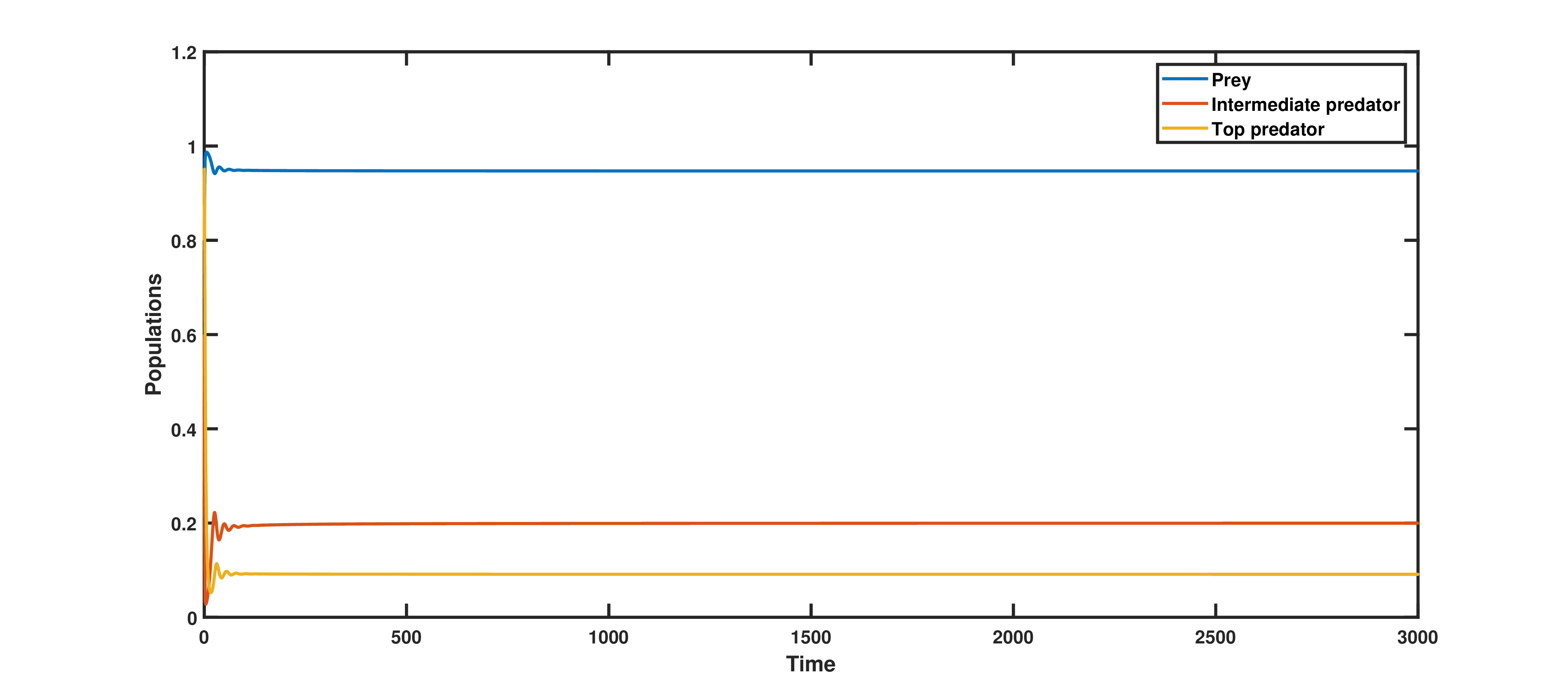}
         \caption{\emph{ Case (\romannum{3}): When $\alpha=0.90$}}
         \label{m20fde0.9}
     \end{subfigure}
     \hfill
     \begin{subfigure}{0.45 \textwidth}
         \centering
         \includegraphics[width=\textwidth]{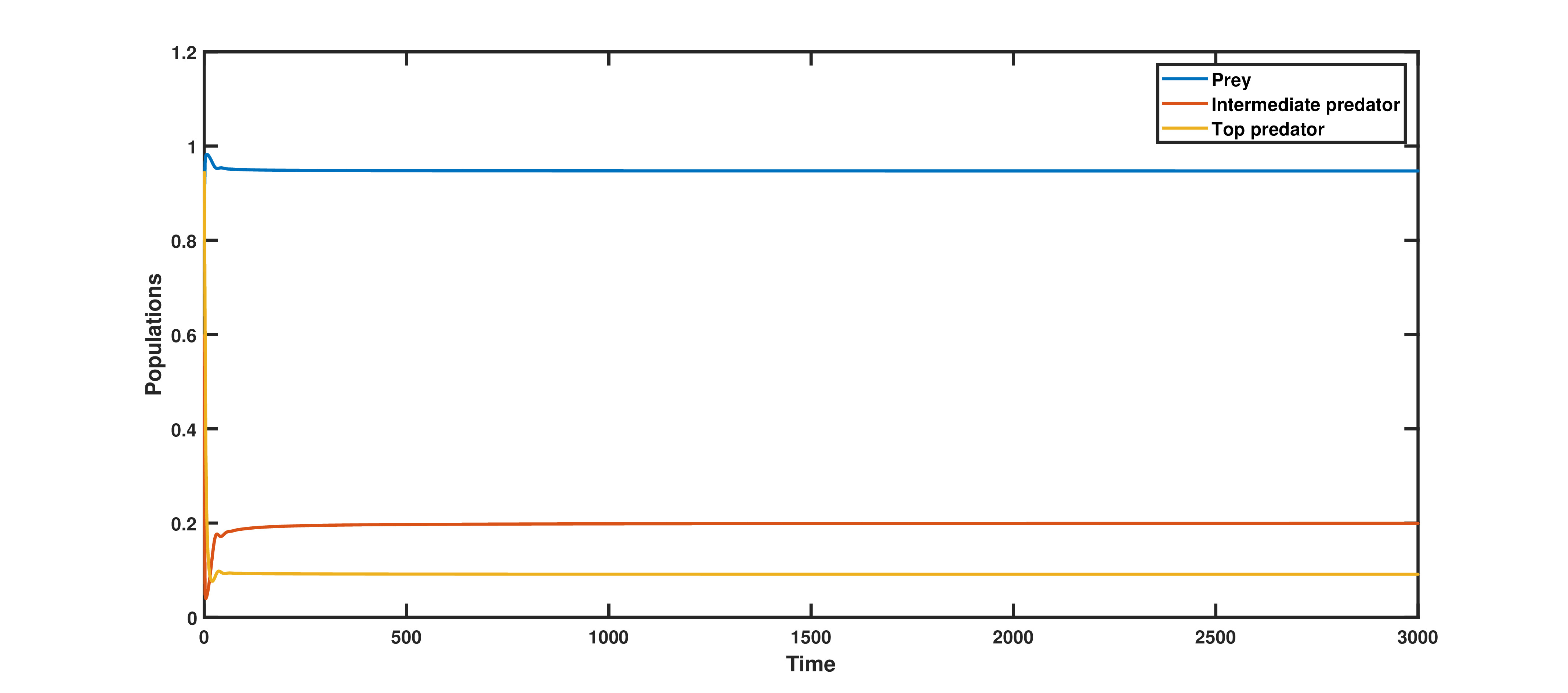}
         \caption{\emph{ Case (\romannum{4}): When $\alpha=0.85$}}
         \label{m20fde0.85}
     \end{subfigure}
       \caption{\emph{The impact of memory in the system is readily apparent in these figures. The parameters used here are: $r_1=2$, $r_5 = 1$, $\beta = 0.01$, $r_3 =1$, $m_1= 0.5$, $r_2 = 1$, $m_2 = 0$, $d_1 = 0.25$, $r_4 =3$, $d_2 = 0.5$, $b = 1$, $q=0.5$, and $r = 0.01$}}
        \label{m2=0effectofmemory}
\end{figure}

\begin{figure}[H]
     \centering
     \begin{subfigure}{0.45\textwidth}
         \centering
         \includegraphics[width=\textwidth]{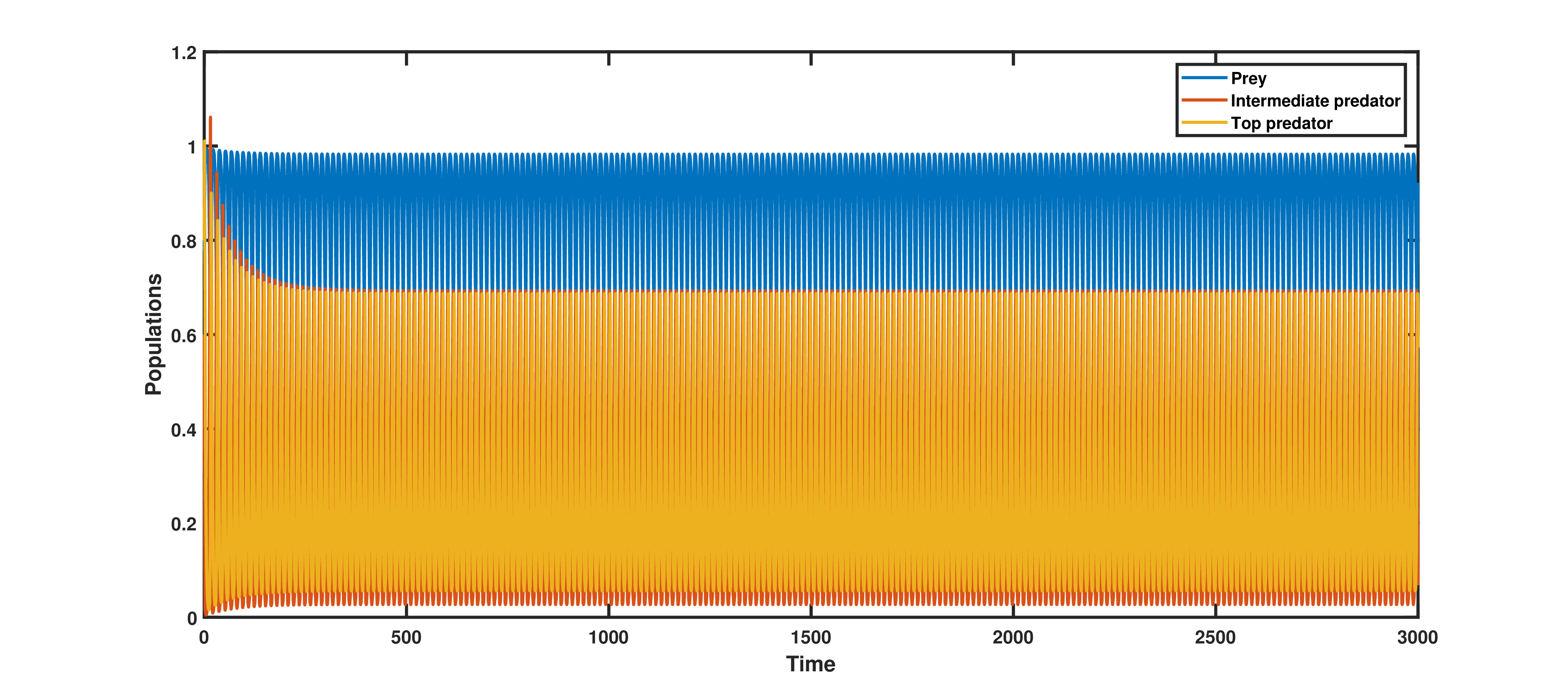}
         \caption{\emph{ Case (\romannum{1}): When $\alpha=1$}}
         \label{m1=0=m2ODE}
     \end{subfigure}
      \hfill
     \begin{subfigure}{0.45\textwidth}
         \centering
         \includegraphics[width=\textwidth]{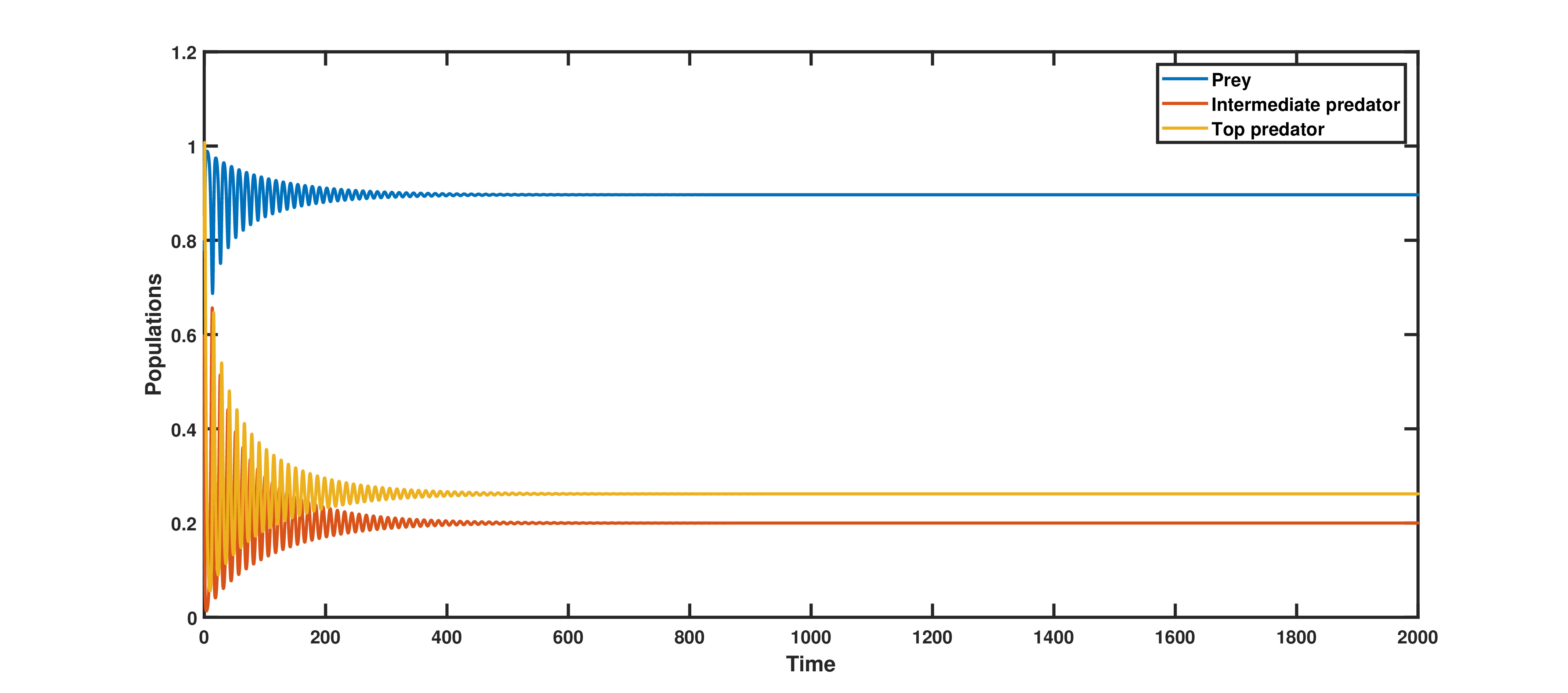}
         \caption{\emph{Case (\romannum{2}): When $\alpha=0.98$}}
         \label{m1=0=m2FDE0.98}
     \end{subfigure}
     \hfill
     \begin{subfigure}{0.45\textwidth}
         \centering
         \includegraphics[width=\textwidth]{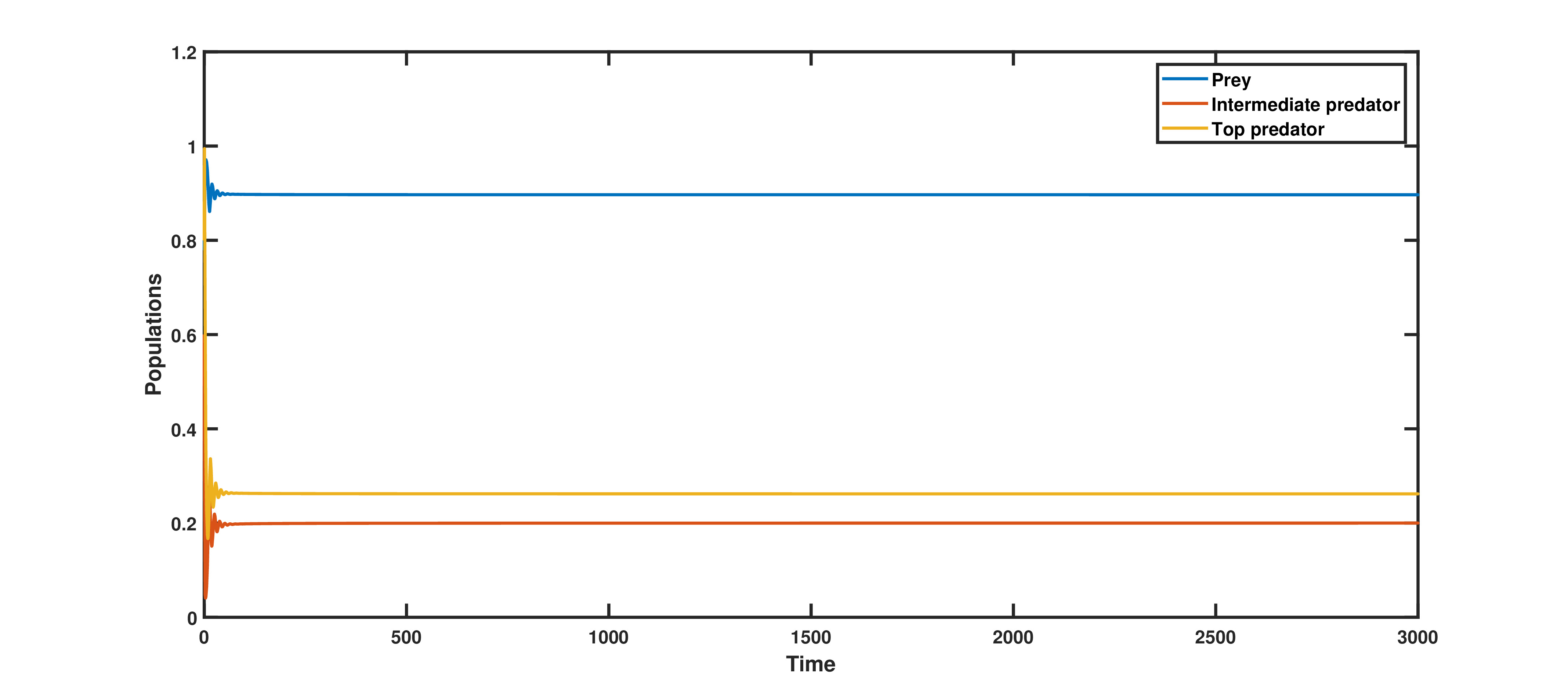}
         \caption{\emph{Case (\romannum{3}): When $\alpha=0.90$}}
         \label{m1=0=m2FDE0.9}
     \end{subfigure}
     \hfill
     \begin{subfigure}{0.45 \textwidth}
         \centering
         \includegraphics[width=\textwidth]{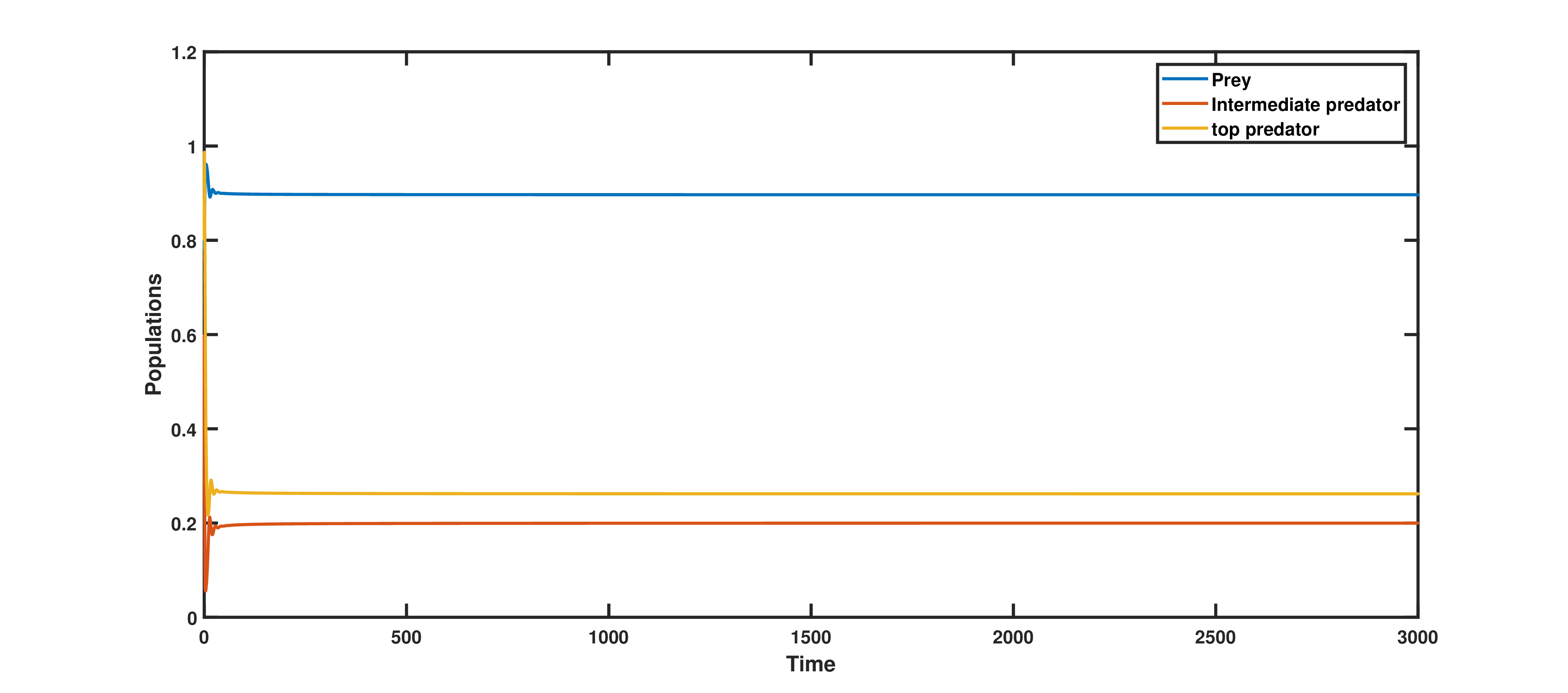}
         \caption{\emph{Case (\romannum{4}): When $\alpha=0.85$}}
         \label{m1=0=m2FDE0.85}
     \end{subfigure}
       \caption{\emph{The figures depict the impact of memory on the system in the absence of refuge behaviour. The parameter values used are as follows: $r_1=2$, $r_5 = 1$, $\beta = 0.01$, $r_2 = 1$, $m_1= 0$, $m_2 = 0$, $d_1 = 0.25$, $r_4 =3$, $d_2 = 0.5$, $b = 1$, $r_3 =1$, $q=0.5$, and $r = 0.01$.}}
        \label{m1=0=m2effectofmemory}
\end{figure}

\begin{figure}[H]
     \centering
     \begin{subfigure}{0.45\textwidth}
         \centering
         \includegraphics[width=\textwidth]{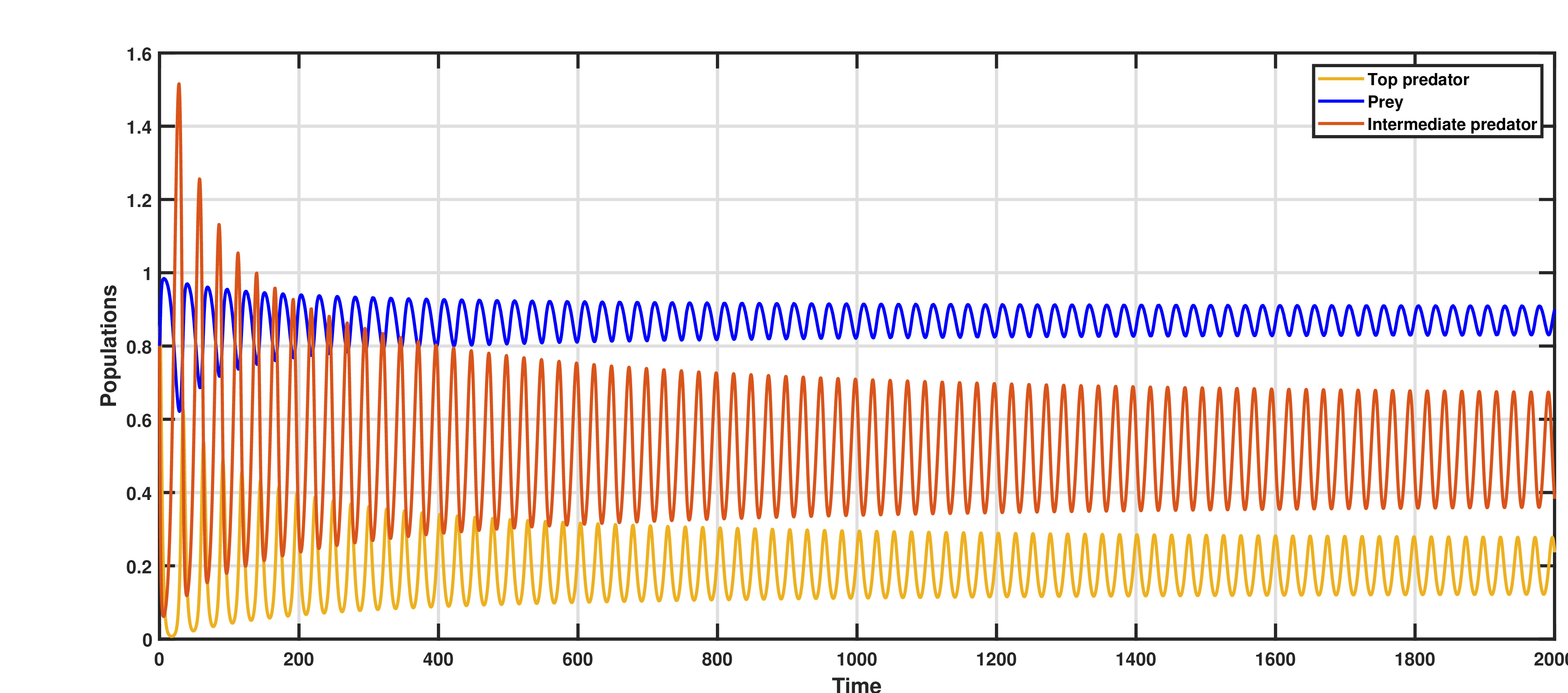}
         \caption{\emph{Case (\romannum{1}): When $\alpha=1$}}
         \label{q=r=0ode}
     \end{subfigure}
      \hfill
     \begin{subfigure}{0.45\textwidth}
         \centering
         \includegraphics[width=\textwidth]{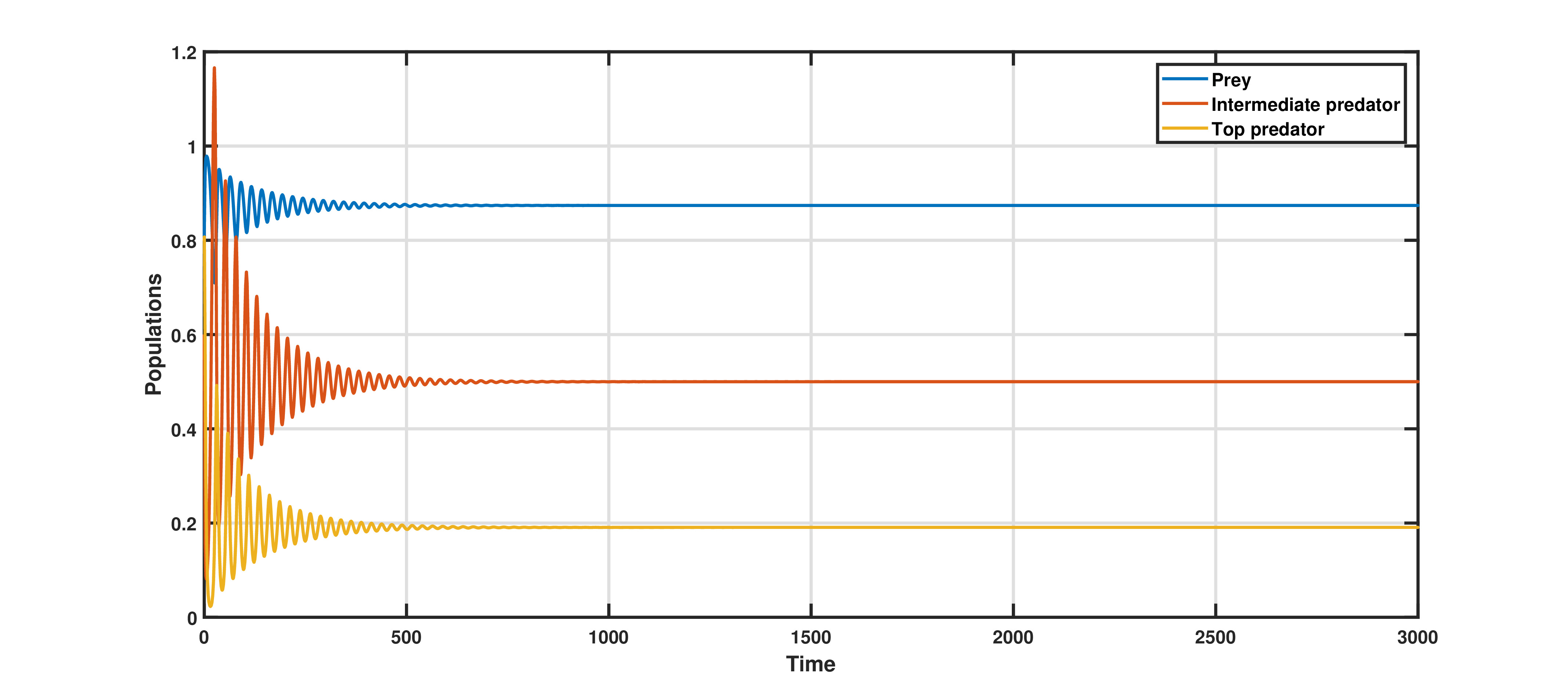}
         \caption{\emph{Case (\romannum{2}): When $\alpha=0.98$}}
         \label{q=r=0fde0.98}
     \end{subfigure}
     \hfill
     \begin{subfigure}{0.45\textwidth}
         \centering
         \includegraphics[width=\textwidth]{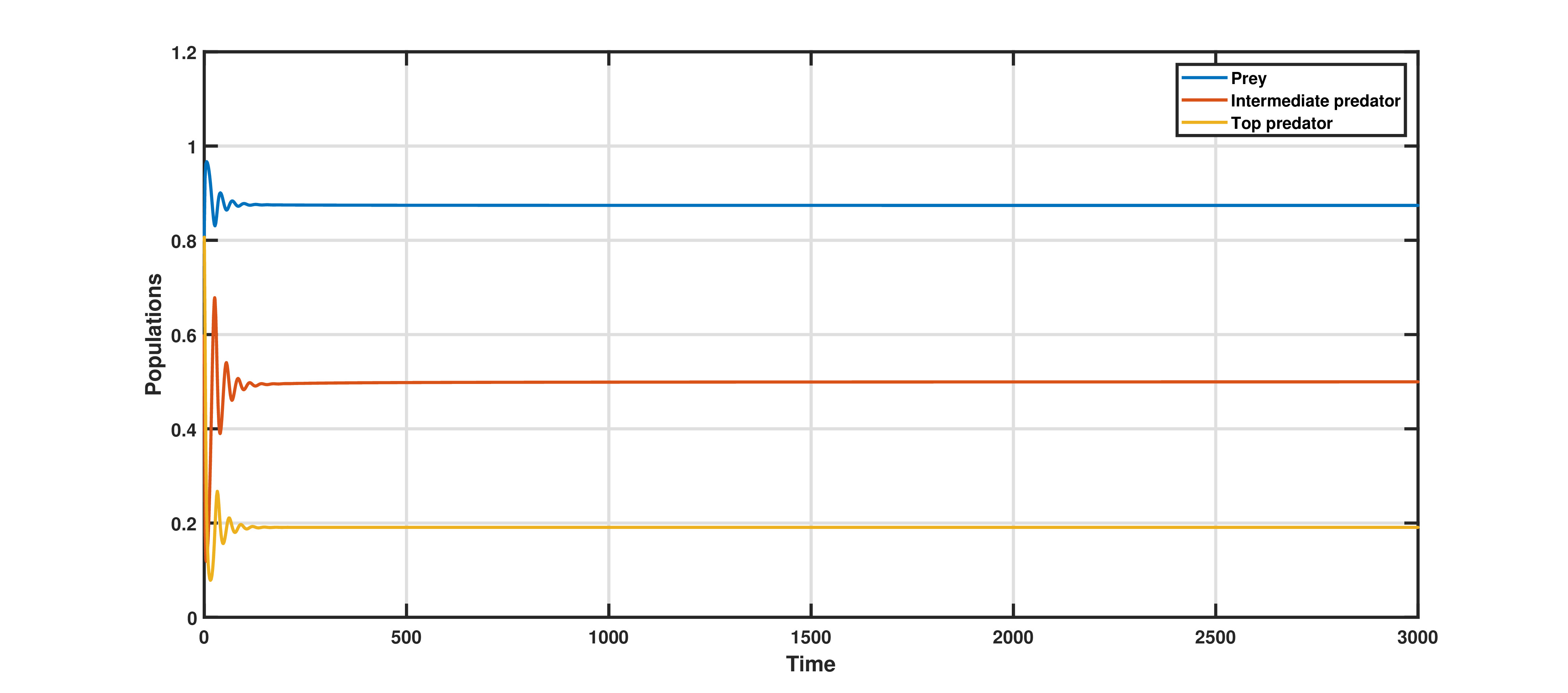}
         \caption{\emph{ Case (\romannum{3}): When $\alpha=0.90$}}
         \label{q=r=0fde0.9}
     \end{subfigure}
     \hfill
     \begin{subfigure}{0.45 \textwidth}
         \centering
         \includegraphics[width=\textwidth]{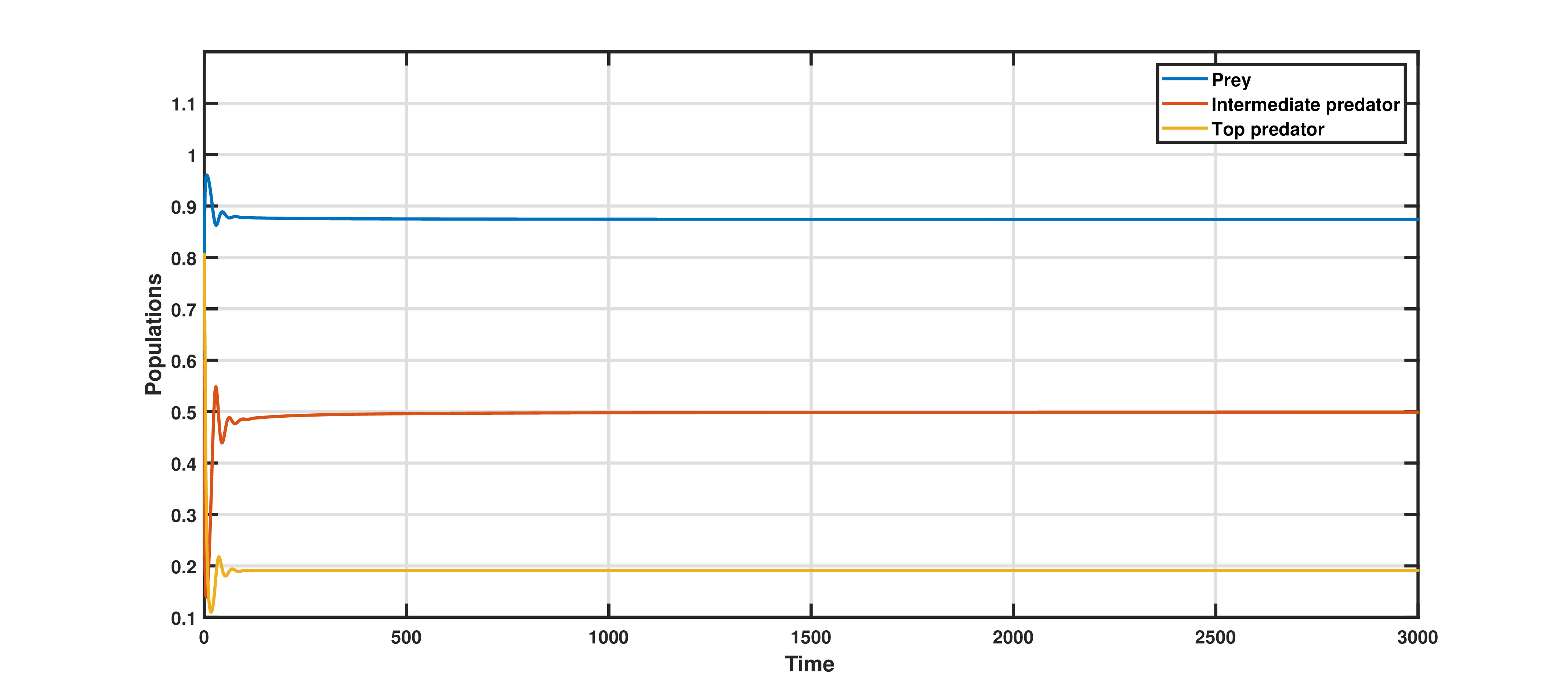}
         \caption{\emph{ Case (\romannum{4}): When $\alpha=0.85$}}
         \label{q=r=0fde0.85}
     \end{subfigure}
       \caption{\emph{In order to generate these figures, we have utilised specific parameter values. These values include: $r_1=2$, $r_3 =1$, $r_5 = 1$, $\beta = 0.01$, $m_1= 0.5$, $r_2 = 1$, $m_2 = 0.5$, $d_1 = 0.25$,  $r_4 =3$, $d_2 = 0.5$, $b = 1$, $q=0$, and $r = 0$. The long-term influence of memory on the system without harvesting is noticed.}}
        \label{q=r=0memoryeffect}
\end{figure}

\begin{figure}[H]
     \centering
     \begin{subfigure}{0.45\textwidth}
         \centering
         \includegraphics[width=\textwidth]{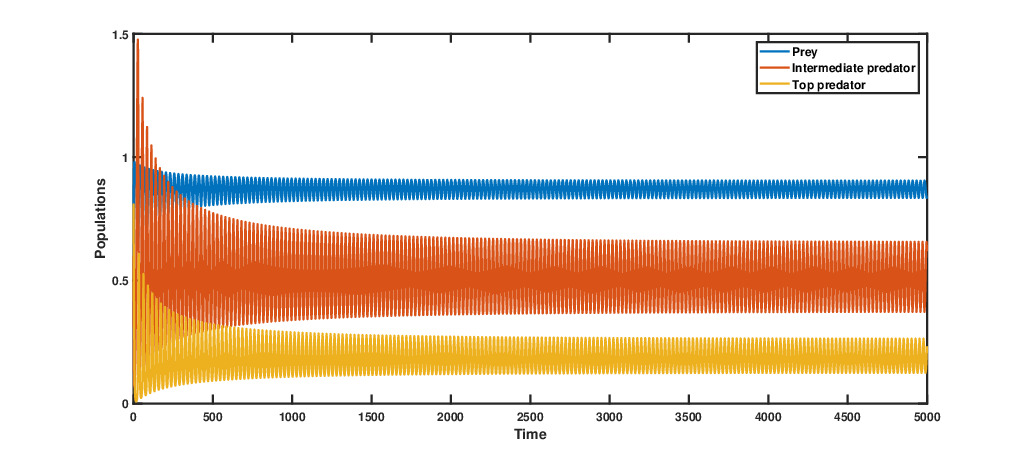}
         \caption{\emph{ Case (\romannum{1}): When $\alpha=1$}}
         \label{b2=0ode}
     \end{subfigure}
      \hfill
     \begin{subfigure}{0.45\textwidth}
         \centering
         \includegraphics[width=\textwidth]{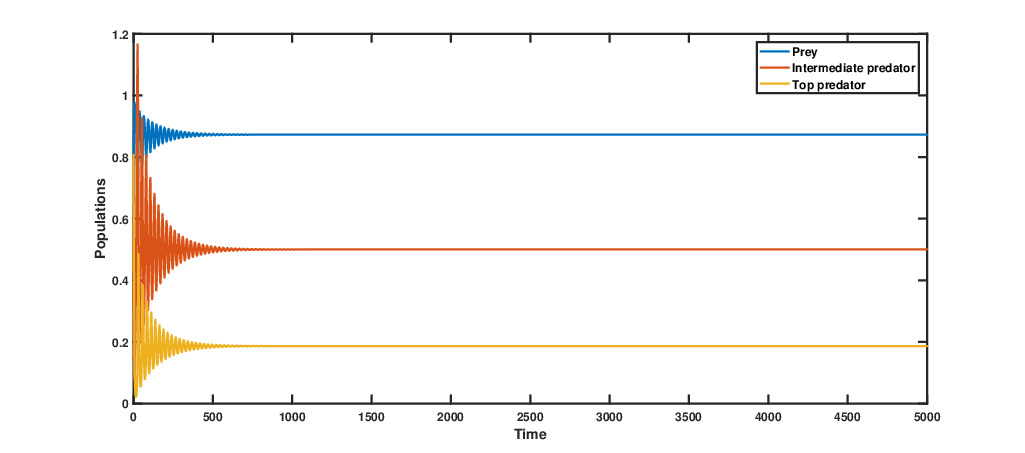}
         \caption{\emph{ Case (\romannum{2}): When $\alpha=0.98$}}
         \label{b2=0fde0.98}
     \end{subfigure}
     \hfill
     \begin{subfigure}{0.45\textwidth}
         \centering
         \includegraphics[width=\textwidth]{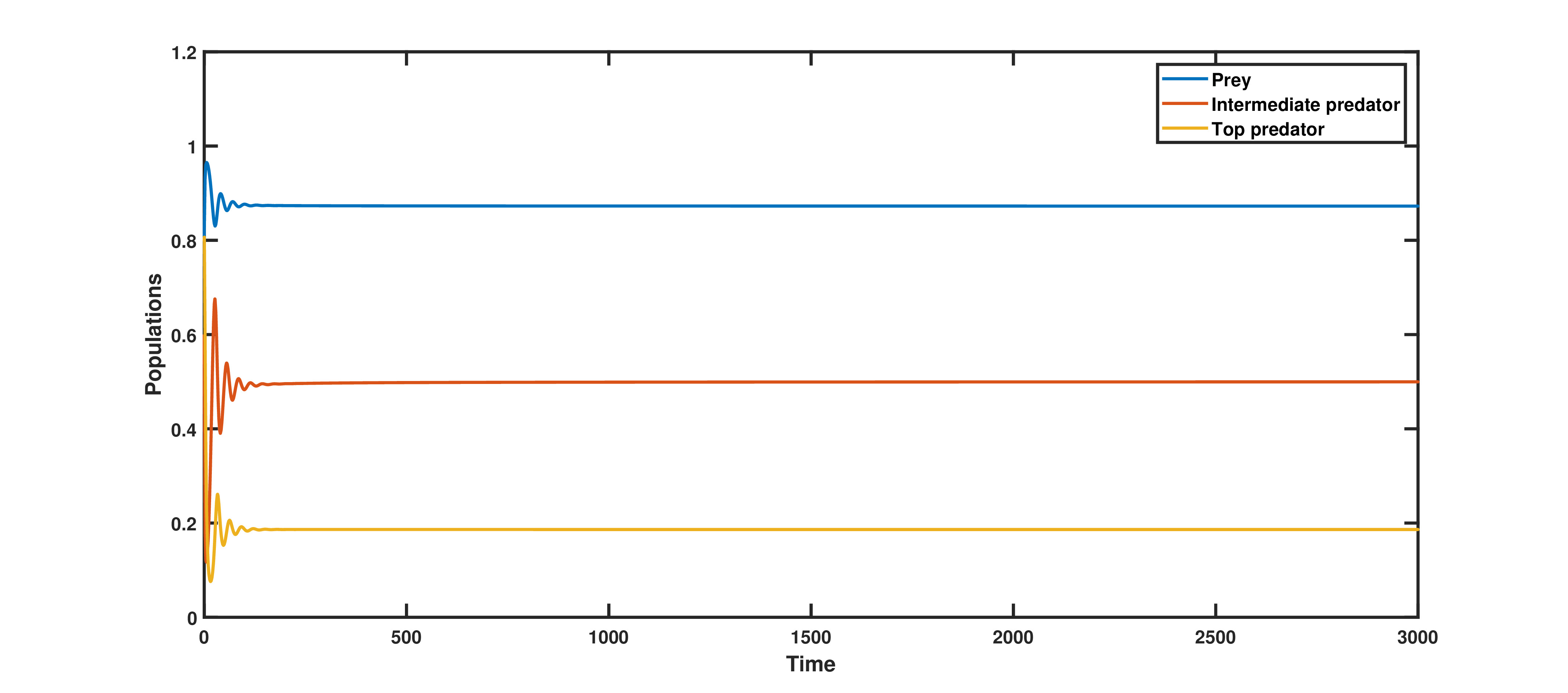}
         \caption{\emph{ Case (\romannum{3}): When $\alpha=0.90$}}
         \label{b2=0fde0.9}
     \end{subfigure}
     \hfill
     \begin{subfigure}{0.45 \textwidth}
         \centering
         \includegraphics[width=\textwidth]{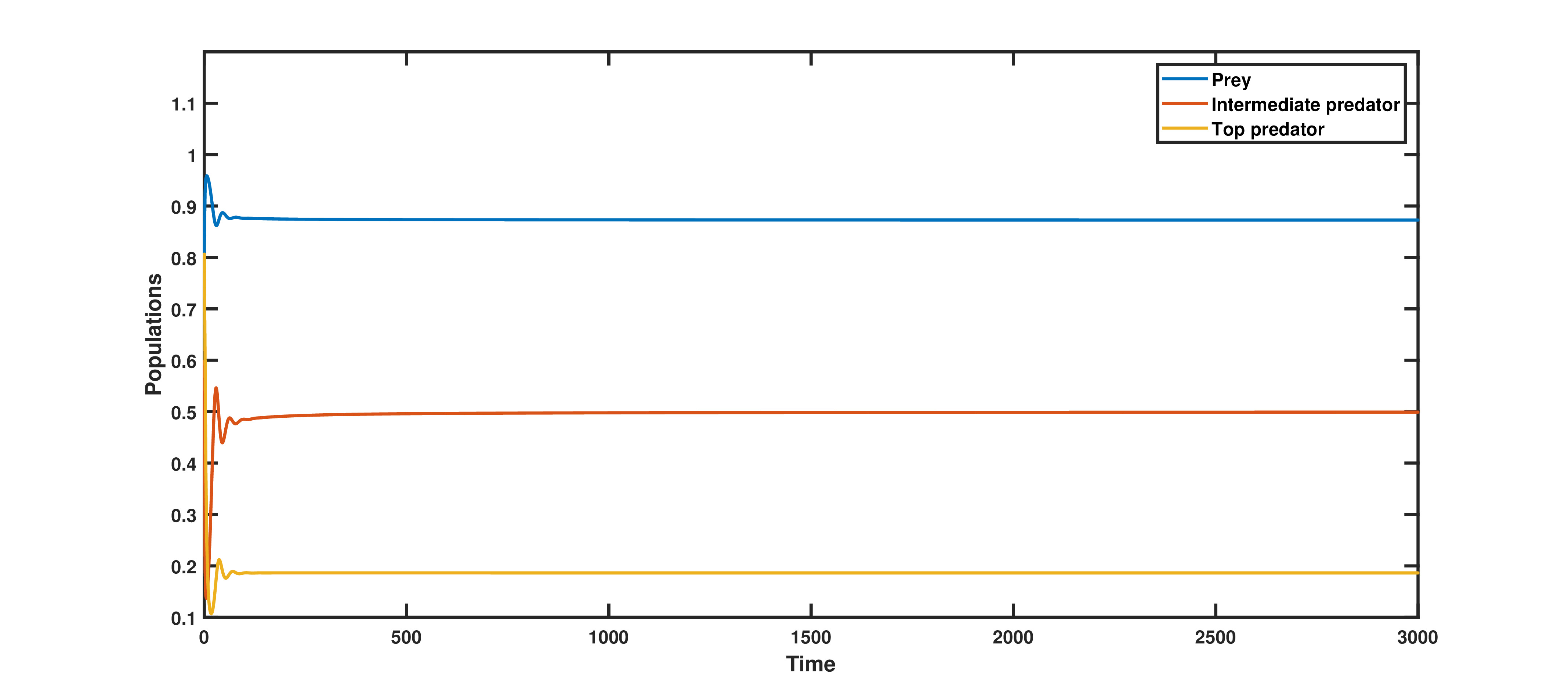}
         \caption{\emph{ Case (\romannum{4}): When $\alpha=0.85$}}
         \label{b2=0fde0.85}
     \end{subfigure}
       \caption{\emph{The long term effect of memory on the system without the prey odour effect is clearly visible from these figures. The parameter values used to generate these figures are: $r_1=2$, $r_5 = 1$, $\beta = 0$, $m_1= 0.5$, $r_2 = 1$, $m_2 = 0.5$, $d_1 = 0.25$, $d_2 = 0.5$, $r_4 =3$, $b = 1$, $q=0.5$, $r_3 =1$, and $r = 0.01$}}
        \label{b2=0memoryeffect}
\end{figure}

\begin{figure}[H]
     \centering
     \begin{subfigure}{0.45\textwidth}
         \centering
         \includegraphics[width=\textwidth]{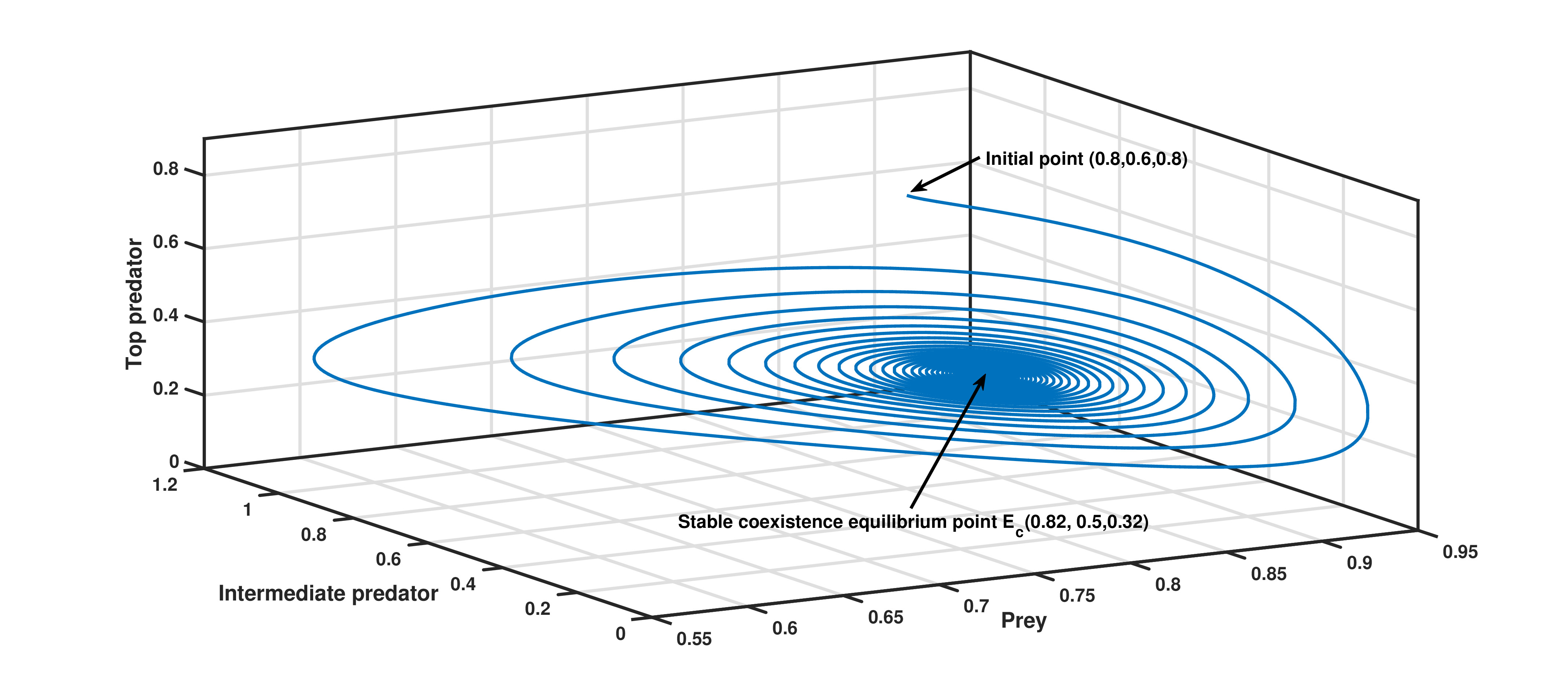}
         \caption{\emph{ This diagram illustrates the state immediately prior to the onset of a Hopf bifurcation at $m_1=0.4498$. This figure was generated using a parameter value of $m_1=0.3$, along with other mentioned parameter values. }}
         \label{m1 0.3}
     \end{subfigure}
      \hfill
     \begin{subfigure}{0.45\textwidth}
         \centering
         \includegraphics[width=\textwidth]{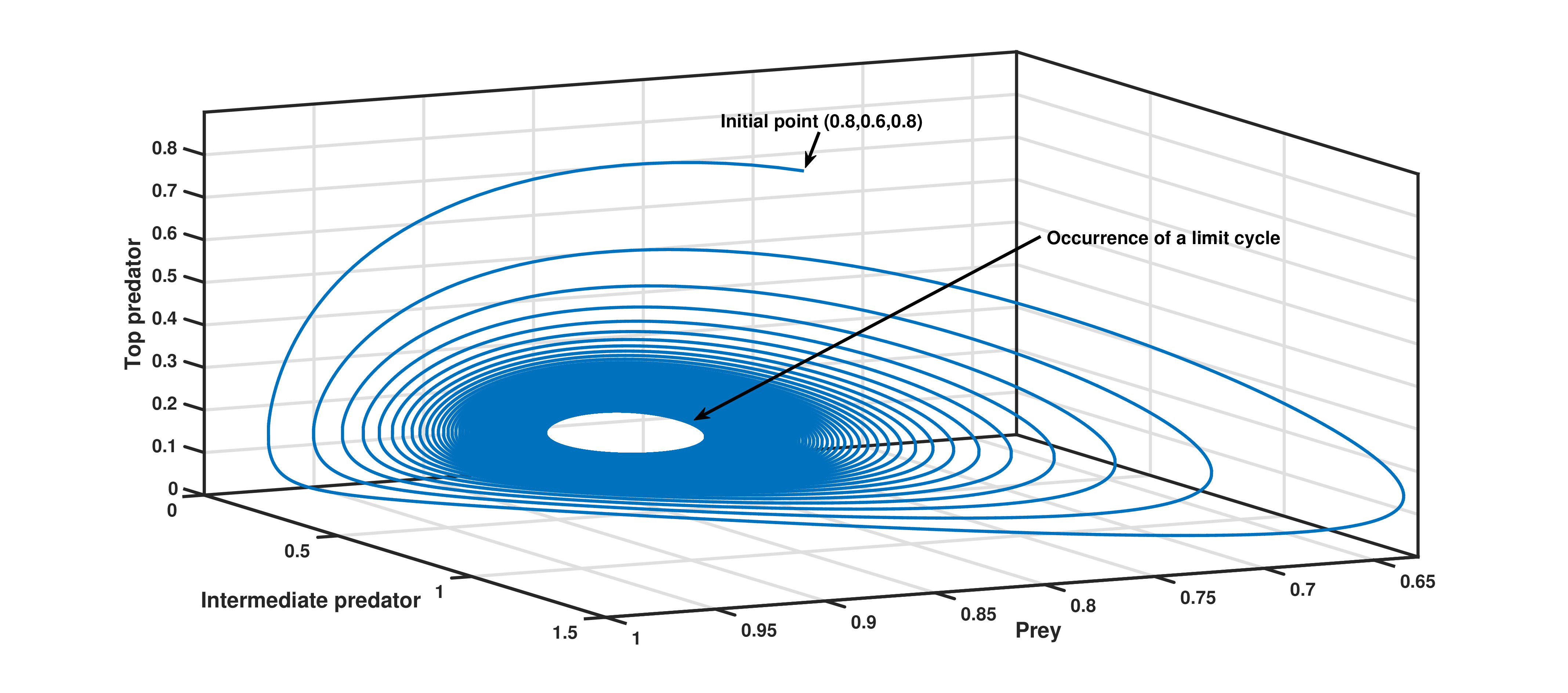}
         \caption{\emph{ The illustration depicts the situation right after the onset of a Hopf bifurcation at $m_1=0.4498$. A parameter value of $m_1=0.5$, in addition to the other parameter values mentioned below was employed to generate this figure.}}
         \label{m1 0.5}
     \end{subfigure}
     \hfill
     \begin{subfigure}{0.45\textwidth}
         \centering
         \includegraphics[width=\textwidth]{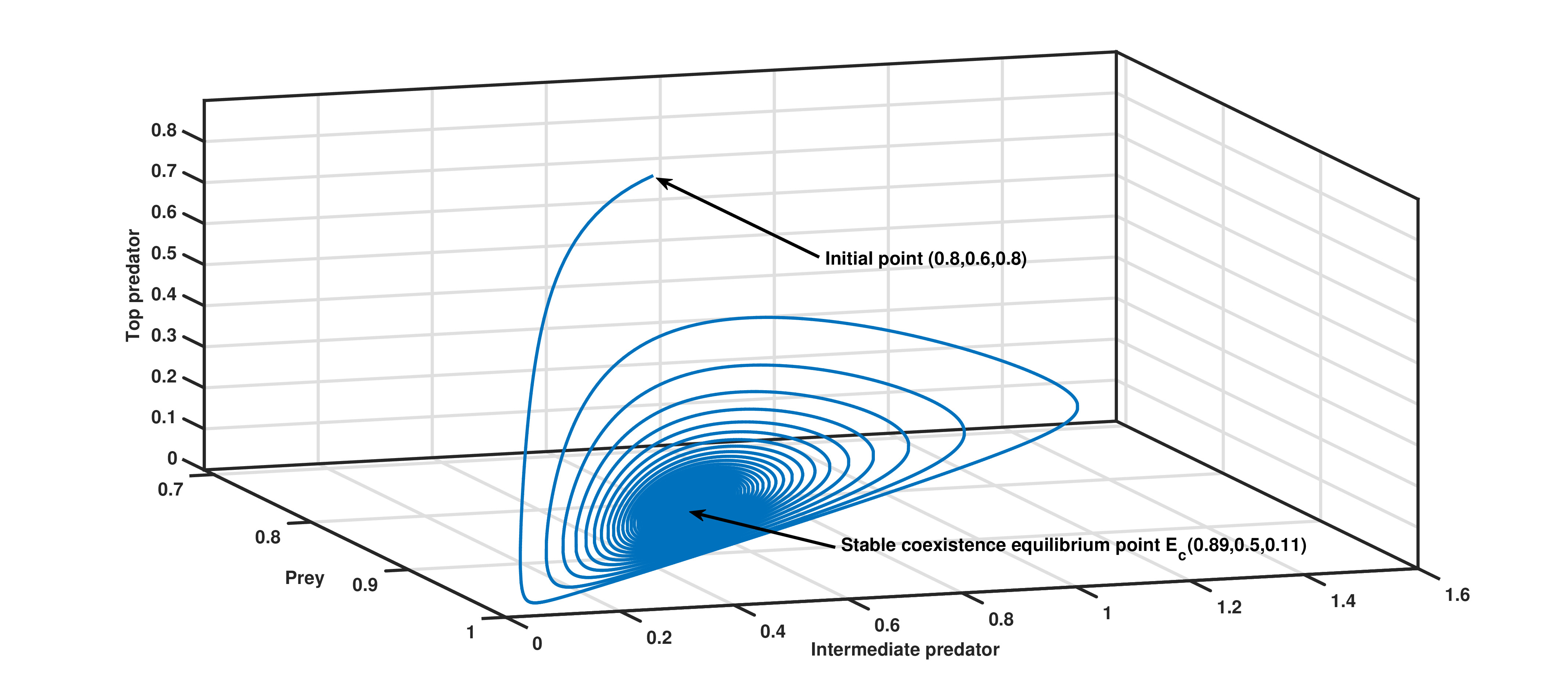}
         \caption{\emph{ This figure portrays the scenario immediately following the emergence of the second Hopf bifurcation at $m_1=0.5295$. This figure was generated using a parameter value of $m_1=0.6$, along with the other below mentioned parameter values.}}
         \label{m1 0.6}
     \end{subfigure}
     \hfill
     \begin{subfigure}{0.4 \textwidth}
         \centering
         \includegraphics[width=\textwidth]{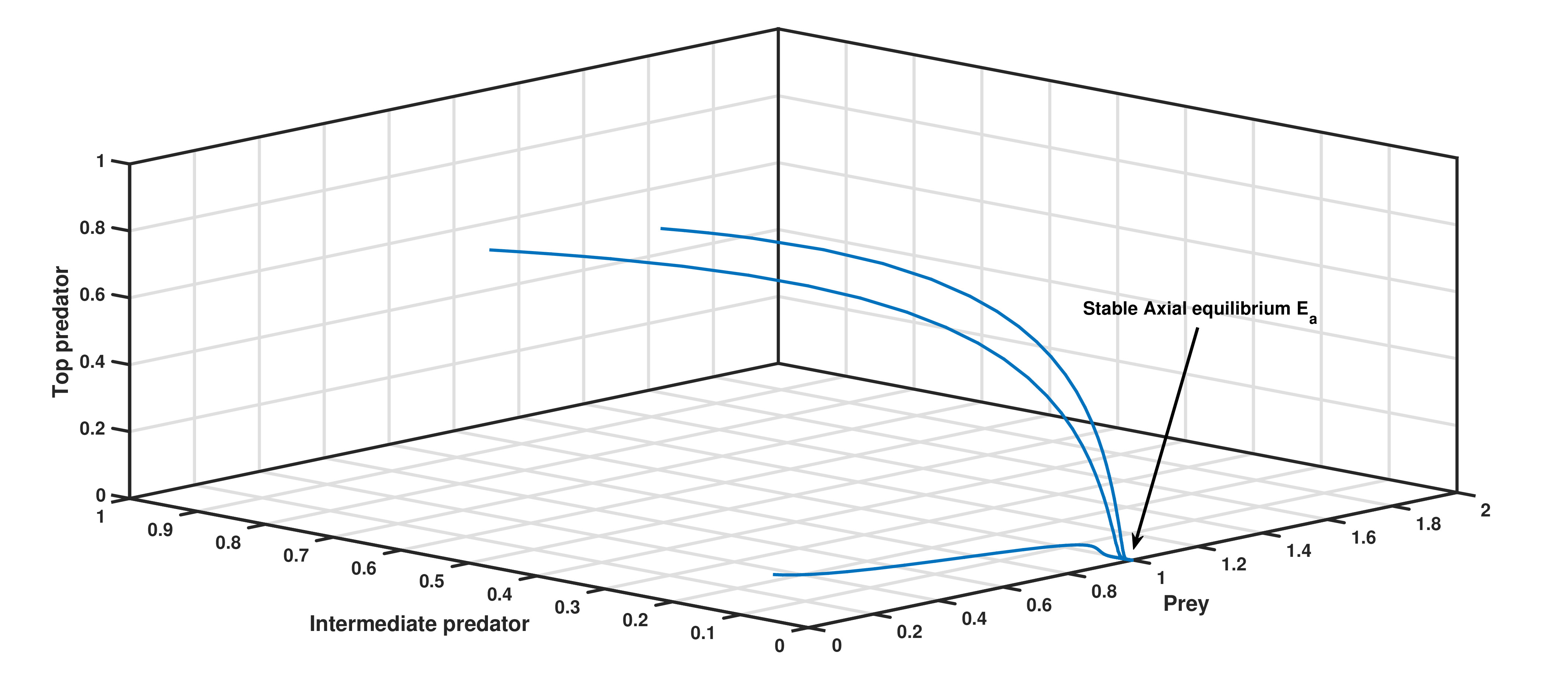}
         \caption{\emph{This figure demonstrates the situation that occurs right after the occurrence of a transcritical bifurcation at $m_1=0.733751855$. In addition to the other parameter values displayed below, the value of $m_1=0.8$ was used to generate this figure.}}
         \label{m1=0.8}
     \end{subfigure}
       \caption{\emph{ These figures display the various scenarios that occur in the system (\ref{Final ode eq}) when the parameter $m_1$ is changed, while keeping the other parameter values constant at $r_1=2$, $r_5 = 1$, $\beta = 0.01$, $m_2 = 0.5$, $r_2 = 1$, $d_1 = 0.25$, $r_3 =1$, $d_2 = 0.5$, $r_4 =3$, $b = 1$, $q=0.5$, $r = 0.01$ and $\alpha=1$.}}
        \label{m1effect}
\end{figure}
In figure (\ref{m10.5odets}), we can clearly see highly concentrated fluctuations in the populations of the three species, whereas in figure (\ref{m10.5fde0.98ts}), the fluctuations gradually disappear after a certain amount of time. Furthermore, it can be observed from figures (\ref{m10.5fde0.9ts}) and (\ref{m10.5fde0.85ts}) that the scale of fluctuations diminishes as the fractional order reduces, inducing more stability within the system. Similarly, figure (\ref{m2=0effectofmemory}) illustrates that as the value of $\alpha$ decreases from 1 to 0, the fluctuations within the system become stabilised, demonstrating the impact of memory on the system. In a similar way figure (\ref{m1=0=m2effectofmemory}) illustrates that in the system (\ref{Frac eq}) where the prey species and intermediate predators do not exhibit refuge behaviour against their predators, the fluctuations within the system increases as the species' memory diminishes. The same conclusion may be deduced from figures (\ref{q=r=0memoryeffect}) and (\ref{b2=0memoryeffect}). Figure (\ref{q=r=0memoryeffect}) illustrates the impact of fading memory on the system (\ref{Frac eq}) in the absence of any harvesting. Figure (\ref{b2=0memoryeffect}) illustrates the impact of memory on the system (\ref{Frac eq}) in the absence of prey odour effect. Hence, based on the aforementioned figures, it has been noted that individuals who suffer from memory loss or a deterioration in their ability to recall prior events can have a negative impact on the stability of the corresponding system.

\begin{figure}[H]
     \centering
     \begin{subfigure}{0.45\textwidth}
         \centering
         \includegraphics[width=\textwidth]{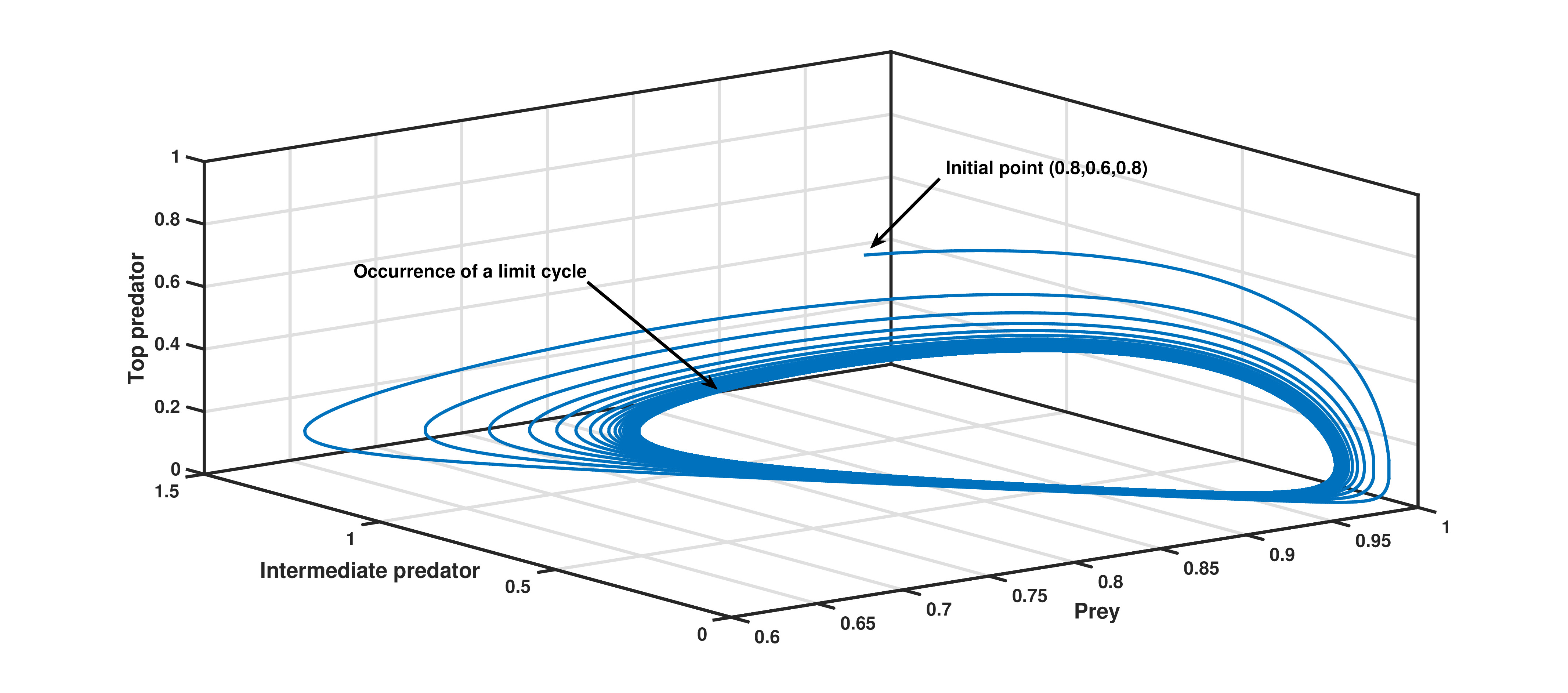}
         \caption{\emph{ This diagram exhibits the pre-Hopf bifurcation state. The figure is generated using a value of $m_2=0.4$, in addition to the other given parameter values.}}
         \label{m2 0.4}
     \end{subfigure}
      \hfill
     \begin{subfigure}{0.45\textwidth}
         \centering
         \includegraphics[width=\textwidth]{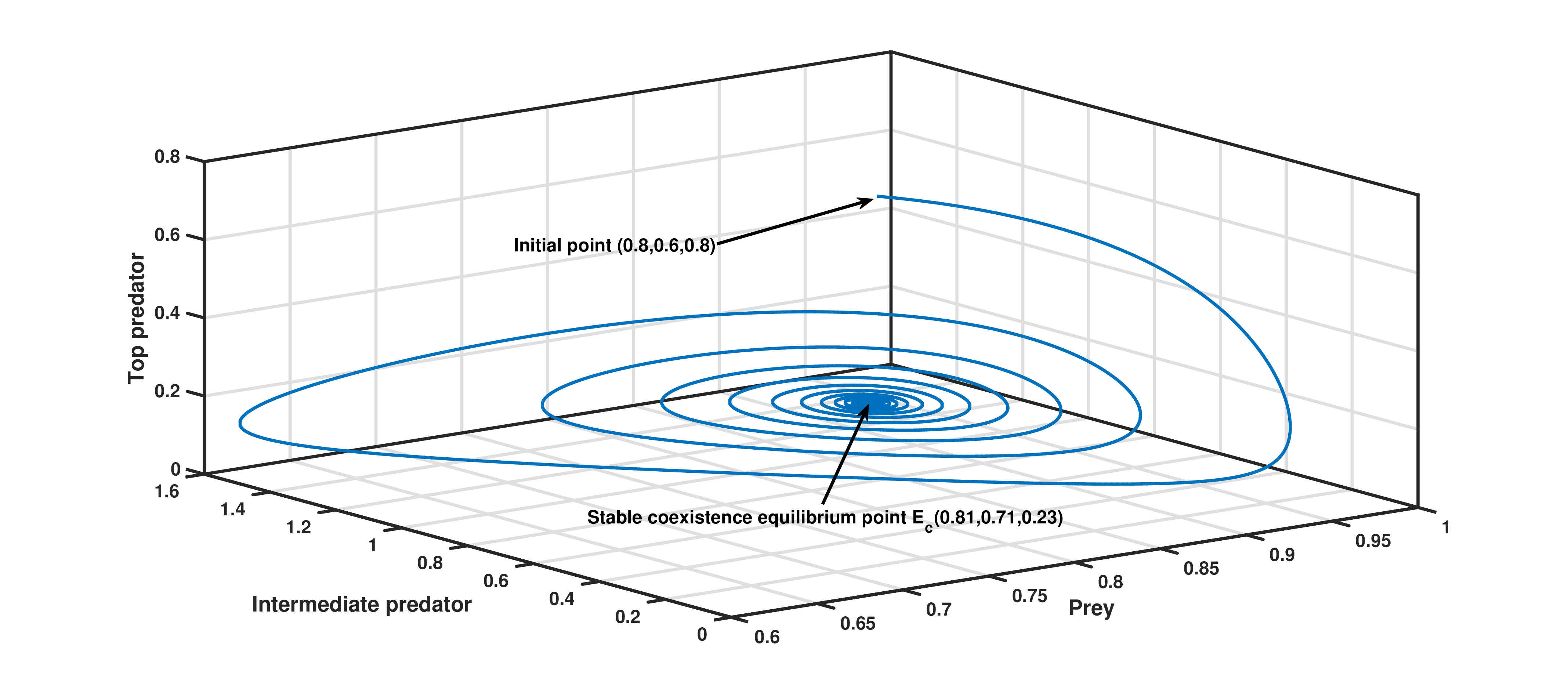}
         \caption{\emph{This image illustrates the immediate aftermath of a Hopf bifurcation at $m_2=0.503528$. This figure was produced using the value of $m_2=0.6$ in addition to the other parameter values shown below.}}
         \label{m2=0.6}
     \end{subfigure}
     \hfill
     \begin{subfigure}{0.45\textwidth}
         \centering
         \includegraphics[width=\textwidth]{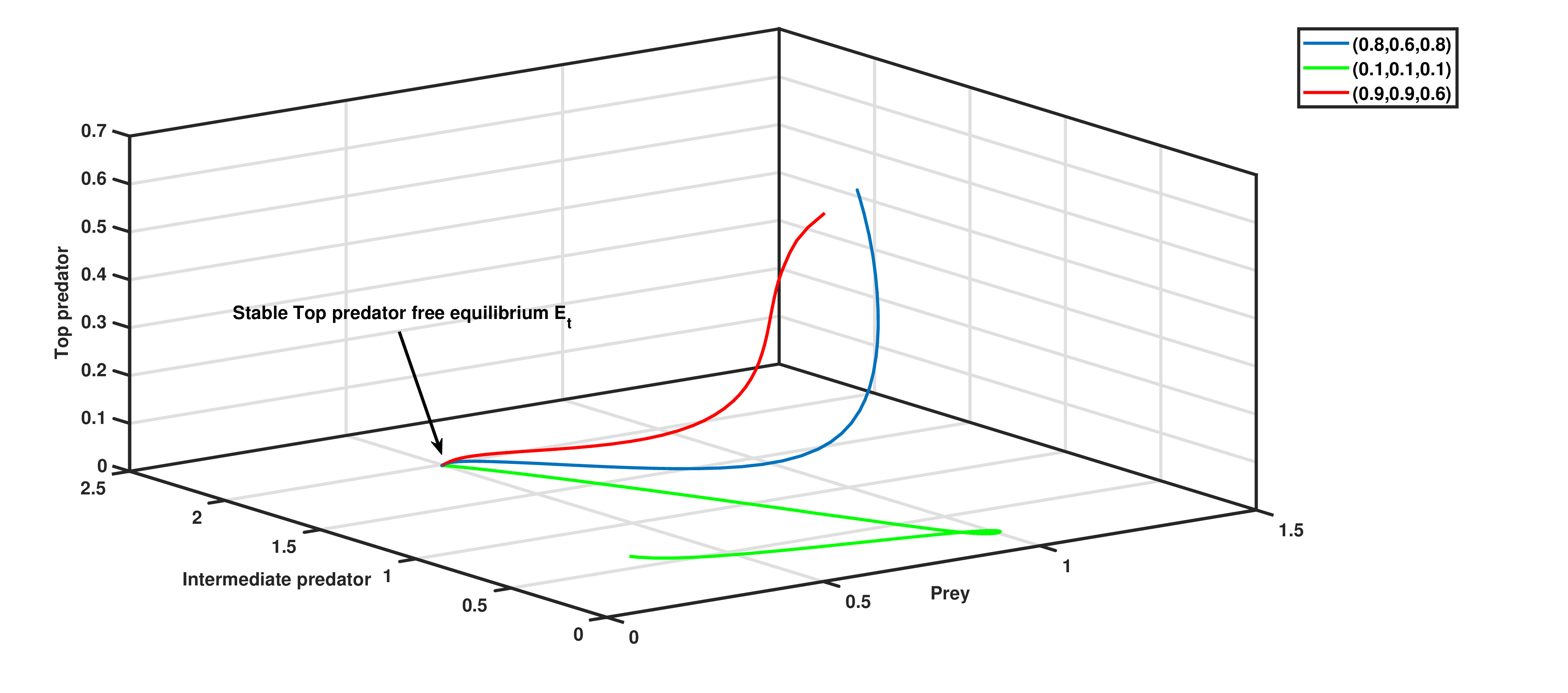}
         \caption{\emph{This diagram illustrates the immediate aftereffects of a transcritical bifurcation at $m_2=0.74958126$. Moreover this figure is generated using a value of $m_2=0.8$, in addition to the other parameter values mentioned below.}}
         \label{m2=0.8}
     \end{subfigure}
       \caption{\emph{These illustrations show the various scenarios that arise in the system (\ref{Final ode eq}) when the parameter $m_2$ is altered, while keeping the other parameter values constant at $r_1=2$, $r_2 = 1$ $\beta = 0.01$, $m_1 = 0.5$, $r_3 =1$, $d_1 = 0.25$, $r_4 =3$, $d_2 = 0.5$, $b = 1$, $r_5 = 1$, $q=0.5$, $r = 0.01$ and $\alpha=1$ .}}
        \label{m2effect}
\end{figure}

\subsection{Impact of the parameters $m_1$, $m_2$ on the population dynamics of the system (\ref{Frac eq}) for all $\alpha \in (0,1]$} In this part, we analyse the impact of the parameters related to the refuge triggered by the predator odour in the aforementioned system. At first, we analyse the effects of parameter $m_1$ on the system (\ref{Frac eq}) for $\alpha=1$, i.e., the system (\ref{Final ode eq}). The figure (\ref{m1equilibrium curve}) provide a clear and visually appealing demonstration of the several bifurcations that occur when the parameter $m_1$ is altered. We accomplish this by assuming other parameter values like $r_1=2$, $r_3 =1$, $\beta = 0.01$, $r_2 = 1$, $m_2 = 0.5$, $d_1 = 0.25$, $d_2 = 0.5$, $r_4 =3$, $b = 1$, $q=0.5$, $r_5 = 1$, $r = 0.01$, only altering the parameter $m_1$. If the value of parameter $m_1$ is below a threshold value of $m_1=0.4498=m_1^{hb1}$, then the system (\ref{Final ode eq}) exhibits stability around the fixed point $E_c$. Given the following set of parameter values, it can be readily confirmed numerically: $r_1=2$, $\beta = 0.01$, $m_1= 0.3$, $r_2 = 1$, $m_2 = 0.5$, $r_3 =1$, $d_1 = 0.25$, $r_4 =3$, $d_2 = 0.5$, $b = 1$, $r_5 = 1$, $q=0.5$, and $r = 0.01$. For this particular set of parameter values, we obtain 
%the following results: $N_1=1.53>0$, $N_2=0.13>0$, $N_3=0.18>0$, and $N_1N_2-N_3=0.027>0$. This means
that all the requirements needed to achieve stability given in theorem (\ref{lsco}) are satisfied. This confirms the stability of $E_c$. This is shown in figure (\ref{m1 0.3}). When the value of $m_1$ reaches a certain threshold, known as the Hopf bifurcation point, the system described by equation (\ref{Final ode eq}) undergoes a significant shift in stability. Numerical verification of the existence of a Hopf bifurcation can also be accomplished by applying the theorem (\ref{odehopfthm}). Given the threshold value of $m_1=m_1^{hb1}$ and keeping other parameter values unchanged, we find that all the conditions specified in theorem (\ref{odehopfthm}) are fulfilled.
%the following values: $N_1=1.6>0$, $N_2=0.07>0$, $N_3=0.12>0$, $N_1N_2-N_3=0$, and $N_1(m_1^{hb1}) N_2^{'}(m_1^{hb1}) +N_2(m_1^{hb1}) N_1^{'}(m_1^{hb1}) -N_3^{'}(m_{1}^{hb})=-0.07 \neq 0$. 
This confirms the existence of a Hopf bifurcation at $m_1=m_1^{hb1}$. Furthermore, if the value of $m_1$ is slightly elevated, the system (\ref{Final ode eq}) demonstrates instability. Assuming the value of $m_1$ is $0.5$, while keeping all other parameter values the same, we find that $N_1N_2-N_3=-0.001<0$. This result corroborates the instability of the system near $E_c$. This situation is shown visually in figure (\ref{m1 0.5}). Another Hopf bifurcation occurs when the parameter $m_1$ is increased to $m_1=0.5295=m_1^{hb2}$, resulting in a change in the stability of the system (\ref{Final ode eq}). When $m_1=0.5295=m_1^{hb2}$ and all other parameter values remain the same, all the conditions of theorem (\ref{odehopfthm}) are satisfied. 
%This can be seen as $N_1=1.7>0$, $N_2=0.05>0$, $N_3=0.09>0$, $N_1N_2-N_3=0$, and $N_1(m_1^{hb1}) N_2^{'}(m_1^{hb1}) +N_2(m_1^{hb1}) N_1^{'}(m_1^{hb1}) -N_3^{'}(m_{1}^{hb})=0.07 \neq 0$.
This validates the presence of a second Hopf bifurcation at $m_1=0.5295=m_1^{hb2}$. If the values of $m_1$ exceed $m_1^{hb2}$, the system (\ref{Final ode eq}) demonstrates stability in the vicinity of the fixed point $E_c$. Numerical confirmation can be obtained by evaluating the parameter values. For example, when $m_1=0.6>m_1^{hb2}$ and all other parameters are unaltered, we find that $N_1=1.75>0$, $N_2=0.04>0$, $N_3=0.06>0$, and $N_1N_2-N_3=0.01>0$. Figure (\ref{m1 0.6}) depicts this same scenario. On the other hand, if we slightly increase the value of the parameter $m_1$, an interesting phenomenon occurs. At a certain point, denoted as $m_1=0.73375185=m_1^{tbp}$, a transcritical bifurcation occurs. This results in the system (\ref{Final ode eq}) being unstable around $E_c$, whereas the fixed point $E_a$ becomes stable.  Numerically, when $m_1$ is equal to 0.8 and all other parameters remain unchanged, we obtain $r<4= \frac{r_1}{q}$ and $r_2=1 < \frac{-d_1 r_1^2}{-\beta r^2 q^2 r_3 + \omega_6}$. Additionally, we have $N_3=-0.03<0$. Therefore, it is evident that $E_a$ is stable while $E_c$ is unstable. The precise depiction of the particular scenario can be observed in figure (\ref{m1=0.8}). Based on the preceding discussion, it is evident that the parameter $m_1$ plays a significant role in the system (\ref{Final ode eq}). By looking at the situation from an an ecological standpoint, it has been observed that when the parameter $m_1$ is set to a very low value, the prey population tends to remain small. On the other hand, when the amount of prey refuge increases, it has a direct positive impact on the population of the prey species, resulting in increase in the prey population as well. As the parameter $m_1$ increases, an interesting phenomenon occurs. At a certain value, fluctuations in the population of the three species are observed. These fluctuations are subsequently stabilised by the occurrence of another Hopf bifurcation at a slightly elevated value of the parameter $m_1$. By increasing the parameter $m_1$, a remarkable phenomenon known as a transcritical bifurcation occurs. This leads to an unstable coexistence equilibrium, making it impossible for all three populations to cohabit.

Furthermore, it has been noted that for the range of $\alpha$, $0<\alpha<1$ in the system (\ref{Frac eq}), there is no alteration in the stability of the coexistence equilibrium point $E_{c}$ within the interval $[0,m_1^{tbp})$ of the parameter $m_1$. Within the range $(m_1^{tbp},1)$, the system exhibits behavior consistent with that observed in the system described by equation (\ref{Final ode eq}).

Now, we will examine the significance of the parameter $m_2$ in the system (\ref{Final ode eq}). To do this, we set other parameters to values like $r_1 = 2$, $r_5 = 1$, $\beta = 0.01$, $m_1 = 0.5$, $d_1 = 0.25$, $r_2 = 1$, $d_2 = 0.5$, $r_4 = 3$, $b = 1$, $r_3 = 1$, $q = 0.5$, $r = 0.01$, and change only $m_2$. Based on the information provided in figure (\ref{m2equilibrium curve}), it is clear that the parameter $m_2$ plays a significant role in the system (\ref{Final ode eq}). For all values of the parameter $m_2$ that are less than a threshold value $m_2=0.503528=m_2^{hbp}$, and with all other parameter values remaining the same, the system (\ref{Final ode eq}) exhibits instability around the fixed point $E_c$. More precisely, the system (\ref{Final ode eq}) exhibits oscillations in populations. For instance, when we set the parameter values as follows: $r_1 = 2$, $r_4 = 3$, $\beta = 0.01$, $m_1 = 0.5$, $r_2 = 1$, $m_2 = 0.4$, $r_3 = 1$, $d_1 = 0.25$, $d_2 = 0.5$, $b = 1$, $q = 0.5$, $r_5 = 1$, and $r = 0.01$, the expression $N_1 N_2-N_3$ evaluates to -0.02, which is less than 0. This confirms the instability of $E_c$. It is shown in figure (\ref{m2 0.4}). The value $m_2=0.503528=m_2^{hbp}$ represents a Hopf bifurcation point, as it fulfils all the criteria stated in theorem (\ref{odehopfthm}). 
%When $m_2=0.503528=m_2^{hbp}$, we find that $N_2=0.065>0$, $N_3=0.10>0$, $N_1N_2-N_3=0$, and $N_1(m_2^{hbp}) N_2^{'}(m_2^{hbp}) +N_2(m_2^{hbp}) N_1^{'}(m_2^{hbp}) -N_3^{'}(m_2^{hbp})=0.02 \neq 0$.
Therefore, it is numerically confirmed that a Hopf bifurcation occurs at $m_2=0.503528=m_2^{hbp}$. This Hopf bifurcation stabilizes the system until a transcritical bifurcation occurs, which alters its stability. This transcritical bifurcation occurs at $m_2=0.74958126=m_2^{tbp}$, along with other parameter values that remain unchanged. For example, when the parameter values are set as follows: $r_1 = 2$, $r_4 = 3$, $\beta = 0.01$, $m_1 = 0.5$, $m_2 = 0.6$, $r_2 = 1$, $d_1 = 0.25$, $r_3 = 1$, $d_2 = 0.5$, $r_5 = 1$, $b = 1$, $q = 0.5$, and $r = 0.01$, the resulting values are $N_1=1.57>0$, $N_2=0.086>0$, $N_3=0.077>0$, and $N_1N_2-N_3=0.058>0$. This confirms the stability of $E_c$. This particular situation is illustrated in figure (\ref{m2=0.6}). For a different set of parameter values, specifically $r_1 = 2$, $r_5 = 1$, $\beta = 0.01$, $m_1 = 0.5$, $r_4 = 3$, $m_2 = 0.8$, $d_1 = 0.25$, $d_2 = 0.5$,  $r_3 = 1$, $b = 1$, $r_2 = 1$, $q = 0.5$, and $r = 0.01$, it has been determined that $N_1 N_2-N_3$ is equal to -9.39, which is less than 0. Moreover, all the conditions outlined in theorem (\ref{lsto}) are also met, confirming the instability of $E_c$ and the stability of $E_t$. The precise depiction of the particular circumstance can be observed in figure (\ref{m2=0.8}). When the intermediate predator seeks shelter from the top predator's predation pressure, the system (\ref{Final ode eq}) becomes more intricate and intriguing. It is evident that when the parameter $m_2$ is extremely low, oscillations in the population of all three species within the system occur. However, as this parameter increases, the fluctuations gradually decrease. Eventually, after reaching a certain value, these fluctuations stabilise due to the occurrence of a Hopf bifurcation. As the parameter $m_2$ increases, the population of the intermediate predator similarly increases. However, the population of prey continues to decline, a trend that can be rationalised from an ecological standpoint. When the parameter $m_2$ reaches a higher value, a transcritical bifurcation takes place, resulting in the inability of all three species to coexist within the system (\ref{Final ode eq}). This phenomenon can be readily understood from an ecological standpoint. When the intermediate predator strategically enhances its protection against predation by the top predator in response to the top predator's odour, thereby reducing its vulnerability to being hunted. Consequently, the predation pressure on the prey by the intermediate predator increases, leading to a substantial decline in the prey population. As a result, the coexistence of all three species within the system becomes extremely difficult.

When the parameter $m_2$ falls within the range $[0,m_2^{tbp})$, the coexisting equilibrium point demonstrates stability in the system (\ref{Frac eq}) for $0<\alpha<1$. This is in stark contrast to the fluctuations noted in the system (\ref{Final ode eq}) for the range $[0,m_2^{hbp})$, where $m_2^{hbp}< m_2^{tbp}$. When the parameter $m_2$ falls within the interval $(m_2^{tbp},1)$, the system outlined in (\ref{Frac eq}) for $0<\alpha<1$ exhibits behavior analogous to that of the system described in (\ref{Final ode eq}).\\\\

\begin{figure}[H]
     \centering
     \begin{subfigure}{0.45\textwidth}
         \centering
         \includegraphics[width=\textwidth]{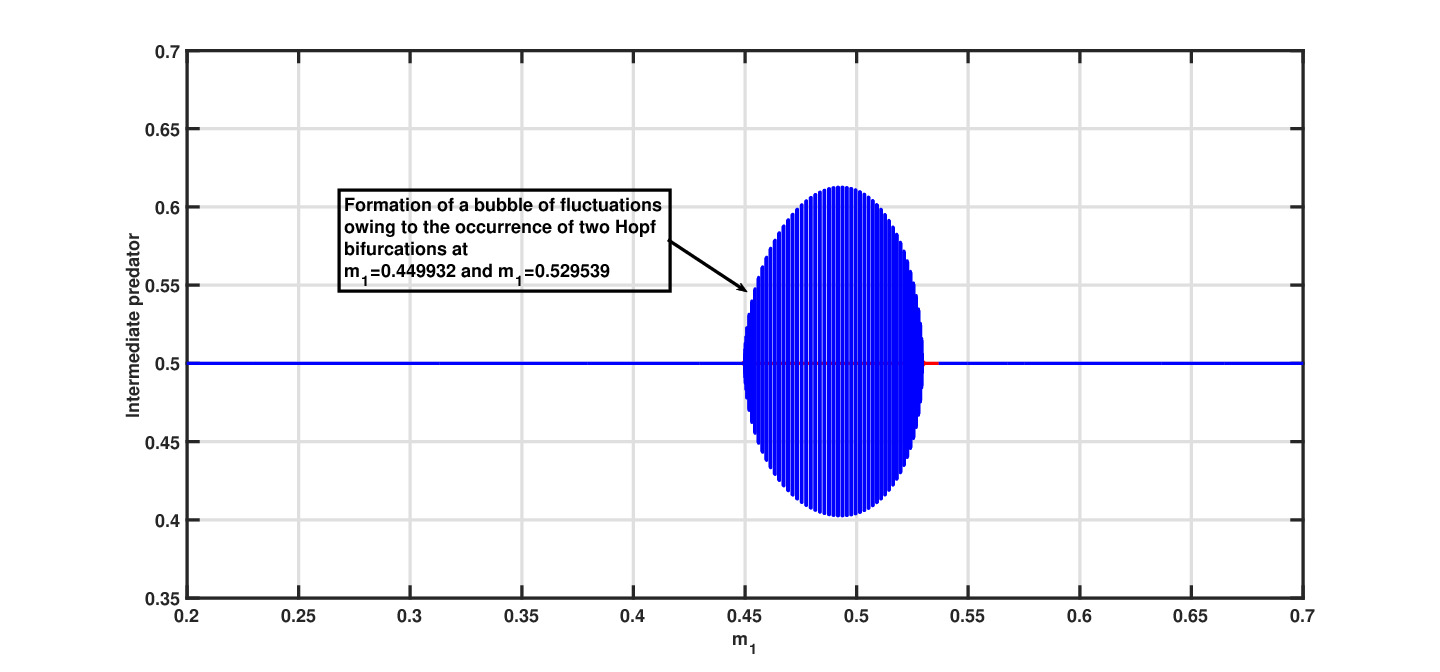}
         \caption{\emph{This figure displays the occurrence of a bubble of oscillations in $x_2-m_1$ plane. }}
         \label{x2m1}
     \end{subfigure}
      \hfill
      \begin{subfigure}{0.45\textwidth}
         \centering
         \includegraphics[width=\textwidth]{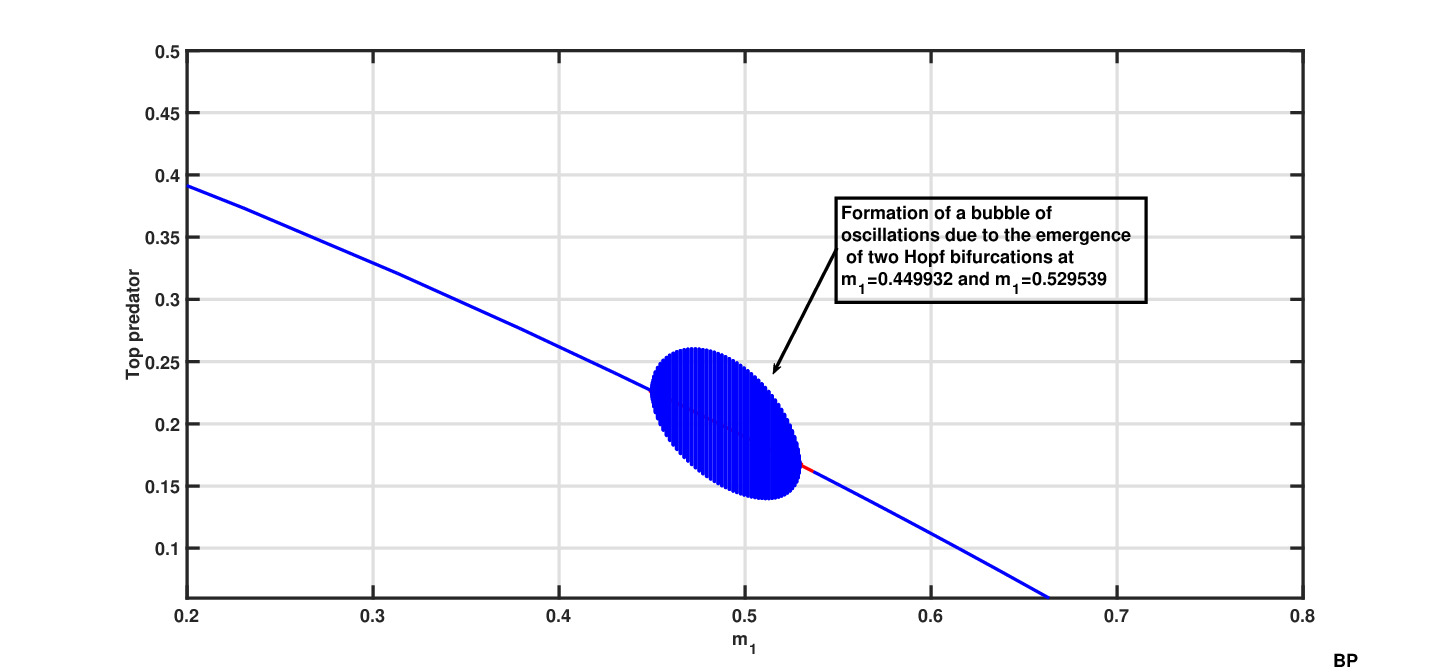}
         \caption{\emph{This picture illustrates the presence of a bubble of fluctuations in the $x_3-m_1$ plane.}}
         \label{x3m1}
     \end{subfigure}
       \caption{\emph{ The figures are produced utilising the following parameter values: $r_1=2$, $r_3=1$, $\beta=0.01$, $r_2=1$, $m_2=0.5$, $d_1=0.25$, $d_2=0.5$, $r_4=3$, $b=1$, $q=0.5$, $r_5=1$, $r=0.01$, with the sole modification of the parameter $m_1$. These figures effectively illustrate the existence of the bubbling phenomenon within the system (\ref{Final ode eq}). }}
        \label{bubblingphenomenon}
\end{figure}
\subsection{Presence of the bubbling phenomenon in the system (\ref{Final ode eq})}
Within this section, we will examine the occurrence of the bubbling phenomenon within the system (\ref{Final ode eq}). Occasionally, a bifurcation diagram may exhibit a closed-loop configuration resembling a bubble like structure due to the emergence and vanishing of oscillations originating from two Hopf bifurcation points. This occurrence is commonly known as the bubbling phenomenon. Many researchers have acknowledged the occurrence of bubbling phenomena in predator-prey models\cite{liz,sas,man1,man2}. The system (\ref{Final ode eq}) exhibits the bubbling phenomenon when certain parameter values are fixed and only the parameter $m_1$ is altered. The fixed parameter values are: $r_1=2$, $r_3 =1$, $\beta = 0.01$, $r_2 = 1$, $m_2 = 0.5$, $d_1 = 0.25$, $d_2 = 0.5$, $r_4 =3$, $b = 1$, $q=0.5$, $r_5 = 1$, $r = 0.01$. The emergence of a bubble of oscillations between two hopf bifurcation points, $m_1=0.4498=m_1^{hb1}$ and $m_1=0.5295=m_1^{hb2}$, respectively, is properly illustrated in figures (\ref{m1equilibrium curve}) and (\ref{bubblingphenomenon}). Following the initial Hopf bifurcation at $m_1=m_1^{hb1}$, there is a noticeable presence of oscillations in the populations of the three species. As the parameter $m_1$ continues to rise, the amplitude of these oscillations steadily increases. However, after reaching a certain threshold, the amplitude begins to diminish and eventually stabilises at the second Hopf bifurcation at $m_1=m_1^{hb2}$. Thereby, the significance of the parameter $m_1$ within the system (\ref{Final ode eq}) is further emphasised. The figure (\ref{m1equilibrium curve}) clearly illustrates the bubbling phenomenon in the $x_1-m_1$ plane. On the other hand, figures (\ref{x2m1}) and (\ref{x3m1}) illustrate the presence of a fluctuation bubble resulting from the appearance of two Hopf bifurcations at $m_1=m_1^{hb1}$ and $m_1=m_1^{hb2}$ on the $x_2-m_1$ and $x_3-m_1$ planes, respectively.

\begin{figure}[H]
     \centering
     \begin{subfigure}{0.45\textwidth}
         \centering
         \includegraphics[width=\textwidth]{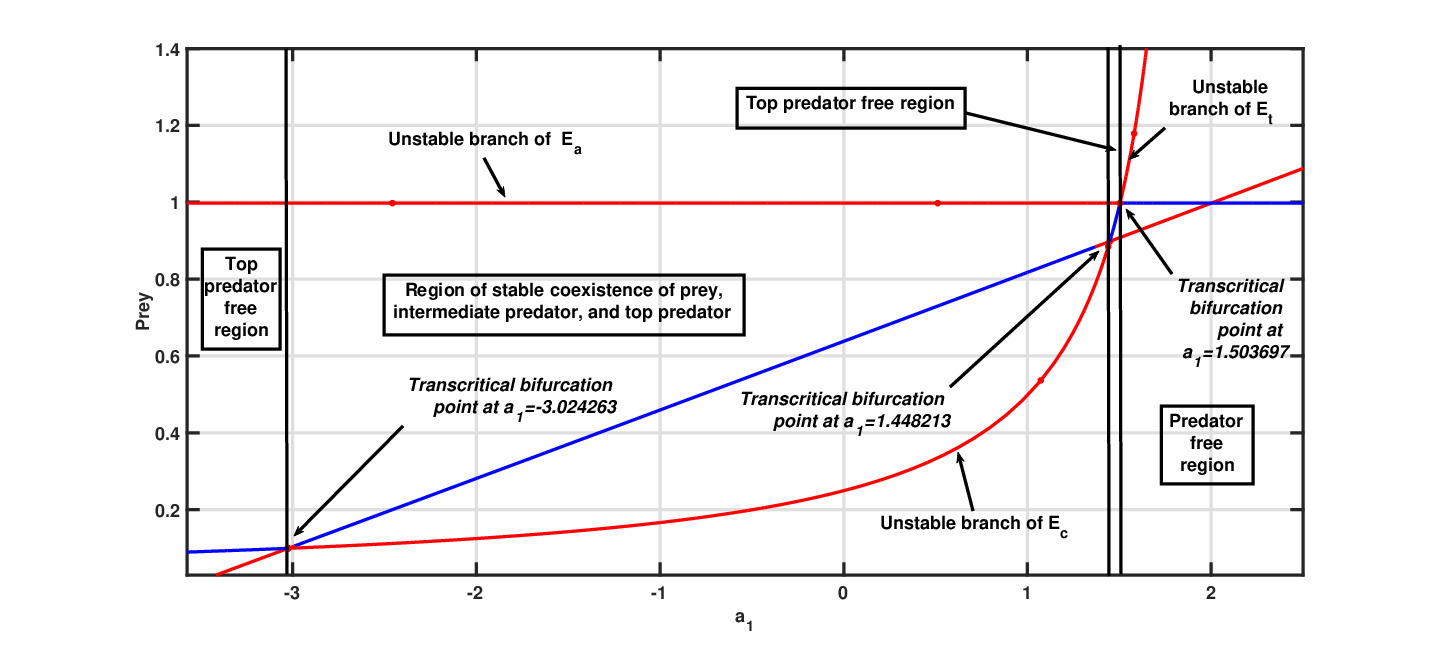}
         \caption{\emph{ This diagram illustrates equilibrium curve of $E_{c}$. Three transcritical bifurcation events are observed at $a_1=-3.024$, $a_1=1.448$, and $a_1=1.503$.}}
         \label{a1 equi}
     \end{subfigure}
      \hfill
     \begin{subfigure}{0.45\textwidth}
         \centering
         \includegraphics[width=\textwidth]{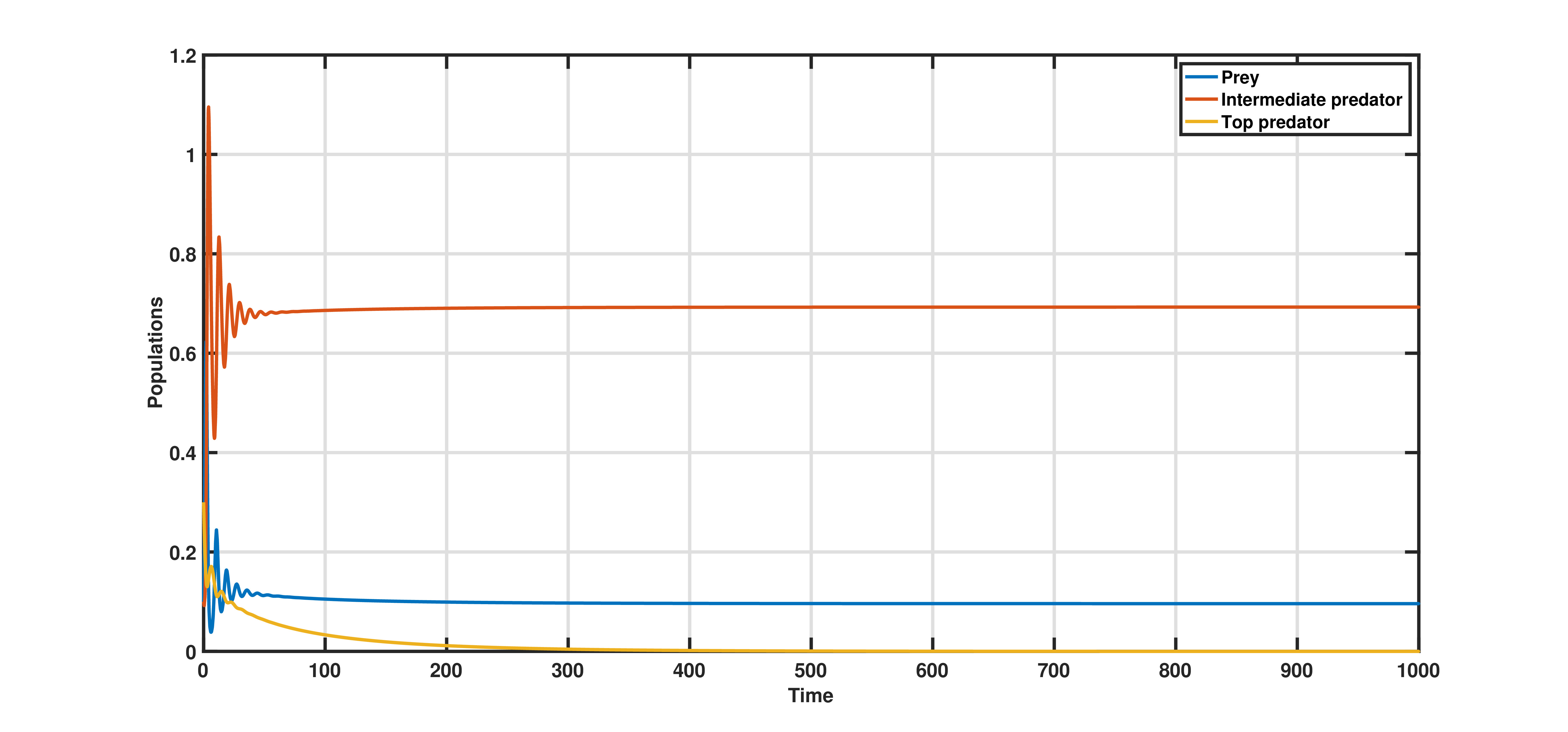}
         \caption{\emph{ The illustration depicts the scenario at $a_1=-3.2$ where the top predator vanishes from the system. This scenario is not ecologically viable as the parameter $a_1$ always has a positive value.}}
         \label{a1 3.2}
     \end{subfigure}
     \hfill
     \begin{subfigure}{0.45\textwidth}
         \centering
         \includegraphics[width=\textwidth]{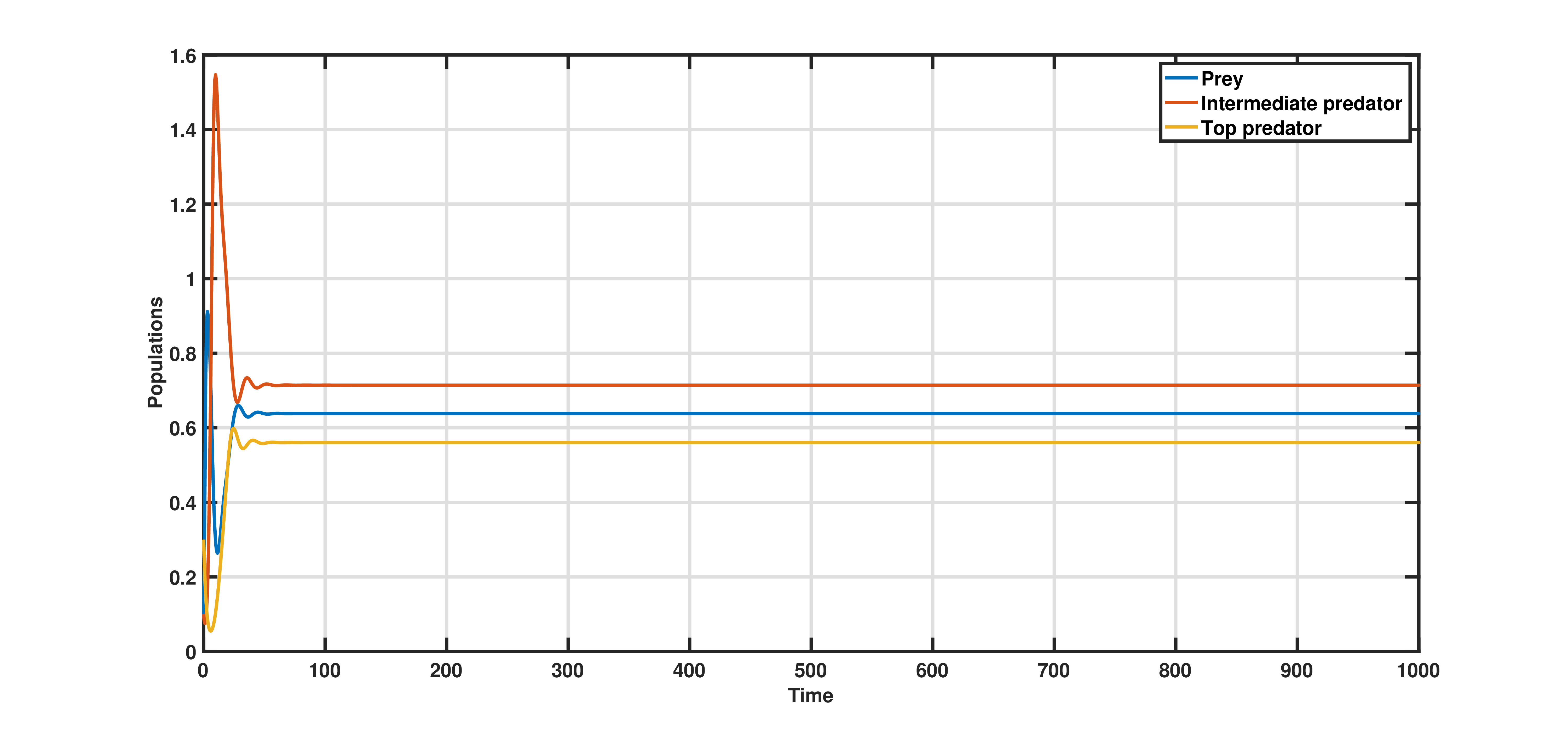}
         \caption{\emph{ This picture depicts the scenario at $a_1=0$, in which all populations coexist within the system.}}
         \label{a1 0}
     \end{subfigure}
     \hfill
     \begin{subfigure}{0.4 \textwidth}
         \centering
         \includegraphics[width=\textwidth]{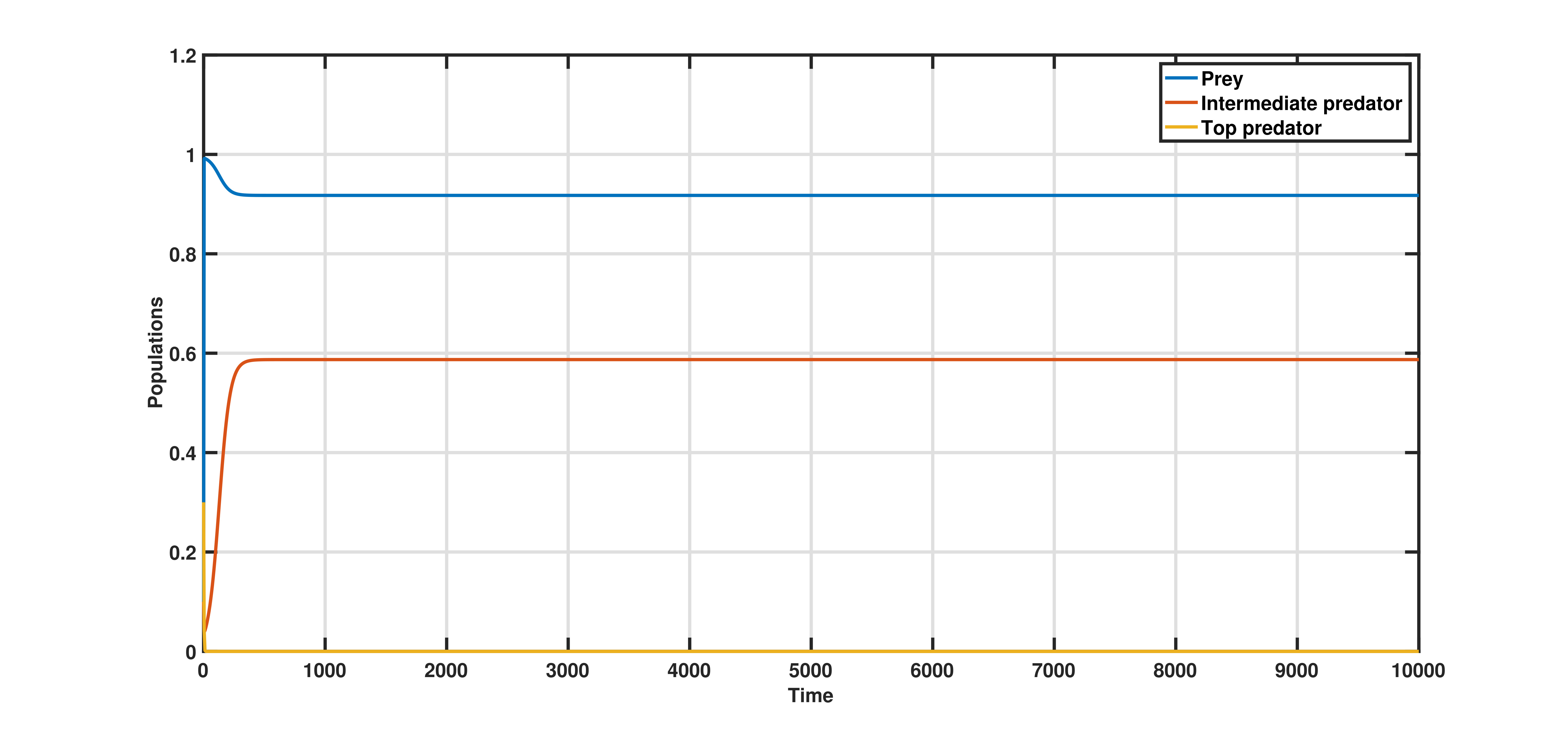}
         \caption{\emph{This picture illustrates the scenario at $a_1=1.46$, where the top predator becomes extinct within the ecosystem.}}
         \label{a1 1.46}
     \end{subfigure}
     \hfill
     \begin{subfigure}{0.4 \textwidth}
         \centering
         \includegraphics[width=\textwidth]{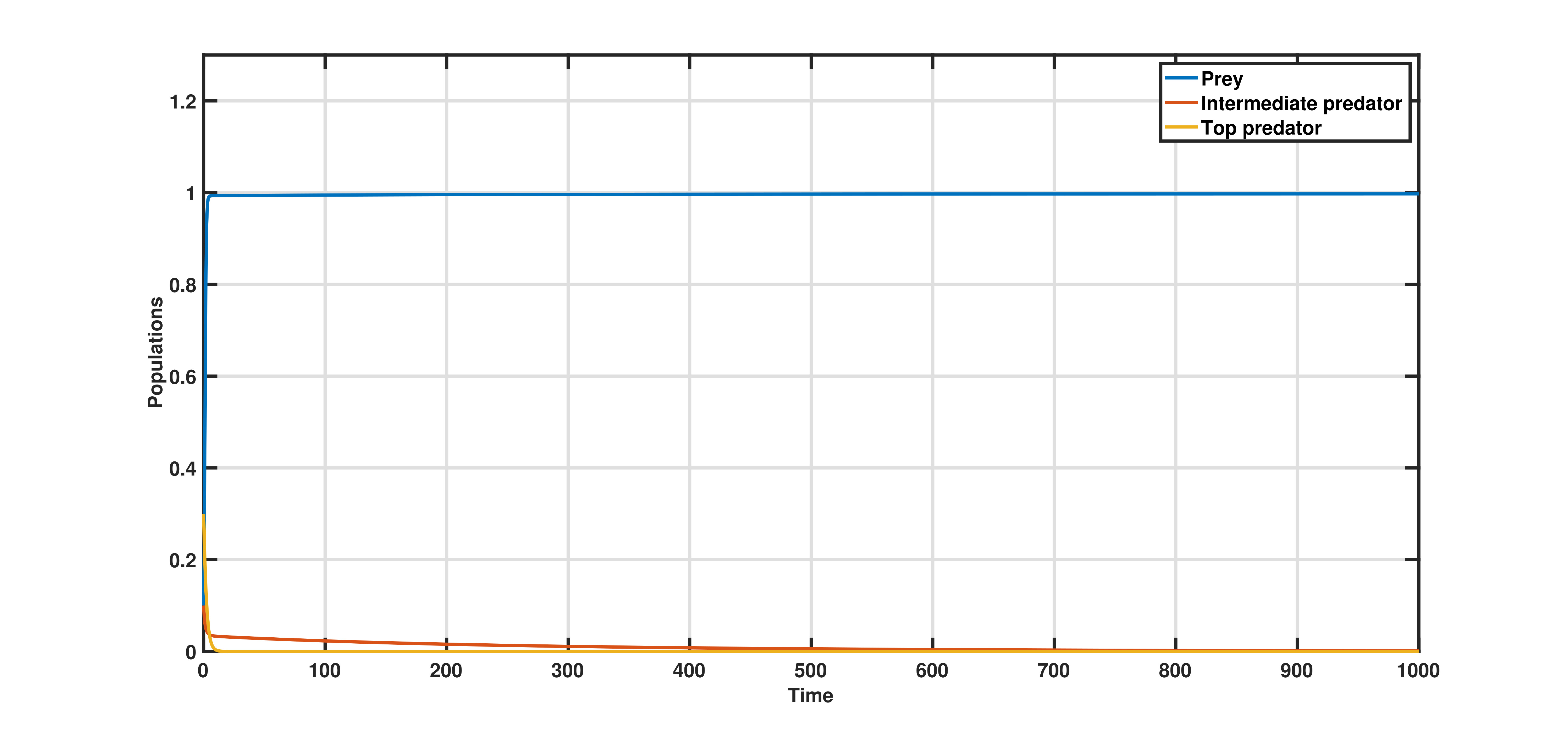}
         \caption{\emph{This figure demonstrates the scenario at $a_1=1.51$, where both predator becomes extinct within the ecosystem. }}
         \label{a1 1.51}
     \end{subfigure}
       \caption{\emph{These figures display the various scenarios that occur in the system (\ref{Final ode eq}) when the parameter $a_1$ is changed, while keeping the other parameter values constant at $r_1=2$, $r_5 = 1$, $\beta = 0.01$, $m_{fp}=0.5$, $m_{sp} = 0.6$, $r_2 = 1$, $d_1 = 0.25$, $r_3 =1$, $d_2 = 0.5$, $r_4 =3$, $a_2=1$, $b = 1$, $q=0.5$, $r = 0.01$, and $\alpha=1$.}}
        \label{a1effect}
\end{figure}

\subsection{Influence of the predator odour-related parameters $a_1$ and $a_2$ on the system (\ref{Frac eq}) dynamics for all $\alpha \in (0,1]$} In the given system (\ref{Firsteq}), the parameters associated with the odour of the intermediate predator and top predator are denoted as $a_1$ and $a_2$, respectively. Initially, we examine the influence of the parameter $a_1$ on the system (\ref{Frac eq}) for $\alpha=1$. With the following parameter values set, the figure (\ref{a1effect}) is produced: $r_1=2$, $r_5 = 1$, $\beta = 0.01$, $m_{fp}=0.5$, $m_{sp} = 0.6$, $r_2 = 1$, $d_1 = 0.25$, $r_3 =1$, $d_2 = 0.5$, $r_4 =3$, $a_2=1$, $b = 1$, $q=0.5$, $r = 0.01$, and continuously varying the parameter $a_1$. Based on the figure (\ref{a1effect}), it is evident that the parameter $a_1$ has a noticeable impact on the system (\ref{Final ode eq}). If the parameter $a_1<-3.024$, the top predator within the system is at risk of extinction. This is shown in figures (\ref{a1 equi}) and (\ref{a1 3.2}). This is because a transcritical bifurcation occurs at $a_1=-3.024$. In ecological circumstance, this is not possible as the parameter $a_1$ is always positive. However, when $-3.024<a_1<1.448$, then it is observed that the prey, intermediate predator and the top predator, all coexist within the system (\ref{Final ode eq}). This is evident in figure (\ref{a1 0}). Moreover, there is another transcritical bifurcation that occurs at $a_1=1.448$. As a result, the system once again faces the extinction of the top predator. This is illustrated in figure (\ref{a1 1.46}). On the other hand, there is another transcritical bifurcation that takes place at $a_1=1.503$. This bifurcation results in the removal of predators, leaving only the prey species in the system, as seen in figure (\ref{a1 1.51}). Thus, the parameter $a_1$ is capable of causing transcritical bifurcations in the system (\ref{Final ode eq}). Consequently, the coexistence of all species in the system is highly dependent on the value of this parameter. In addition, for $0<\alpha=0.98<1$, the system (\ref{Frac eq}) exhibits properties identical to those described above for $\alpha=1$.

\begin{figure}[H]
     \centering
     \begin{subfigure}{0.45\textwidth}
         \centering
         \includegraphics[width=\textwidth]{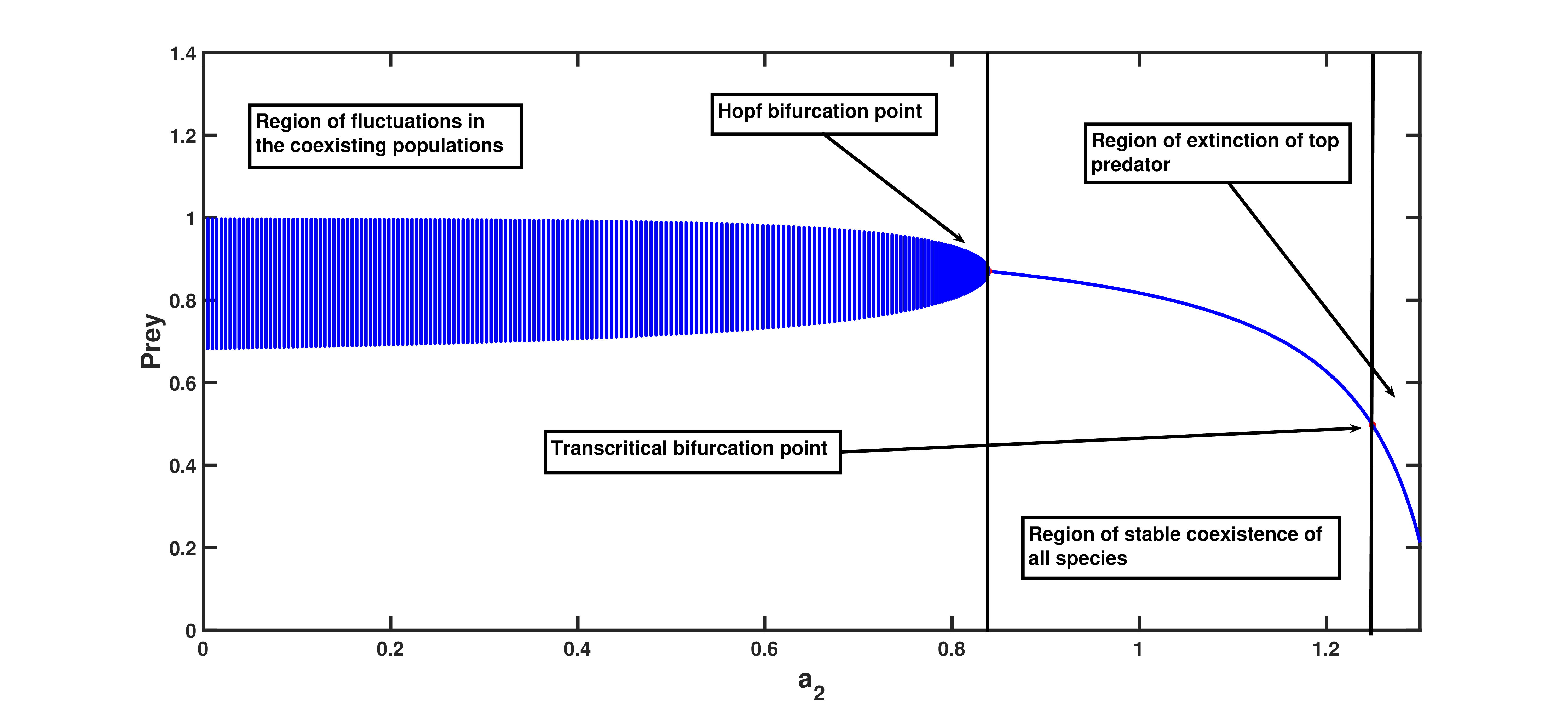}
         \caption{\emph{ This diagram shows the equilibrium curve of $E_{c}$. It exhibits the occurrence of a Hopf bifurcation and a transcritical bifurcation in the system.}}
         \label{a2 equi}
     \end{subfigure}
      \hfill
     \begin{subfigure}{0.45\textwidth}
         \centering
         \includegraphics[width=\textwidth]{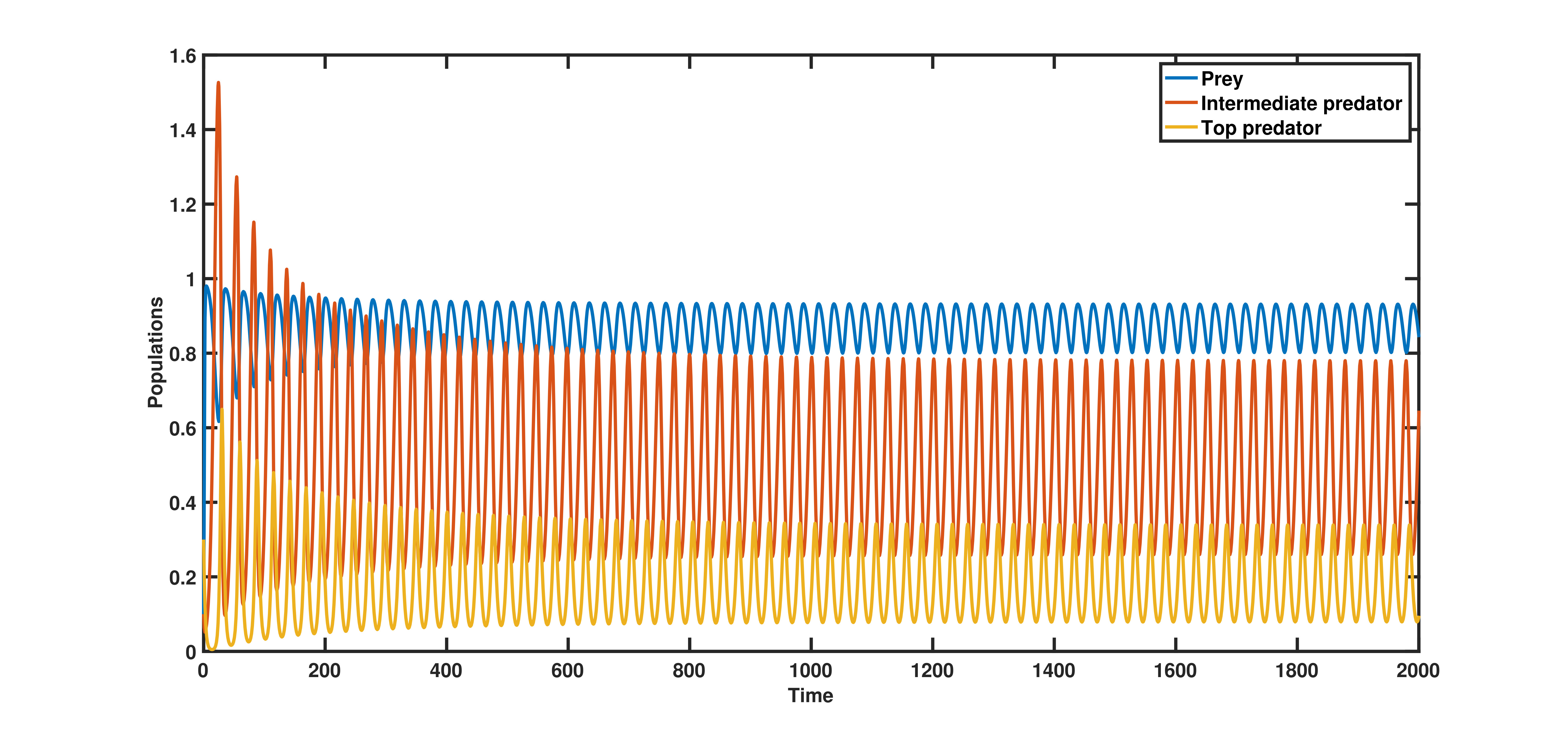}
         \caption{\emph{ This figure displays the time series of all three populations at $a_2=0.8$. It portrays the situation where fluctuations in the populations are evident.}}
         \label{a2 0.8}
     \end{subfigure}
     \hfill
     \begin{subfigure}{0.45\textwidth}
         \centering
         \includegraphics[width=\textwidth]{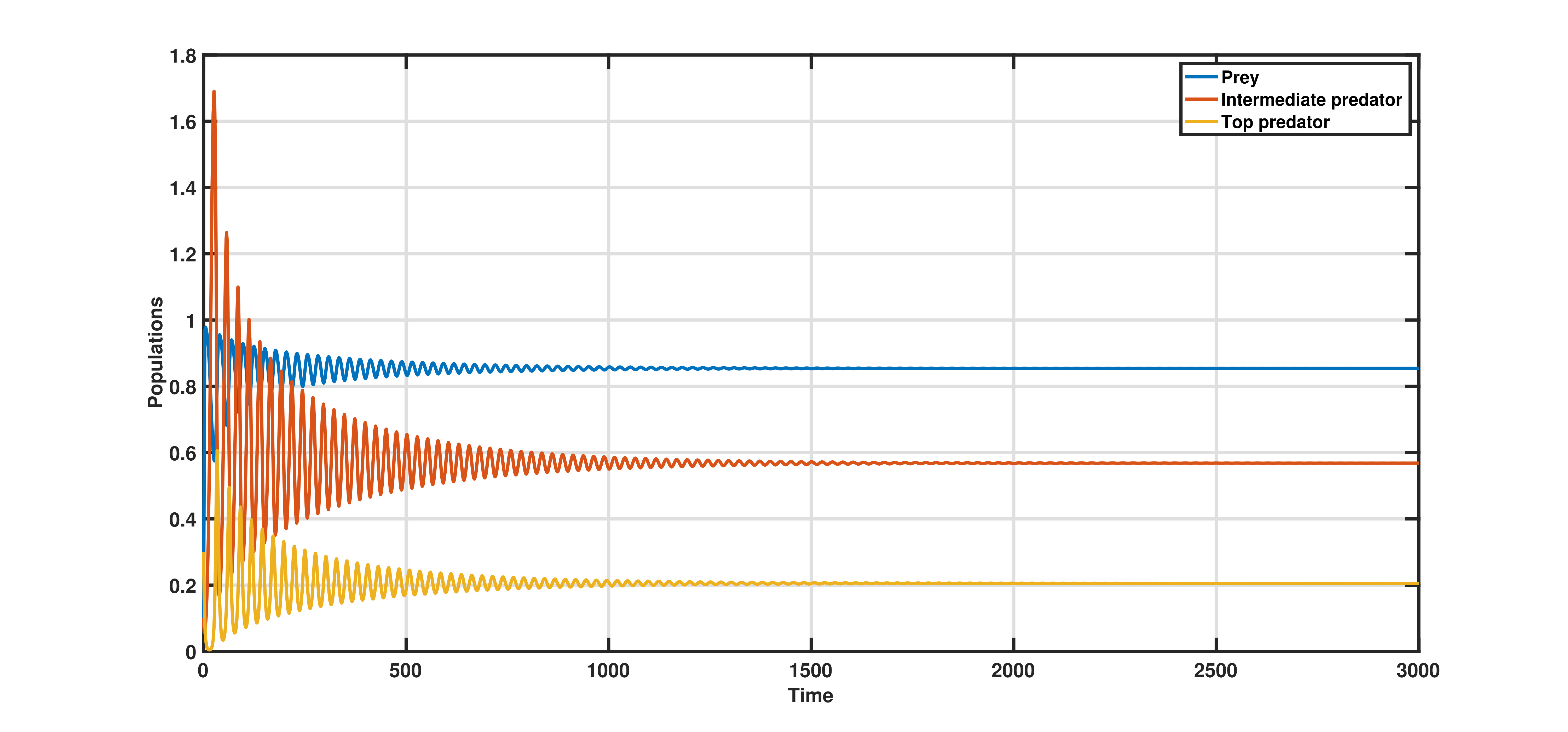}
         \caption{\emph{ This image illustrates the situation at $a_2=0.9$, where all populations reside together within the system.}}
         \label{a2 0.9}
     \end{subfigure}
     \hfill
     \begin{subfigure}{0.4 \textwidth}
         \centering
         \includegraphics[width=\textwidth]{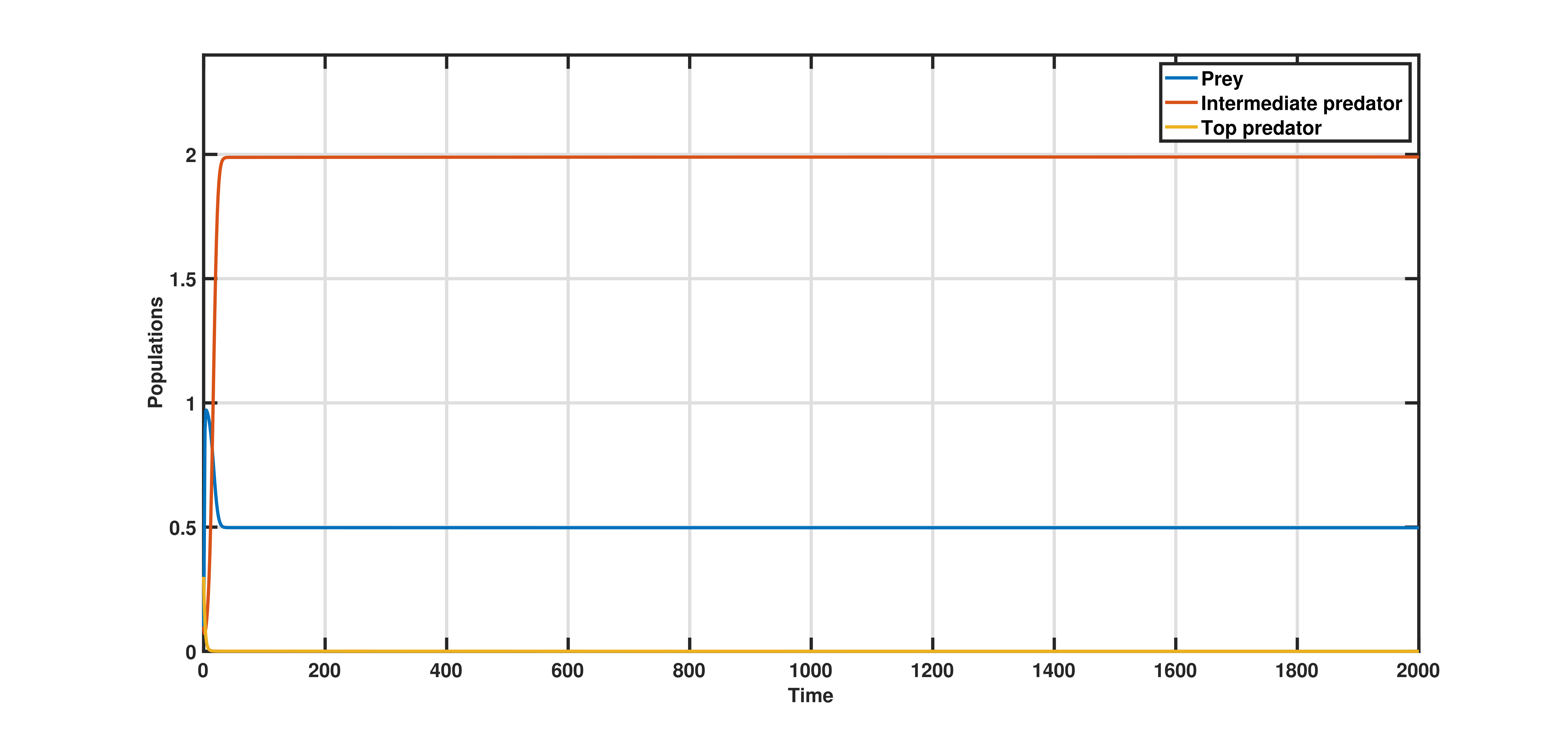}
         \caption{\emph{This graphic depicts the situation  at $a_2=1.25$, where tthe top predator goes extinct inside the system.}}
         \label{a2 1.25}
     \end{subfigure}
       \caption{\emph{The figures illustrate the different scenarios that arise in the system (\ref{Final ode eq}) when the parameter $a_2$ is modified, while keeping the other parameter values constant at $r_1=2$, $r_5 = 1$, $\beta = 0.01$, $m_{fp} = 0.5$, $m_{sp} = 0.6$, $r_2 = 1$, $d_1 = 0.25$, $r_3 =1$, $d_2 = 0.5$, $r_4 =3$, $a_1=1$, $b = 1$, $q=0.5$, $r = 0.01$, and $\alpha=1$.}}
        \label{a2effect}
\end{figure}

We now examine the effect of the parameter $a_2$ in the system (\ref{Frac eq}) with $\alpha=1$, i.e., the system (\ref{Final ode eq}). Figure (\ref{a2 equi}) represents the coexistence equilibrium curve which is generated using some fixed parameter values and constantly changing the value of the parameter $a_2$. The fixed parameter values are: $r_1=2$, $r_5 = 1$, $\beta = 0.01$, $m_{fp} = 0.5$, $m_{sp}=0.6$, $r_2 = 1$, $d_1 = 0.25$, $r_3 =1$, $d_2 = 0.5$, $r_4 =3$, $a_1=1$, $b = 1$, $q=0.5$, and $r = 0.01$. Based on the information shown in figure (\ref{a2 equi}), it is clear that the parameter $a_2$ plays a significant role in the system (\ref{Final ode eq}). It has the ability to cause a Hopf bifurcation and a transcritical bifurcation within the system. When $a_2<0.839221$1, all three populations within the system experiences fluctuations that are later stabilised by a hopf bifurcation occurring at $a_2=0.839221$. This is illustrated in the figure (\ref{a2 0.8}). Therefore, for a range of values between $0.839221$ and $1.249302$ for $a_2$, the system (\ref{Final ode eq}) exhibits the coexistence of all three species. The evidence is readily apparent in the figure (\ref{a2 0.9}). Nevertheless, when $a_2> 1.249302$, the top predator disappears from the system, rendering coexistence unattainable as a transcritical bifurcation takes place at $a_2= 1.249302$. This is depicted in the figure (\ref{a2 1.25}). Thus, the significance of the parameter $a_2$ in either facilitating or preventing coexistence is well established. 

Additionally, when the parameter $a_2$ is within the interval $[0,0.839221)$, the coexisting equilibrium point exhibits stability in the system (\ref{Frac eq}) for $0<\alpha=0.98<1$.  This sharply contrasts with the fluctuations observed in the system (\ref{Final ode eq}) for the interval $[0,0.839221)$. However, when the parameter $m_2$ is within the range $(0.839221,1.25]$, the system defined in (\ref{Frac eq}) for $0<\alpha=0.98<1$ demonstrates behavior similar to that of the system shown in (\ref{Final ode eq}).

%In this subsection, we discuss the scenario where the intermediate predator does not engage in refuge to avoid predation. This may happen for various reasons, such as when the top predator uses some special techniques which break the effectiveness of their refuge, they cannot detect the odour of top predator due to some special strategies of the top predator, etc..

%\subsubsection{when $m_1=0.55$}

\begin{figure}[H]
     \centering
     \begin{subfigure}{0.45\textwidth}
         \centering
         \includegraphics[width=\textwidth]{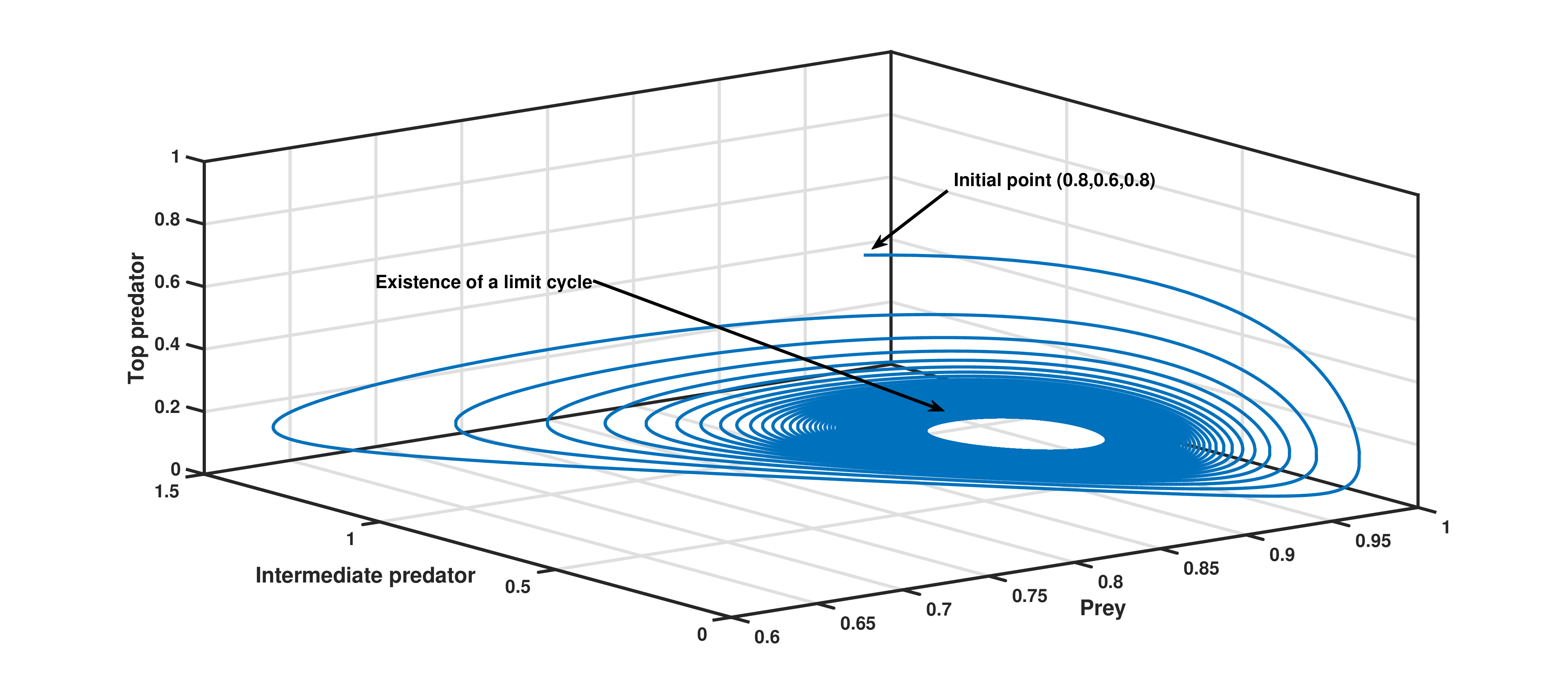}
         \caption{\emph{This diagram illustrates the scenario previous to the occurrence of a Hopf bifurcation at $q=1.04003$. The figure is generated using a value of $q=0.3$, along with the other provided parameter values.}}
         \label{q0.3}
     \end{subfigure}
      \hfill
     \begin{subfigure}{0.45\textwidth}
         \centering
         \includegraphics[width=\textwidth]{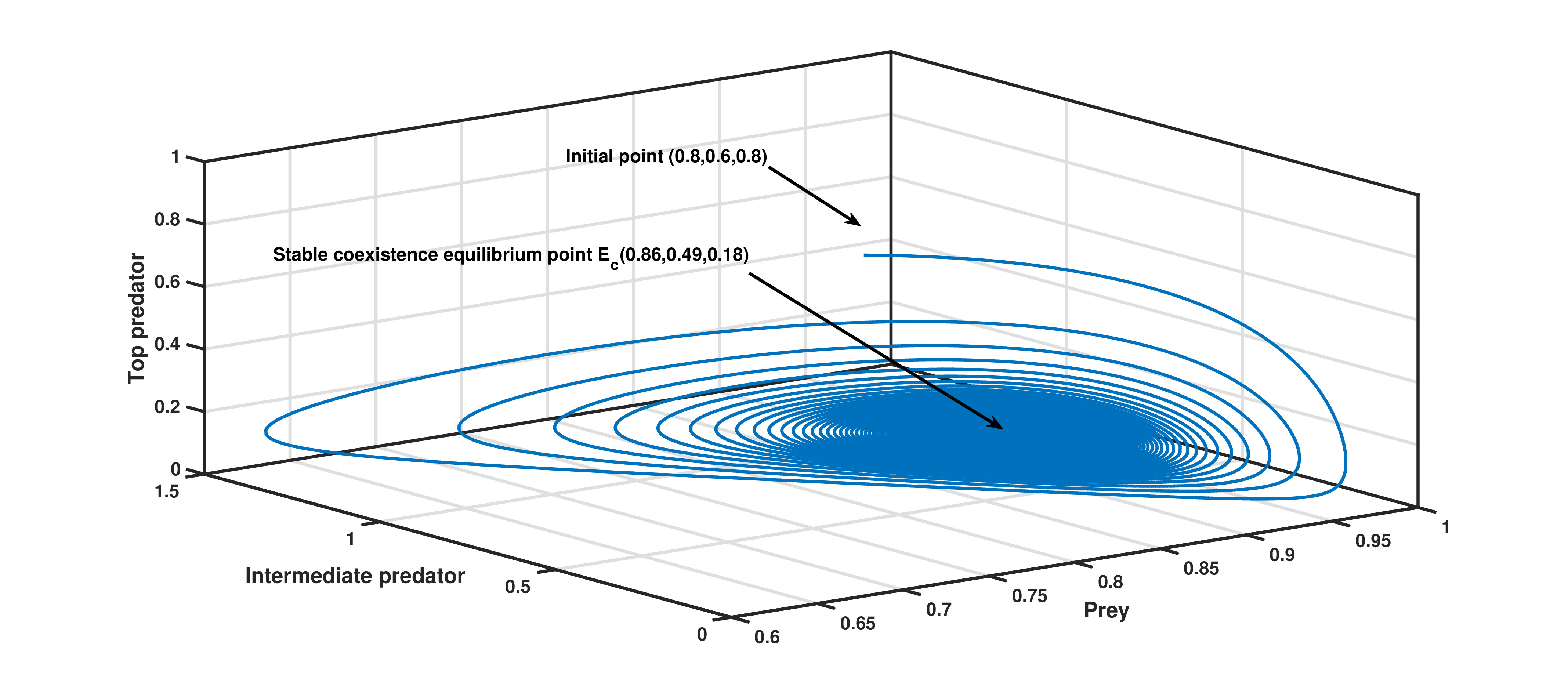}
         \caption{\emph{This diagram depicts the phase that emerges after the occurrence of a Hopf bifurcation at a certain point $q=1.04003$. The figure is generated by using a value of $q=2$ in conjunction with the other specified parameter values.}}
         \label{q2}
     \end{subfigure}
       \caption{\emph{These illustrations depict different scenarios that occur in the system (\ref{Final ode eq}) when the parameter $q$ is changed, while keeping the other parameter values constant. The values of the other parameters are $r_1 = 2$, $r_5 = 1$, $\beta = 0.01$, $r_2 = 1$, $m_1 = 0.5$, $r_4 = 3$, $m_2 = 0.5$, $d_1 = 0.25$, $r_3 = 1$, $d_2 = 0.5$, $b = 1$, $r = 0.01$, and $\alpha=1$.}}
        \label{qdiferent phase portraits}
\end{figure}

\subsection{Effect of harvesting (parameters $q,r$) on the long term dynamics of the system (\ref{Frac eq}) for all $\alpha \in (0,1]$}
%In this part, we investigate the consequences resulting from the act of harvesting in the system (\ref{Final ode eq}).
Initially, we try to elucidate the significance of the catchability constant $q$ in the system (\ref{Final ode eq}). To be able to accomplish this, we have conducted simulations involving several figures that pertain to the significance of the catchability constant $q$. Figure (\ref{qequilibrium curve}) demonstrates that changes in the parameter $q$ can lead to various forms of bifurcations. Figure (\ref{qdiferent phase portraits}) shows phase portraits at different values of the parameter $q$ while keeping the other parameter values unchanged. The fixed parameter values are: $r_1 = 2$, $r_5 = 1$, $\beta = 0.01$, $m_1 = 0.5$, $r_2 = 1$, $m_2 = 0.5$, $r_4 = 3$, $d_1 = 0.25$, $d_2 = 0.5$, $b = 1$, $r_3 = 1$, and $r = 0.01$. From figure (\ref{qequilibrium curve}), it is apparent that changing the parameter $q$ leads to a Hopf bifurcation, which in turn affects the stability of the system (\ref{Final ode eq}). 

\begin{figure}[H]
     \centering
     \begin{subfigure}{0.45\textwidth}
         \centering
         \includegraphics[width=\textwidth]{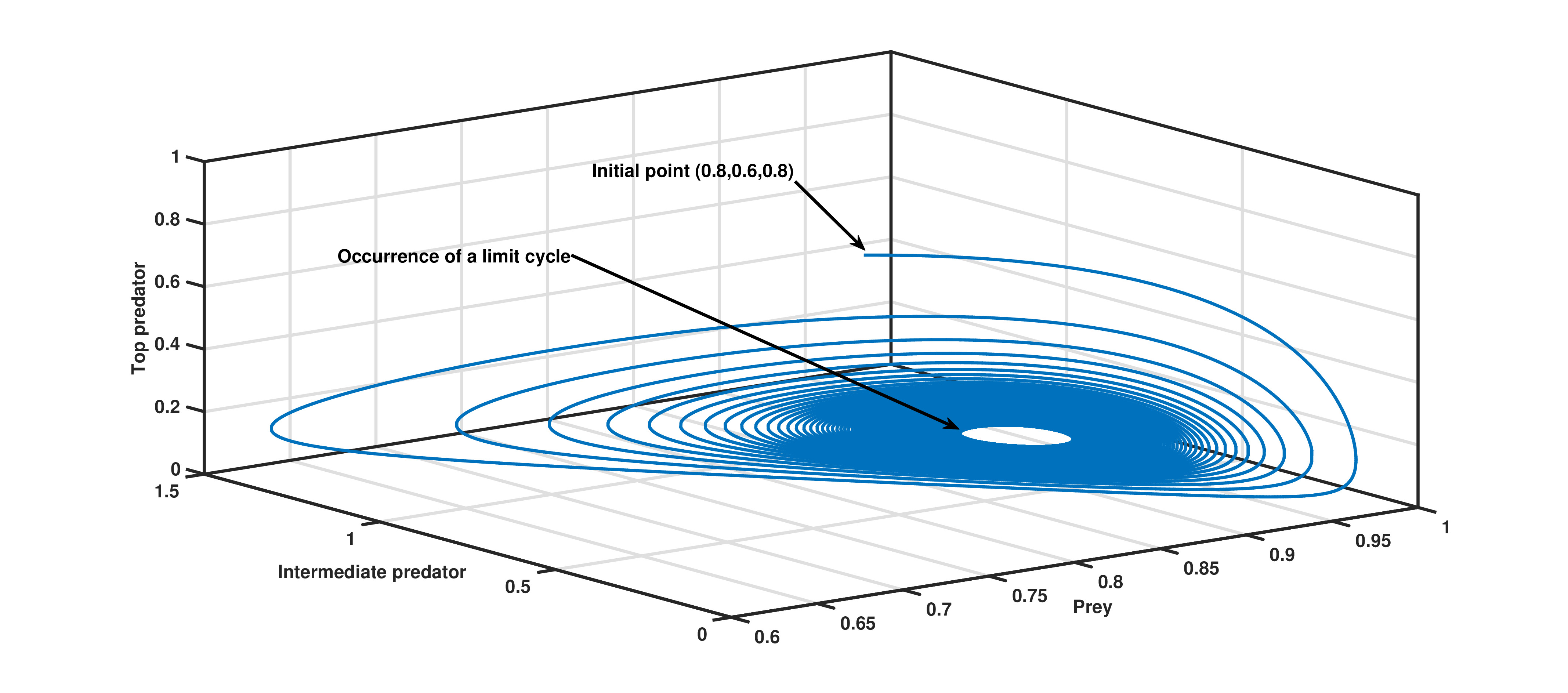}
         \caption{\emph{This diagram illustrates the phase that takes place ahead of the onset of a Hopf bifurcation at a specific value $r=0.020800$. The figure is generated by using a value of $r=0.015$ in conjunction with the other specified parameter values.}}
         \label{r 0.015}
     \end{subfigure}
      \hfill
     \begin{subfigure}{0.45\textwidth}
         \centering
         \includegraphics[width=\textwidth]{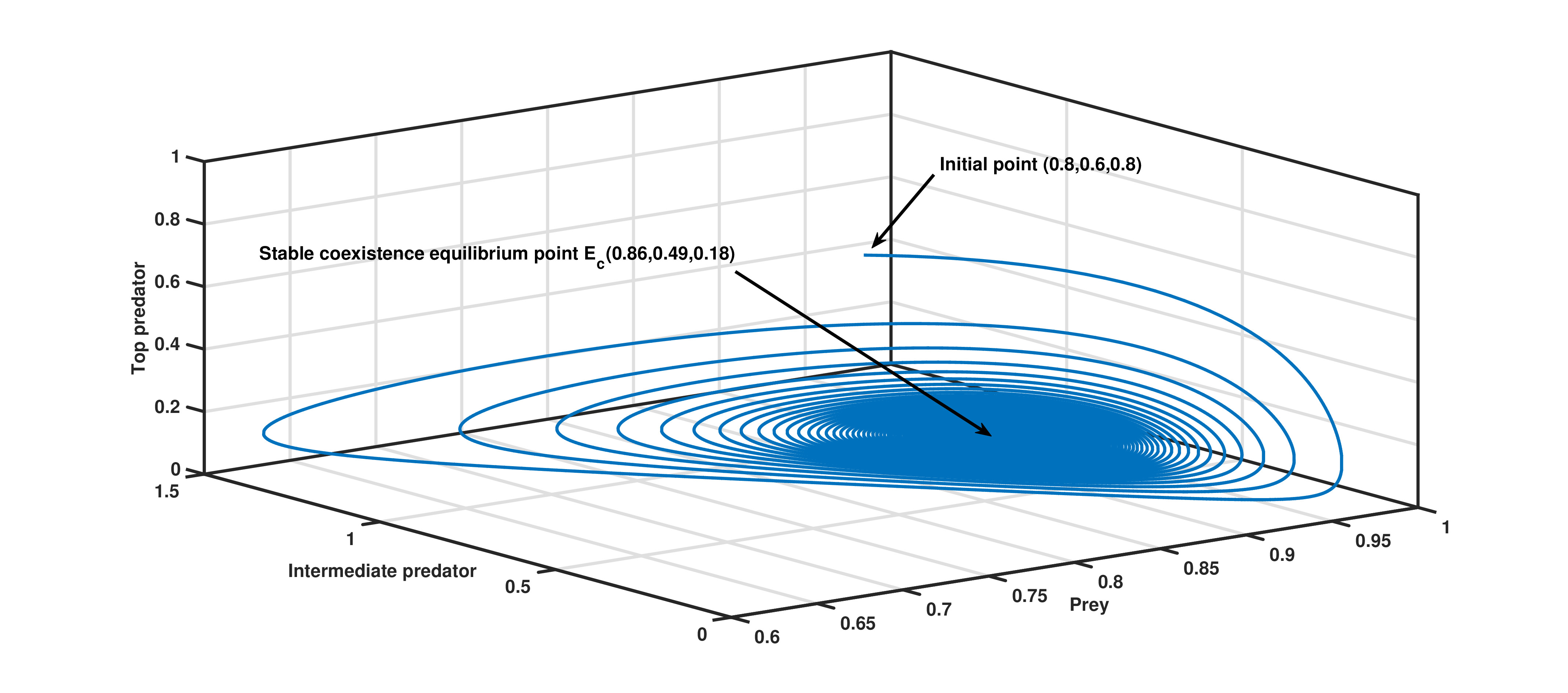}
         \caption{\emph{This diagram illustrates the situation that arises following the occurrence of a Hopf bifurcation at a specific point $r=0.020800$. The figure is generated by using a value of $r=0.04$ along with the other specified parameter values.}}
         \label{r 0.04}
     \end{subfigure}
     \hfill
     \begin{subfigure}{0.45\textwidth}
         \centering
         \includegraphics[width=\textwidth]{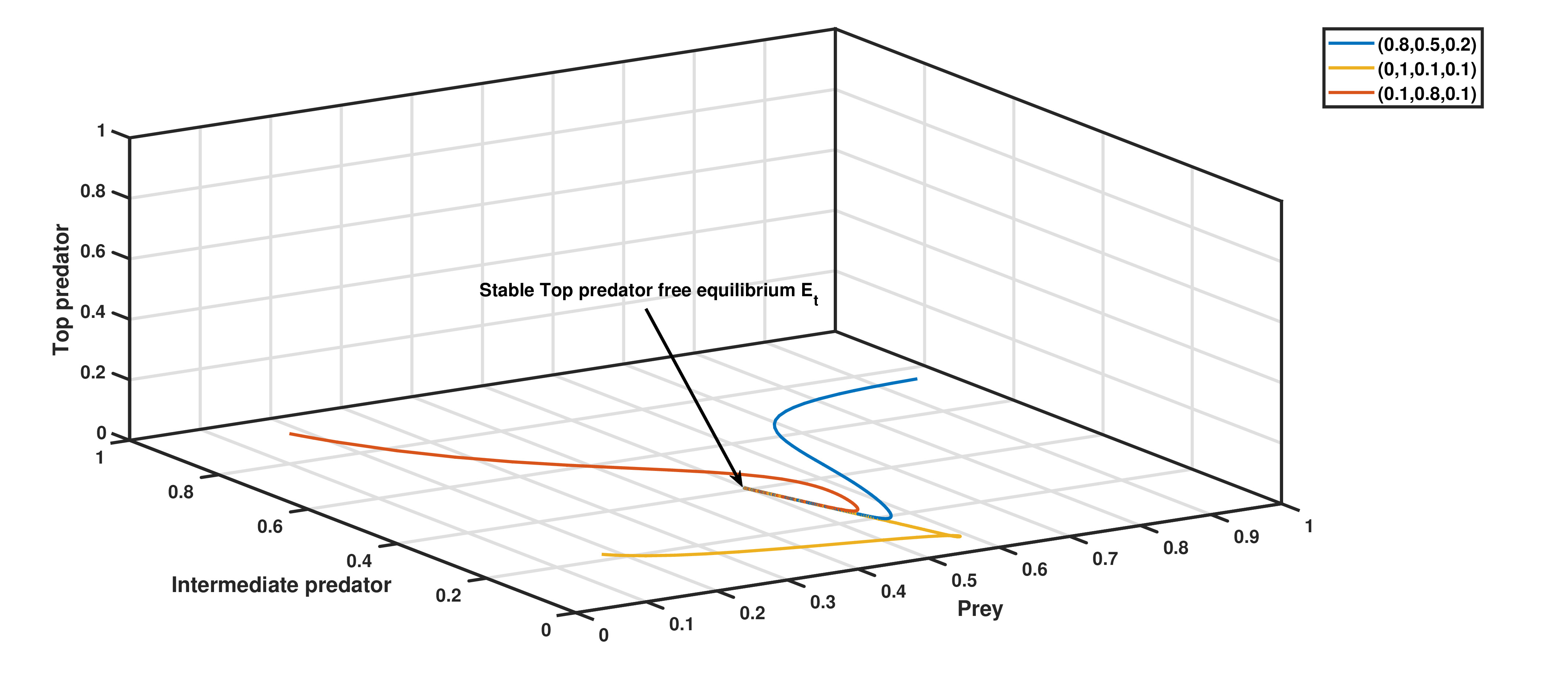}
         \caption{\emph{This diagram depicts the scenario that emerges after a transcritical bifurcation occurs at a particular point $r=1.5074136$. The figure is generated by using a value of $r=1.6$ in conjunction with the other specified parameter values.}}
         \label{r 1.6}
     \end{subfigure}
       \caption{\emph{These illustrations portray various scenarios that arise in the system (\ref{Final ode eq}) when altering the parameter $r$, while maintaining the other parameter values constant. The values of the other parameters are as follows: $r_1 = 2$, $r_5 = 1$, $\beta = 0.01$, $r_2 = 1$, $m_1 = 0.5$, $r_3 = 1$, $m_2 = 0.5$, $d_1 = 0.25$, $d_2 = 0.5$, $r_4 = 3$, $b = 1$, $q =0.5$, and $\alpha=1$.}}
        \label{rdiferent phase portraits}
\end{figure}
For the given parameter values, namely $r_1 = 2$, $r_5 = 1$, $\beta = 0.01$, $m_1 = 0.5$, $m_2 = 0.5$, $r_2 = 1$, $d_1 = 0.25$, $r_4 = 3$, $d_2 = 0.5$, $b = 1$, $q =0.3$, $r_3 = 1$, and $r = 0.01$, when the value of $q$ is less than a certain threshold $q=1.04003=q^{hbp}$, the system described by equation (\ref{Final ode eq}) exhibits instability around the point $E_c$. We obtain the values: $N_1=1.68>0$, $N_2=0.064>0$, $N_3=0.11>0$, and $N_1 N_2-N_3=-0.0018<0$, confirming the instability of $E_c$. Figure (\ref{q0.3}) depicts the precise scenario. More precisely, the system (\ref{Final ode eq}) exhibits fluctuations in the populations of the three species. As a result of a Hopf bifurcation at $q=1.04003=q^{hbp}$, the stability of the system (\ref{Final ode eq}) changes and becomes stable for all parameter values $q>q^{hbp}$, while keeping all other parameter values unchanged. 
%When the value of $q$ is set to $1.04003=q^{hbp}$, keeping all other parameters unchanged, we observe that $N_1=1.6>0$, $N_2=0.06>0$, $N_3=0.1>0$, $N_1N_2-N_3=0$, and $N_1(q^{hbp}) N_2^{'}(q^{hbp}) +N_2(q^{hbp}) N_1^{'}(q^{hbp}) -N_3^{'}(q^{hbp})= 0.002 \neq 0$. These findings confirm the presence of a Hopf bifurcation. 
It can be seen that the system (\ref{Final ode eq}) exhibits stable characteristics for the parameter values: $r_1 = 2$, $r_5 = 1$, $\beta = 0.01$, $m_1 = 0.5$, $m_2 = 0.5$, $d_1 = 0.25$, $r_2 = 1$, $d_2 = 0.5$, $r_3 = 1$, $b = 1$, $q =2$, $r_4 = 3$, and $r = 0.01$, as depicted in figure (\ref{q2}). Based on the given parameter values, it can be observed that $N_1=1.66>0$, $N_2=0.06>0$, $N_3=0.1>0$, and $N_1 N_2-N_3=0.002>0$. These results provide evidence for the stability of $E_c$. This discussion provides us with compelling evidence that highlights the significance of the parameter $q$ throughout the system (\ref{Final ode eq}). From the preceding discussions, it is evident that when the catchability constant $q$ is extremely low, meaning that the likelihood of capturing prey through harvesting efforts is small, the stable coexistence of all three populations becomes challenging. This is due to fluctuations in the populations of all three species within the system (\ref{Final ode eq}). This implies that coexistence is feasible in this scenario, albeit the population continues to fluctuate. As the likelihood of capturing prey, represented by the catchability constant $q$, grows, a stable coexistence of all three species becomes possible due to the emergence of a Hopf bifurcation. This indicates that for the three species in the system to coexist, a high catchability constant $q$ is required. Although a higher catchability constant $q$ can lead to a reduction in the prey population over time in the long-term dynamics. Thus, the catchability constant $q$ plays a crucial role within the system.

Moreover, it has been observed that in the system (\ref{Frac eq}) for  $0<\alpha<1$, the stability of the coexistence equilibrium point $E_{c}$ remains unchanged with changes in the parameter $q$. When the value of the parameter $q$ falls in the range $[0,q^{hbp})$, the system demonstrates behavior in contradiction with that of the system represented by equation (\ref{Final ode eq}), whereas for values $q>q^{hbp}$, the system's behavior aligns with that described by equation (\ref{Final ode eq}).

Now, we will shift our attention to the significance of the parameter $r$ in the system (\ref{Final ode eq}). In order to gain a deeper comprehension of the influence of this parameter on the system, we conduct simulations using various values of the parameter $r$. In order to produce the figure ({\ref{requilibrium curve}}), we need to change the parameter $r$ and employ the following parameter values: $r_1 = 2$, $r_5 = 1$, $\beta = 0.01$, $m_1 = 0.5$, $r_2 = 1$, $m_2 = 0.5$, $r_3 = 1$, $d_1 = 0.25$, $d_2 = 0.5$, $b = 1$, $r_4 = 3$, $q =0.5$. This figure elegantly depicts the manifestation of several bifurcations as the parameter $r$ undergoes modifications. In order to obtain figure ({\ref{r 0.015}}), we utilise the following parameter values: $r_1 = 2$, $r_5 = 1$, $\beta = 0.01$, $m_1 = 0.5$, $m_2 = 0.5$, $d_1 = 0.25$, $r_2 = 1$, $r_3 = 1$, $d_2 = 0.5$, $b = 1$, $q =0.5$, $r_4 = 3$, and $r=0.015$. Alternatively, in order to simulate figure ({\ref{r 0.04}}), we employ the following parameter values: $r_1 = 2$, $r_5 = 1$, $\beta = 0.01$, $m_1 = 0.5$, $m_2 = 0.5$, $d_1 = 0.25$, $r_2 = 1$, $r_3 = 1$, $d_2 = 0.5$, $b = 1$, $q =0.5$, $r_4 = 3$, and $r=0.04$. Based on the information provided in figure ({\ref{requilibrium curve}}), it can be inferred that if the parameter $r$ is smaller than $r=0.0208005=r^{hbp}$, the system (\ref{Final ode eq}) exhibits instability in the vicinity of $E_c$. When the parameter values are set to $r=0.015$, while keeping all other parameter values unchanged, we find that $N_1=1.67>0$, $N_2=0.064>0$, $N_3=0.10>0$, and $N_1 N_2-N_3=-0.0007<0$. This confirms the instability of $E_c$. This is accurately illustrated in figure ({\ref{r 0.015}}). A Hopf bifurcation occurs at the value of $r$ equal to $r=0.0208005=r^{hbp}$. This is evident as all the criteria specified in theorem (\ref{odehopfthm}) are met for these parameter values. 
%More precisely, we have $N_1=1.6>0$, $N_2=0.06>0$, $N_3=0.1>0$, $N_1N_2-N_3=0$, and $N_1(q^{hbp}) N_2^{'}(q^{hbp}) +N_2(q^{hbp}) N_1^{'}(q^{hbp}) -N_3^{'}(q^{hbp})=0.125 \neq 0$.
The system (\ref{Final ode eq}) exhibits stability around $E_c$ for all values of the parameter $r^{tbp}>r>r^{hbp}$. It can be established numerically through the use of the following parameter values: $r_1 = 2$, $r_5 = 1$, $\beta = 0.01$, $m_1 = 0.5$, $m_2 = 0.5$, $r_2 = 1$, $r_3 = 1$, $d_1 = 0.25$, $d_2 = 0.5$, $b = 1$, $q = 0.5$, $r_4 = 3$, and $r=0.04$. At these parameter values, $N_1=1.6>0$, $N_2=0.06>0$, $N_3=0.1>0$, and $N_1 N_2-N_3=0.002>0$. This substantiates the stability of $E_c$ in accordance with theorem (\ref{lsco}). This is accurately demonstrated in figure ({\ref{r 0.04}}). A transcritical bifurcation occurs when the value of $r$ reaches $r=1.5074136=r^{tbp}$, causing a shift in the stability of the system (\ref{Final ode eq}) and making it unstable. In order to have a better understanding of this exact situation, we use parameter values: $r_1 = 2$, $r_2 = 1$, $\beta = 0.01$, $m_1 = 0.5$, $m_2 = 0.5$, $d_1 = 0.25$, $r_3 = 1$, $r_4 = 3$, $d_2 = 0.5$, $r_5 = 1$, $b = 1$, $q = 0.5$, and $r=1.6$. At these parameter values, we notice that $N_3$ is equal to -0.01, which is less than 0. This demonstrates the instability of $E_c$ as per theorem (\ref{lsco}). Furthermore, for these specific parameter values, all the conditions outlined in theorem (\ref{lsto}) are met, indicating the stability of $E_t$. This is precisely shown in figure (\ref{r 1.6}). The level of harvesting effort plays a crucial role in predator-prey systems, as it has far-reaching effects on both the target species and the overall ecosystem. Efficiently managing the harvesting effort is crucial for maintaining the population's long-term sustainability. The preceding calculations show that as the amount of harvesting effort increases, the prey population decreases, which is consistent with biological principles. In addition, it has been observed that when the level of harvesting effort is low, the populations of all three species exhibit fluctuations. When the level of harvesting effort is increased, the system becomes stable, allowing for the coexistence of all three populations. This is the result of the manifestation of a Hopf bifurcation. Interestingly, when the level of harvesting effort is further increased, the coexistence of all species becomes unattainable, and the top predator becomes extinct.

Furthermore, when \(0<\alpha<1\) in the system (\ref{Frac eq}), it is seen that for all values of the parameter \(r\) within the interval \([0,r^{hbp})\), the system exhibits stable behavior at the coexistence equilibrium point, in stark contrast to the fluctuating behavior of the system when \(\alpha=1\).  For values \( r > r^{hbp} \), the system's behavior for \( 0 < \alpha < 1 \) corresponds with that delineated by equation (\ref{Final ode eq}).

\subsection{Effect of prey odour (parameter $\beta$) on the populations of the system (\ref{Frac eq}) for all $\alpha \in (0,1]$}
Within this part, we explore the impact of prey odour on the system (\ref{Frac eq}). The parameter $\beta$ in this system reflects the coefficient that quantifies the impact of prey odour. Figure (\ref{betaequilibrium curve}) vividly demonstrates the significance of the parameter $\beta$ within the system (\ref{Frac eq}) for $\alpha=1$, i.e., system (\ref{Final ode eq}). Figure (\ref{betaequilibrium curve}) is obtained using the following parameter values: $r_1 = 2$, $r_5 = 1$, $\beta = 0.5$, $r_2 = 1$, $r_3 = 1$, $m_1 = 0.5$, $m_2 = 0.5$, $d_1 = 0.25$, $r_4 = 3$, $d_2 = 0.5$, $b = 1$, $q = 0.5$, and $r=0.01$. Based on figure (\ref{betaequilibrium curve}), it is apparent that the parameter $\beta$ significantly contributes to the stability of the system (\ref{Final ode eq}). Fluctuations in the populations of the three species within the system can be detected when the parameter value $\beta<0.020337=\beta^{hbp}$. As an example, we will examine the parameter values: $r_1 = 2$, $r_5 = 1$, $\beta = 0.015<\beta^{hbp}$, $m_1 = 0.5$, $m_2 = 0.5$, $d_1 = 0.25$, $d_2 = 0.5$, $r_2 = 1$, $r_3 = 1$, $b = 1$, $q = 0.5$, $r_4 = 3$, and $r=0.01$. Given this parameter configuration, we get the values $N_1=1.6>0$, $N_2=0.06>0$, $N_3=0.1>0$, and $N_1 N_2-N_3=-0.0007<0$. Therefore, this verifies the instability of $E_{c}$. This is readily apparent in figure (\ref{b2 0.015}). A Hopf bifurcation occurs at $\beta=0.020337=\beta^{hbp}$, leading to the stabilisation of the system.
Given the parameter values: $r_1 = 2$, $r_5 = 1$, $\beta =\beta^{hbp}$, $m_1 = 0.5$, $m_2 = 0.5$, $r_2 = 1$, $r_3 = 1$, $d_1 = 0.25$, $d_2 = 0.5$, $b = 1$, $r_4 = 3$, $q = 0.5$, and $r=0.01$, we find that 
%$N_1=1.6>0$, $N_2=0.06>0$, $N_3=0.1>0$, $N_1N_2-N_3=0$, and $N_1(q^{hbp}) N_2^{'}(q^{hbp}) +N_2(q^{hbp}) N_1^{'}(q^{hbp}) -N_3^{'}(q^{hbp})=0.133 \neq 0$. Therefore, 
all the requirements outlined in theorem (\ref{odehopfthm}) for the existence of Hopf bifurcation are met, resulting in a Hopf bifurcation at $\beta =\beta^{hbp}$.
\begin{figure}[H]
     \centering
     \begin{subfigure}{0.45\textwidth}
         \centering
         \includegraphics[width=\textwidth]{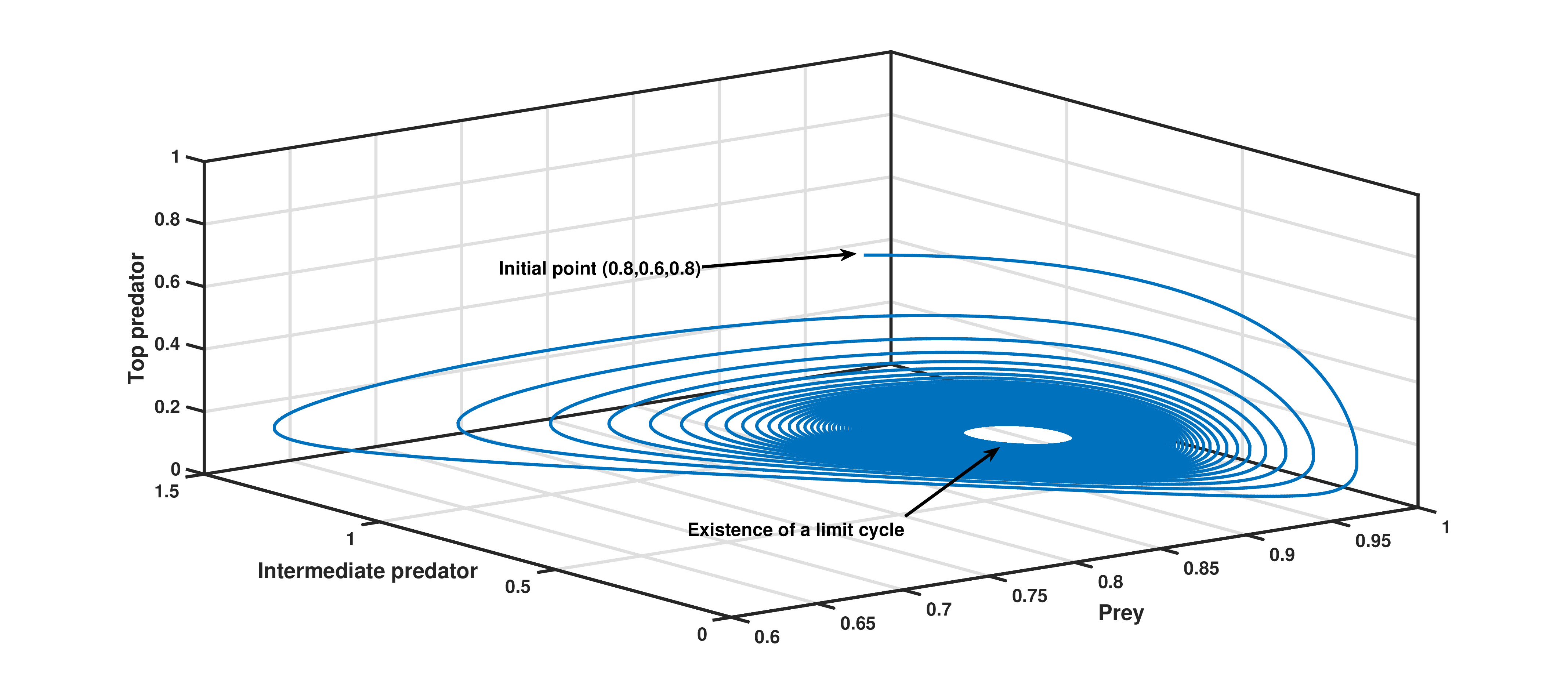}
         \caption{\emph{This figure depicts the phase preceding the commencement of a Hopf bifurcation at a specific value $\beta=0.020337$. The figure is generated using a value of $\beta=0.015$ together with the other stated parameter values}}
         \label{b2 0.015}
     \end{subfigure}
      \hfill
     \begin{subfigure}{0.45\textwidth}
         \centering
         \includegraphics[width=\textwidth]{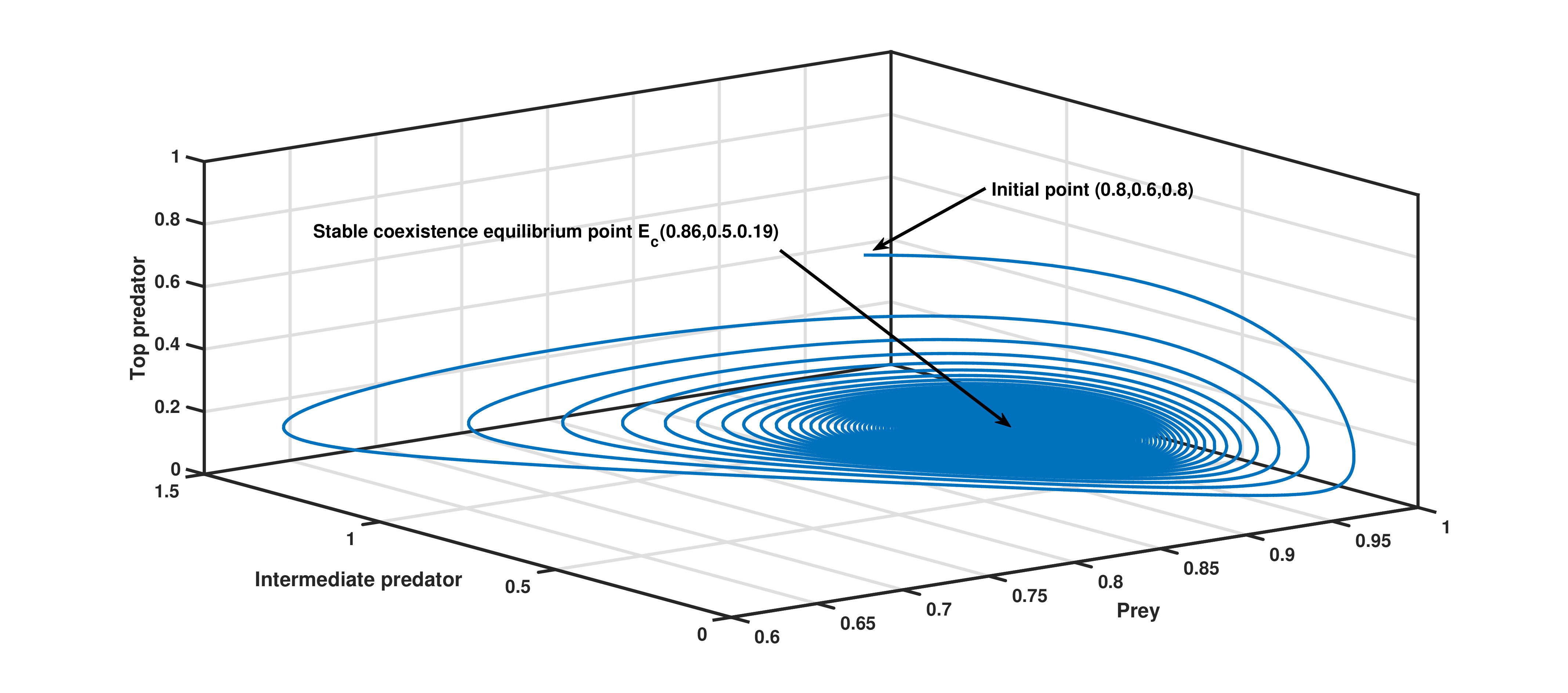}
         \caption{\emph{This diagram depicts the phase that occurs following the inception of a Hopf bifurcation at a precise value of $\beta=0.020337$. The figure is generated by using a value of $\beta=0.04$ along with the other specified parameter values.}}
         \label{b2 0.04}
     \end{subfigure}
       \caption{\emph{These illustrations depict different scenarios that occur while changing the parameter $\beta$, while keeping the other parameter values constant. In the given setting, the values of the other parameters are as follows: $r_1 = 2$, $r_4 = 3$, $m_1 = 0.5$, $m_2 = 0.5$, $d_1 = 0.25$, $r_2 = 1$, $r_3 = 1$, $d_2 = 0.5$, $b = 1$, $q = 0.5$, $r_5 = 1$, $r = 0.015$, and $\alpha=1$.}}
        \label{}
\end{figure}
When the parameter value $\beta$ is set to $\beta=0.04>\beta^{hbp}$, and all other parameter values remaining unchanged, we have the following results: $N_1=1.6$, $N_2=0.07$, $N_3=0.1$, and $N_1 N_2-N_3=0.002$. All of these values are greater than zero. Therefore, based on theorem (\ref{lsco}), the system (\ref{Final ode eq}) displays stability near $E_{c}$ given the specified parameter values. This specific situation is depicted in figure (\ref{b2 0.04}). Based on the previous discussion, it readily apparent that when the coefficient of prey odour effect increases, the intermediate predator becomes more attracted to the prey due to the odour released by the prey. As a result, the population of prey species decreases due to an increase in the encounter rate between the intermediate predator and prey, leading to a higher predation rate. This is precisely what happens within the system (\ref{Final ode eq}) as evident from figure (\ref{betaequilibrium curve}). Interestingly, it has been noted that a minimal amount of prey odour effect renders the stable coexistence of all three species within the system unattainable, resulting in oscillations in the populations of the three species. However, when the intensity of the prey odour is significantly enhanced, a Hopf bifurcation occurs, resulting in the long-term coexistence of all three populations within the system (\ref{Final ode eq}). Therefore, it has been discovered that the influence of prey odour aids in fostering a state of lasting cohabitation within this particular system.

Additionally, when \(0<\alpha<1\) in the system (\ref{Frac eq}), it is observed that for all values of the parameter \(\beta\) within the interval \([0,\beta^{hbp})\), the system demonstrates stable behavior near the coexistence equilibrium point, in sharp contrast to the fluctuating behavior of the system when \(\alpha=1\). For values \( \beta > \beta^{hbp} \), the system's behavior for \( 0 < \alpha < 1 \) resembles with that of the system given by the equation (\ref{Final ode eq}).

\section{Conclusion} \label{con} Our study delves into a model that investigates the intricate dynamics of a odour-mediated three species food chain. This model incorporates a commonly observed phenomenon, notably the influence of odour. It incorporates predator odour-induced prey refuge and prey harvesting. The model equations are formulated in two different types: one employing ordinary differential equations and the other utilising Caputo fractional-order differential equations to incorporate the memory effect. The paper explores the fundamental prerequisites, such as the existence, non-negativity, and uniqueness of the solutions of the system. The model's biologically plausible equilibrium states are established. All ecologically feasible equilibrium points are analysed in terms of their local stability. The article extensively covers the phenomenon of bifurcation, with a particular focus on the occurrence of Hopf bifurcations. An analysis has been conducted to compare the two systems, ODE (\ref{Final ode eq}) and FDE (\ref{Frac eq}). We perform numerical simulations with biologically attainable parameters to illustrate the behaviour of the system close to the equilibrium points. It is found that our system can attain a maximum four equilibrium states: the vanishing equilibrium point $E_{v}$, the axial equilibrium point $E_{a}$, the top predator free equilibrium point $E_{t}$, and the coexisting equilibrium point $E_{c}$. From an ecological standpoint, the vanishing equilibrium point $E_{v}$ represents the state in which all three species in the bio-system become extinct and the system breaks down. The axial equilibrium point $E_{a}$ represents the situation in which the two predators vanish, leaving only the prey species. The top predator free equilibrium point $E_{t}$ indicates the state in which only the top predator becomes extinct. The coexisting equilibrium point $E_{c}$ reflects the scenario in which all three species coexist in the system. Based on our research findings, it can be inferred that if the rate of harvesting exceeds the growth rate of the prey species, the system will ultimately reach the vanishing equilibrium point, resulting in the extinction of all three species and the collapse of the system. The local stability of the vanishing equilibrium point $E_{v}$ precludes the possibility of the existence of the axial equilibrium point $E_{a}$ and the top predator free equilibrium point $E_{t}$. Ecologically, reaching the vanishing equilibrium point refers to the instant at which all species in our system become extinct. At this point, there is no possibility of any growth in the species unless we reintroduce them, which is biologically true. The local stability of both the axial and top predator free equilibrium points negates the local stability of the vanishing equilibrium scores. From a biological perspective, this means that when the bio-system reaches a state where either just the prey survives in the system or the top predator becomes extinct from the system, the system has no way to collapse. In other words, all three species cannot become extinct simultaneously, as evidenced from our model. Upon satisfying certain parametric conditions, our discussed bio-system can achieve the coexistence equilibrium point. From an ecological point of view, it is evident that the coexistence of all three species is feasible within this system. 

Existing literature has already established that when prey species detect the presence of a nearby predator through olfactory cues, they become vigilant and employ various anti-predator strategies to minimise encounters. One effective strategy is for prey species to seek refuge in order to prevent predators from reaching them. In this study, we examine the consequences of refuge behaviour exhibited by prey and intermediate predators in response to their respective predators' odour. The prey's refuge behaviour towards the intermediate predator has been discovered to have a significant impact on the system. This parameter has the potential to trigger two Hopf bifurcations in the system, resulting in oscillations in the populations of the three species and giving birth to the intriguing bubbling phenomenon. Moreover, this parameter has the power to trigger a transcritical bifurcation in the system, altering the stability of the equilibrium points. Remarkably, it has been seen that when prey species seek refuge at a high rate, it can lead to the extinction of predators, making coexistence impossible. Moreover, the influence of the intermediate predator's odour on the dynamics of the system is established. This impact is readily apparent, as the parameter related to the intermediate predator's odour can trigger numerous transcritical bifurcations in the system. It has been discovered that it has a substantial influence on facilitating the coexistence of all three species in the system. Similarly, the refuge behaviour of the intermediate predator against the top predator also plays a crucial role in the long-term dynamics of the system. This parameter can additionally trigger oscillations in the system by inducing a Hopf bifurcation. Interestingly, an increase in this parameter may contribute to an overall reduction in the size of the prey population. Thus, the act of seeking refuge performed by individuals to avoid being preyed upon can have a significant impact on the long-term dynamics of the system. Additionally, it has been observed that the parameter associated with the odour of the top predator can trigger both a Hopf bifurcation and a transcritical bifurcation in the system. When the concentration of the top predator's odour is insignificant, no refuge is observed among the population of intermediate predators. However, even under such a scenario, it is still conceivable for the three species to coexist. Though fluctuations in the populations of all three species have been noted in this scenario. The study reveals that prey harvesting parameters have the potential to induce both Hopf bifurcation and transcritical bifurcations in the system, thus highlighting the significance of these parameters in the system. When the rate of successfully capturing prey through harvesting activities is low, it becomes challenging for all three species to maintain a stable coexistence. Furthermore, it is evident that for the three species in the system to coexist, a substantial catchability constant $q$ is required, despite the fact that a greater catchability constant $q$ can lead to a reduction in the prey population in the long-term dynamics. Additionally, it has been noted that an increase in the level of harvesting effort leads to a drop in the prey population, aligning with established biological principles. Surprisingly, when the intensity of the harvesting effort reaches an exceedingly high value, it becomes impossible for all species to cohabit, and the top predator faces the risk of extinction.
It has been found that the level of attraction of the intermediate predator to the prey, influenced by the prey's odour, is a critical factor in the coexistence of the three species in the system. It is intriguing that the absence of the prey odour effect in the system poses a challenge for all three species to live together, leading to fluctuations in their populations.  However, with a significant increase in the strength of the prey odour's effect, a Hopf bifurcation arises, leading to the sustained coexistence of all three populations in the system. In addition, as the impact of prey odour becomes stronger, the intermediate predator becomes more drawn to the prey, leading to a decrease in the population of the prey species due to an increase in the rate of encounters between the intermediate predator and the prey. This ultimately results in an elevated predation rate. The effect of memory on the system is also studied using a Caputo-type fractional-order derivative of order $\alpha$ into the modelling of the system. It is observed that as the order of the fractional derivative decreases, the system under consideration becomes more and more stable. From an ecological perspective, it is possible to infer that in this system, as an individual's memory retention ability diminishes, they become unable to recall their previous experiences pertaining to their early life history. As a result, their consciousness of their immediate surroundings drops, leading to the instabilities and fluctuating cohabitation of the three species within the system. Therefore, it can be concluded that the memory of the individuals in this system is crucial for promoting the stability of the cohabitation of the three species within the system. Furthermore, this model can be broadened to include the relationship between a species' odour and its rate of harvesting. In this study, we have found, through a literature survey, a substantial body of ecological studies indicating the correlation between the odour of a species and its harvesting rate, although this aspect is not addressed in this document. This opens avenues for further investigation.\\\\
\textbf{Data Availability Statement} No data of any source has been used in this study.

\end{document}